\DeclareMathAlphabet{\pazocal}{OMS}{zplm}{m}{n}
\newtheorem{theorem}{Theorem}[section]
\newtheorem{lemma}[theorem]{Lemma}
\newtheorem{proposition}[theorem]{Proposition}
\newtheorem{corollary}[theorem]{Corollary}
\newtheorem{main}{Theorem}
\theoremstyle{definition}
\newtheorem{definition}[theorem]{Definition}
\theoremstyle{remark}
\newtheorem{remark}[theorem]{Remark}
\numberwithin{equation}{section}
\newcommand{\R}{\ensuremath{\mathbb{R}}}
\newcommand{\N}{\ensuremath{\mathbb{N}}}
\newcommand{\set}[1]{\left\{#1\right\}}
\newcommand{\la}{\lambda}
\newcommand{\ga}{\gamma}
\newcommand{\ep}{\varepsilon}
\newcommand{\f}{\infty}
\newcommand{\al}{\alpha}
\newcommand{\lle}{\preccurlyeq}
\newcommand{\lge}{\succcurlyeq}
\newcommand{\si}{\sigma}
\begin{document}

	\title[Pointwise densities and critical values]{Pointwise densities of homogeneous Cantor measure and critical values}
	
	\author{Derong Kong}
	\address[D. Kong]{College of Mathematics and Statistics, Chongqing University,  401331, Chongqing, P.R.China}
	\email{derongkong@126.com}
	
	\author{Wenxia Li}
\address[W. Li]{Wenxia Li: Department of Mathematics, Shanghai Key Laboratory of PMMP, East China Normal University, Shanghai 200062,
People's Republic of China}
\email{wxli@math.ecnu.edu.cn}
	
	\author{Yuanyuan~Yao}
\address[Y. Yao]{Department of Mathematics, East China University of Science and Technology, Shanghai 200237, P.R. China}
\email{yaoyuanyuan@ecust.edu.cn}


	\subjclass[2010]{Primary: 28A78, Secondary: 28A80, 68R15, 37B10}

	\begin{abstract}
		Let $N\ge 2$ and $\rho\in(0,1/N^2]$. The homogenous Cantor set $E$ is the self-similar set generated by the iterated function system
	\[
		\set{f_i(x)=\rho x+\frac{i(1-\rho)}{N-1}: i=0,1,\ldots, N-1}.
		\]
 Let  $s=\dim_H E$ be the Hausdorff dimension of $E$, and let $\mu=\mathcal H^s|_E$ be the $s$-dimensional Hausdorff measure restricted to $E$.
		 In this paper  we describe, for each $x\in E$, the pointwise lower $s$-density $\Theta_*^s(\mu,x)$ and
 upper $s$-density $\Theta^{*s}(\mu, x)$ of $\mu$ at  $x$. This extends some early results of Feng et al. (2000). Furthermore, we determine two critical values $a_c$ and $b_c$ for the  sets
		\[
	E_*(a)=\set{x\in E: \Theta_*^s(\mu, x)\ge a}\quad\textrm{and}\quad E^*(b)=\set{x\in E: \Theta^{*s}(\mu, x)\le b}
		\]
	respectively, such that $\dim_H E_*(a)>0$ if and only if $a<a_c$, and that $\dim_H E^*(b)>0$ if and only if $b>b_c$. We emphasize that both values $a_c$ and $b_c$ are related to the Thue-Morse type sequences, and our strategy to find them relies on   ideas from open dynamics and techniques from combinatorics on words.
		
	\end{abstract}

	\keywords{homogeneous Cantor set; self-similar measure;  pointwise density; critical value; Thue-Morse sequence.}
	\maketitle
	
	\section{Introduction}\label{s1}
	Let $N\ge 2$ be an integer, and let $\rho\in(0, 1/N)$. The homogeneous Cantor set $E=E_{N,\rho}$ is the self-similar set generated by the \emph{iterated function system} (IFS)
	\[
	f_i(x)=\rho x+i\; R:=\rho x+i\frac{1-\rho}{N-1},\quad i=0,1,\ldots, N-1.
	\]
	When $N=2$ and $\rho=1/3$, $E_{2, 1/3}$ is the classical middle-third Cantor set. It is easy to see that the convex hull of $E$ is the unit interval $[0, 1]$, and the first level basic intervals $f_{0}([0,1]), f_1([0, 1]), \ldots$ and  $f_{N-1}([0, 1])$ are located one by one from the left to the right (see Figure \ref{fig:1}). These subintervals $f_i([0, 1]), i=0,1,\ldots, N-1$, are pairwise disjoint, and the gaps between any two neighboring subintervals are the same.
	
	 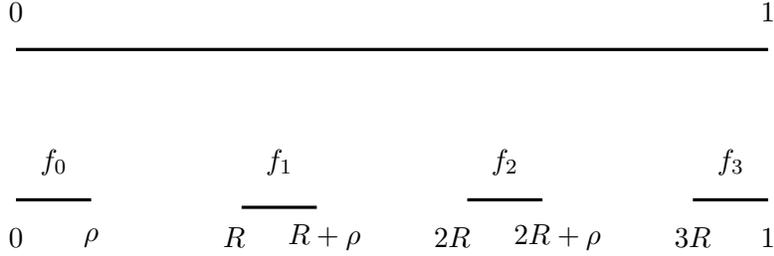
\begin{figure}[h!]
\begin{center}
\begin{tikzpicture}[
    scale=10,
    axis/.style={very thick},
    important line/.style={thick},
    dashed line/.style={dashed, thin},
    pile/.style={thick, ->, >=stealth', shorten <=2pt, shorten
    >=2pt},
    every node/.style={color=black}
    ]
    \draw[axis] (0,0)  -- (1,0) node(xline)[right]{};

      \node[] at (0, 0.05){$0$};

      \node[] at (1, 0.05){$1$};

      \draw[axis] (0,-0.2)  -- (0.1,-0.2) node(xline)[right]{};
 \node[] at (0.05, -0.15){$f_0$};
 \node[] at (0, -0.25){$0$};
 \node[] at (0.1, -0.25){$\rho$};

     \draw[axis] (0.3,-0.21)  -- (0.4,-0.21) node(xline)[right]{};
 \node[] at (0.35, -0.15){$f_1$};
  \node[] at (0.29, -0.25){$R$};
   \node[] at (0.41, -0.25){$R+\rho$};

    \draw[axis] (0.6,-0.2)  -- (0.7,-0.2) node(xline)[right]{};
 \node[] at (0.65, -0.15){$f_{2}$};
  \node[] at (0.58, -0.25){$2R$};
   \node[] at (0.72, -0.25){$2R+\rho$};

        \draw[axis] (0.9,-0.2)  -- (1,-0.2) node(xline)[right]{};
 \node[] at (0.95, -0.15){$f_{3}$};
  \node[] at (0.9, -0.25){$3R$};
   \node[] at (1, -0.25){$1$};

\end{tikzpicture}
\end{center}
\caption{The convex hull $[0, 1]$ of $E$ and the first level basic intervals $f_i([0, 1]), i=0,1,2,3$ with $N=4$ and $\rho=1/10$. Then $R=(1-\rho)/(N-1)=3/10$.}\label{fig:1}
\end{figure}

	Note that for each $x\in E$ there exists a (unique) sequence $(d_i)=d_1d_2\ldots\in\set{0, 1,\ldots, N-1}^\N$ such that
	\begin{equation}\label{eq:proj-map}
	x=\lim_{n\to\f}f_{d_1\ldots d_n}(0)=R\sum_{i=1}^\f d_i\rho^{i-1}=:\pi((d_i)),
	\end{equation}
	where $f_{d_1\ldots d_n}:=f_{d_1}\circ\cdots\circ f_{d_n}$ is the composition of maps. The infinite sequence $(d_i)$ is called a \emph{coding} of $x$. Since $0<\rho< 1/N$, the self-similar IFS $\set{f_i}_{i=0}^{N-1}$ satisfies the strong separation condition (cf.~\cite{Falconer_1990}). So the projection map $\pi: \set{0,1,\ldots, N-1}^\N\to E$ defined in (\ref{eq:proj-map}) is bijective. In other words, each $x\in E$ has a unique coding.
	
	Accordingly, the self-similar set $E$ supports  a unique  measure $\mu=\mu_{N,\rho}$   satisfying
	\begin{equation}\label{eq:mu}
	\mu=\sum_{i=0}^{N-1}\frac{1}{N}\mu\circ f_i^{-1}.
	\end{equation}
The measure $\mu$ is called a \emph{homogeneous Cantor measure}, which is a self-similar measure (cf.~\cite{Hutchinson_1981}).
	In fact, the measure $\mu$ is   the image  measure of the uniform Bernoulli measure on the symbolic space $\set{0,1,\ldots, N-1}^\N$ under the projection map $\pi$. It  is also the $s$-dimensional Hausdorff measure $\mathcal H^s$ restricted to $E$, i.e., $\mu=\mathcal H^s|_E$, where $s=s_{N,\rho}:=-\log N/\log\rho$ is the Hausdorff dimension of $E$. For brevity, we always write $E, \mu$ and $s$ instead of $E_{N,\rho}, \mu_{N,\rho}$ and $s_{N, \rho}$  if no confusion arises.
	
Given  $x\in E$, the \emph{lower and upper $s$-densities} of $\mu$ at $x$ are defined by
	\[
	\Theta_*^s(\mu, x):=\liminf_{r\to 0}\frac{\mu(B(x, r))}{(2r)^s}=\liminf_{r\to 0}\frac{\mathcal H^s(B(x, r)\cap E)}{(2r)^s}
\] and
\[
  \Theta^{*s}(\mu, x):=\limsup_{r\to 0}\frac{\mu(B(x,r))}{(2r)^s}=\limsup_{r\to 0}\frac{\mathcal H^s(B(x, r)\cap E)}{(2r)^s}
	\] respectively,
  where $B(x, r)$ is the open interval $(x-r, x+r)$. The  study of   densities for a self-similar measure attracted a lot of attention  in the literature (see \cite{Ayer-Strichartz-1999, Bedford-Fisher-1992, Falconer-1992, Morters-Preiss-1998, Olsen-2008, Strichartz-Taylor-Zhang-1995} and the references therein).
{When $N=2$ and $\rho\in(0, 1/3]$, Feng et al \cite{Feng-Hua-Wen-2000} explicitly calculated the pointwise densities $\Theta_*^s(\mu, x)$ and $\Theta^{*s}(\mu,x)$ for any $x\in E$. The upper bound $1/3$ for $\rho$ was later  improved to $(\sqrt{3}-1)/2$ by Wang et al.~\cite{Wang-Wu-Xiong-2011}. Motivated by their work, Li and Yao \cite{Li-Yao-2008} determined the pointwise densities of the self-similar measure for non-homogeneous self-similar IFSs.  Dai and Tang \cite{Dai-Tang-2012} considered   the same problem as in \cite{Feng-Hua-Wen-2000} but for $N=3$ and $\rho\in(0, 1/6]$.}

In this paper we consider   the  pointwise  lower and upper   $s$-densities $\Theta_*^s(\mu, x)$ and $\Theta^{*s}(\mu, x)$ for $N\ge 2$ and $\rho\in(0, 1/N^2]$. To state our main results we first define a $N$-to-$1$ map   $T: \bigcup_{i=0}^{N-1}f_i([0,1]) \to [0,1]$ such that
\[
T(x)=f_i^{-1}(x)=\frac{x-iR}{\rho}\quad\textrm{if}\quad x\in f_i([0,1])=[iR, iR+\rho].
\]
Then $T(E)=E$ and $\mu\circ T^{-1}=\mu$. Furthermore, for $x=\pi(d_1d_2\ldots)\in E$ we have $T^n(x)=\pi(d_{n+1}d_{n+1}\ldots)$. So, $\pi$ is the isomorphic map from the symbolic dynamics $(\set{0,1,\ldots, N-1}^\N, \si)$ to the expanding dynamics $(E, T)$, where $\si$ is the left shift map.

Our first result describes the pointwise densities of $\mu$.
	\begin{main}
\label{main:densities}
Let $N\ge 2, 0<\rho\le 1/N^2$ and $s=-\log N/\log \rho$. Suppose $\mu$ is the   self-similar measure supported on $E$ satisfying (\ref{eq:mu}).
\begin{itemize}
\item[{\rm(i)}] For any $x=\pi(d_1d_2\ldots)\in E$ the pointwise  lower $s$-density of $\mu$ at $x$ is given by
\[
\Theta_*^s(\mu, x)=\frac{1}{2^s}\frac{1}{(R/\rho-\liminf_{n\to\f}\ga_n(x))^s},
\]
where
\[
\ga_n(x):=\left\{\begin{array}{lll}
T^n x&\textrm{if}& d_n=0,\\
\max\set{T^n x, 1-T^n x}&\textrm{if}& 1\le d_n\le N-2,\\
1-T^n x&\textrm{if}& d_n=N-1.
\end{array}\right.
\]

\item [{\rm(ii)}] For any $x\in E$ the pointwise upper $s$-density of $\mu$ at $x$ is given by
\[
\Theta^{*s}(\mu, x)= \max\set{\frac{1}{2^s(\liminf_{n\to\f}\eta_n(x))^s}, ~ \limsup_{n\to\f}\frac{1+2\hat d_n}{2^s(\hat d_n R/\rho+\eta_n(x))^s}},
\]
where
\[
\eta_n(x):=\max\set{T^n x, 1-T^n x}\quad\textrm{and}\quad \hat d_n:=\min\set{d_n, N-1-d_n}.
\]

\item[{\rm(iii)}] For $\mu$-almost every $x\in E$ we have
\[
\Theta_*^s(\mu, x)=\left(2R/\rho\right)^{-s} \quad\textrm{and}\quad
\Theta^{*s}(\mu, x)=\left\{\begin{array}{lll}
1&\textrm{if}& N\textrm{ is odd},\\
(N R)^{-s}&\textrm{if}& N\textrm{ is even}.
\end{array}\right.
\]
\end{itemize}
	\end{main}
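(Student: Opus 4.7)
The plan is to exploit the self-similar structure: at each level $n$, $\mu$ assigns mass $\rho^{ns}$ to every level-$n$ cylinder $f_{e_1\ldots e_n}([0,1])\cap E$, so the density ratio takes the form $k_n(r)\rho^{ns}/(2r)^s$, where $k_n(r)$ counts the level-$n$ cylinders contained in $B(x,r)$. The proof then reduces to identifying the thresholds of $r$ at which $k_n(r)$ jumps and optimizing over $n$. The assumption $\rho\le 1/N^2$ (equivalently $R/\rho\ge N+1$) is used throughout to control interference from coarser scales: the level-$(n-1)$ gap $\rho^{n-1}(R-\rho)$ dominates the level-$n$ scale so strongly that all binding geometric constraints come from level $n$ alone.

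For part (i), I would fix $x=\pi(d_1d_2\ldots)$ and, for each $n$, compute the largest radius $r_n^\star$ for which $B(x,r_n^\star)\cap E\subseteq f_{d_1\ldots d_n}([0,1])$. A direct case analysis on whether $d_n$ is interior or extremal yields $r_n^\star=\rho^n(R/\rho-\gamma_n(x))$, the three formulas for $\gamma_n$ reflecting which sides carry a level-$n$ neighbor; in the extremal cases, $\rho\le 1/N^2$ is used to check that the higher-level neighbor on the opposite side lies further away. Since $\mu(B(x,r))=\rho^{ns}$ for $r\in[\rho^n\eta_n(x),r_n^\star)$, the quotient along this interval tends to $1/[2(R/\rho-\gamma_n(x))]^s$ as $r\uparrow r_n^\star$; taking $\liminf_n$ supplies the upper bound on $\Theta^s_*(\mu,x)$, and the matching lower bound follows by showing that every small $r$ lies in some such interval. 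For part (ii), the same picture furnishes two test-radii per level: $r_n^-=\rho^n\eta_n(x)$, the smallest ball engulfing $f_{d_1\ldots d_n}([0,1])$, which gives density $1/(2\eta_n)^s$, and $r_n^+=\rho^n(\hat d_n R/\rho+\eta_n(x))$, the largest ball containing exactly $2\hat d_n+1$ level-$n$ cylinders symmetrically around $x$ inside $f_{d_1\ldots d_{n-1}}([0,1])$, which gives density $(1+2\hat d_n)/[2(\hat d_n R/\rho+\eta_n)]^s$. Taking $\limsup_n$ of the maximum of these two quantities yields the lower bound on $\Theta^{*s}$; the matching upper bound is a shell-by-shell case analysis on $r$ ruling out that any other scale produces a strictly higher density.

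Part (iii) then follows by applying (i) and (ii) to $\mu$-typical $x$, using that $\mu$, being the pushforward of the uniform Bernoulli measure under $\pi$, is ergodic for $T$, so every finite word appears infinitely often in the coding $(d_i(x))$. Long blocks of zeros give $\liminf\gamma_n(x)=0$, hence $\Theta^s_*(\mu,x)=(2R/\rho)^{-s}$. Density of the orbit $\{T^n x\}$ in $E$ gives $\liminf\eta_n(x)=\min_{y\in E}\max(y,1-y)$, which equals $1/2$ for $N$ odd (attained at $1/2\in E$, coded by $((N-1)/2)^\infty$) and $NR/2$ for $N$ even (attained at the two endpoints of $E$'s central gap). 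The main obstacle I anticipate is checking, for such typical $x$, that the $B$-family in (ii) never exceeds the $A$-family; this reduces to the sharp inequality $(2k+1)(\eta^*)^s\le(kR/\rho+\eta^*)^s$ for each admissible $k\le\lfloor(N-1)/2\rfloor$, which under $\rho^s=1/N$ is an equality at the boundary $\rho=1/N^2$ and must be verified strict for $\rho<1/N^2$. I would isolate this as a preliminary numerical lemma and reuse it in the extremal-digit checks of (i) and the shell analysis of (ii).
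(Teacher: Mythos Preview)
Your overall strategy matches the paper's: reduce via self-similarity to a single-scale geometric analysis, identify the critical radii, and optimize. But there is a genuine gap in your lower-bound argument for part (i).

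You claim the matching lower bound follows because ``every small $r$ lies in some such interval'' $[\rho^n\eta_n(x),\,r_n^\star)$. This is false: these intervals do \emph{not} cover $(0,\rho)$. For instance, with $N=2$, $\rho=1/4$, $x=0$, one has $\eta_n=1$, $\gamma_n=0$, so the intervals are $[(1/4)^n,\,3(1/4)^n)$, leaving gaps $[3(1/4)^n,\,4(1/4)^n)$. On such a gap $B(x,r)$ has already reached into a neighbouring level-$n$ cylinder, so $\mu(B(x,r))>\rho^{ns}$, and to show the ratio has not dropped below the target you need a \emph{lower} bound on this extra mass. The paper supplies this via the estimate $\mu([0,t])\ge(\rho/R)^s t^s$ (Lemma~\ref{lem:density-2}), proved from an explicit formula for $\mu([0,t])$ when $t\in E$, together with monotonicity of the auxiliary function $r\mapsto [A+(r-B)^s]/r^s$ on the partial-overlap region and of the sequence $j\mapsto j/(jR-x)^s$ (Lemma~\ref{lem:inequality}). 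These are precisely the ingredients that force the infimum of the density ratio to occur at $r=r_n^\star$ rather than somewhere in the overlap shells; without them your lower bound on $\Theta^s_*$ is not established. More generally, when $1\le d_n\le N-2$ the ball can grow to engulf several level-$n$ cylinders on each side before reaching the level-$(n-1)$ boundary, and each of these shells must be checked.

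The same issue recurs in (ii): your ``shell-by-shell case analysis'' is correct in spirit, but this is where the bulk of the work lies (the paper's Proposition~\ref{prop:upper-density} splits each shell into four sub-cases and uses $\mu([0,t])\le t^s$ plus several monotonicity checks), and your sketch gives no indication of how the partial-overlap shells are controlled or why the two test radii you name dominate. Your plan for (iii) is essentially the paper's; the boundary inequality you flag is their Lemma~\ref{lem:upper-bound-density}, and it does require a nontrivial calculus argument.
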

\begin{remark}\mbox{}

\begin{itemize}
\item
We point out that the upper bound $1/N^2$ for the contraction ratio $\rho$ in Theorem \ref{main:densities} is not optimal. In fact, by a careful estimation one can improve the upper bound to $1/N^{\log3/\log2}$ for the lower density, and to $1/N^{\log 2/\log(3/2)}$ for the upper density.

\item By Theorem \ref{main:densities} (i)  it follows that if $x$ is an endpoint of $E$, i.e., $x$ has a unique coding ending with $0^\f$ or $(N-1)^\f$, then $\ga_n(x)=0$ for all sufficiently large $n$. So,
$
\Theta_*^s(\mu, x)=(2R/\rho)^{-s},
$
which is equal to the typical value of the lower density by Theorem \ref{main:densities} (iii).
 Furthermore, by Theorem \ref{main:densities} (ii) it follows that if $x\in E$ is an endpoint,  then    $\hat d_n=0$ and $\eta_n(x)=1$ for all large integers $n$, and so
  $
 \Theta^{*s}(\mu,x)={2^{-s}}.
 $

\end{itemize}
\end{remark}

 Note by Theorem \ref{main:densities} (iii) that for $\mu$-almost every $x\in E$ we have
 $\Theta_*^s(\mu, x)<\Theta^{*s}(\mu, x)$. This implies that $E$ is \emph{irregular} (see \cite[Chapter 5]{Falconer_1990} for its definition). In fact,  the pointwise lower and upper densities are distinct at every point of $E$.  Observe that $\ga_n(x)\in[0, 1]$ and $\eta_n(x)\in[1/2, 1]$ for any $x\in E$. So,  by Theorem \ref{main:densities} it follows that
 \begin{equation}\label{eq:bounds}
 (2R/\rho)^{-s}\le \Theta_*^s(\mu,x)\le \left(2R/\rho-2\right)^{-s}\quad<\quad2^{-s}\le \Theta^{*s}(\mu, x)\le 1
 \end{equation}
 for all $x\in E$, where the third inequality follows by Lemma \ref{lem:inequality} (see below) and the last inequality is well-known (see \cite[Theroem 5.1]{Falconer_1990} or \cite{Mattila_1995}). In fact, when $N$ is even, the upper bound for $\Theta^{*s}(\mu, x)$ can be refined  to $(NR)^{-s}<1$ (see Lemma \ref{lem:upper-bound-density} below). This motivates us to study the sets
 \[
 E_*(a):=\set{x\in E: \Theta_*^s(\mu, x)\ge a}\quad\textrm{and}\quad E^*(b):=\set{x\in E: \Theta^{*s}(\mu, x)\le b}
 \]
with $a, b\in\R$.

 Clearly, the set-valued map $a\mapsto E_*(a)$ is non-increasing. By (\ref{eq:bounds}) it follows that $E_*(a)=E$ if $a\le (2R/\rho)^{-s}$, and  $E_*(a)=\emptyset$ if $a>(2R/\rho-2)^{-s}$. So, there must exist a critical value $a_c\in((2R/\rho)^{-s}, (2R/\rho-2)^{-s})$ such that $\dim_H E_*(a)>0$ if and only if $a<a_c$. Similarly, the set-valued map $b\mapsto E^*(b)$ is non-decreasing. Again by (\ref{eq:bounds}) it gives that $E^*(b)=\emptyset$ if $b<2^{-s}$, and $E^*(b)=E$ if $b\ge 1$. So there  exists a critical value $b_c\in(2^{-s}, 1)$ such that
 $\dim_H E^*(b)>0$ if and only if $b<b_c$.

 Our next   result is to describe the  {critical values}  $a_c$ and $b_c$ for the sets  $E_*(a)$ and $E^*(b)$, respectively. Inspired by some works on the critical values in unique beta expansions (cf.~\cite{Glendinning_Sidorov_2001, Kong_Li_Dekking_2010}) and open dynamics (cf.~\cite{Kalle-Kong-Langeveld-Li-18}),
   we   introduce two Thue-Morse type sequences $(\theta_i)$ and $(\la_i)$ in $\set{0,1,\ldots, N-1}^\N$. For a word $\mathbf c=c_1\ldots c_n$ we define its \emph{reflection} by $\overline{\mathbf c}:=(N-1-c_1)\ldots (N-1-c_n)$. Furthermore, if $c_n<N-1$, then we write $\mathbf c^+:=c_1\ldots c_{n-1}(c_n+1)$; and if $c_n>0$, then we set $\mathbf c^-:=c_1\ldots c_{n-1}(c_n-1)$.

\begin{definition}
\label{def:sequences}
The sequences  $(\la_i)_{i=1}^\f$ and $(\theta_i)_{i=1}^\f$  are defined recursively as follows. Set $\la_1=\theta_1=N-1$, and if $\la_1\ldots \la_{2^n}$ and $\theta_1\ldots \theta_{2^n}$ are defined for some $n\ge 0$, then
\[\la_{2^n+1}\ldots\la_{2^{n+1}}=\overline{\la_1\ldots\la_{2^n}}^+\quad\textrm{and}\quad  \theta_{2^n+1}\ldots \theta_{2^{n+1}}=\overline{\theta_1\ldots\theta_{2^n}}.\]
\end{definition}
By Definition \ref{def:sequences} it follows that the sequence $(\la_i)\in\set{0,1, N-2, N-1}^\N$   begins with
\[
(\la_i)=(N-1)10(N-1)\,0(N-2)(N-1)1\; 0(N-2)(N-1)0(N-1)10(N-1)\ldots,
\]
and the sequence
$(\theta_i)\in\set{0, N-1}^\N$   begins with
\[
(\theta_i)=(N-1)00(N-1)\, 0(N-1)(N-1)0\; 0(N-1)(N-1)0 (N-1)00(N-1)\ldots.
\]
 We emphasize that for $N=2$ the sequence $(\la_i)$ is the shift of the classical Thue-Morse sequence, and the sequence $(1-\theta_i)$ is indeed the Thue-Morse sequence (cf.~\cite{Alloche_Shallit_2003}).

Now we state our second  result on  the critical values  of  $E_*(a)$ and $E^*(b)$, respectively.

\begin{main}\label{main:critical-values}
Let $N\ge 2$ and $0<\rho\le 1/N^2$.
\begin{enumerate}
\item[(A)] The critical value for   $E_*(a)$ is given by
\[
a_c=\left(2 \Big(\frac{R}{\rho}-1+R\sum_{i=1}^\f \la_{i+1}\rho^{i-1}\Big)\right)^{-s}.
\]
That is
(i) if $a<a_c$, then $\dim_H E_*(a)>0$;
(ii) if $a=a_c$, then  $E_*(a)$ is uncountable;
and (iii) if $a>a_c$, then $E_*(a)$ is at most countable.

\item[(B)] The critical value for $E^*(b)$ is given by
\[
b_c=\left(2R\sum_{i=1}^\f\theta_i\rho^{i-1}\right)^{-s}.
\] That is
(i)  if $b<b_c$, then $E^*(b)$ is at most countable;
(ii) if $b=b_c$, then   $E^*(b)$ is uncountable;
and (iii) if $b>b_c$, then $\dim_H E^*(b)>0$.

\end{enumerate}
\end{main}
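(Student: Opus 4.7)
My plan is to transfer both statements to the symbolic side $\set{0,1,\ldots,N-1}^{\N}$ via the coding map $\pi$, convert the pointwise density formulas of Theorem~\ref{main:densities} into combinatorial admissibility conditions on the tails of $(d_i)$, and then pin down the critical thresholds by exhibiting the Thue--Morse sequences of Definition~\ref{def:sequences} as the extremal codings.

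First I carry out the reduction. From Theorem~\ref{main:densities}(i), the inequality $\Theta_*^s(\mu,x)\ge a$ is equivalent to
\[
\liminf_{n\to\f}\ga_n(x)\;\ge\;R/\rho-(2a^{1/s})^{-1},
\]
and a short computation gives that at $a=a_c$ this threshold equals $\pi(\overline{\la_2\la_3\ldots})$. Similarly, part~(ii) turns $\Theta^{*s}(\mu,x)\le b$ into the conjunction of $\liminf_n\eta_n(x)\ge (2b^{1/s})^{-1}$ with a $\limsup$-constraint on the second term involving $\hat d_n$; at $b=b_c$ the first threshold equals $\pi(\theta_1\theta_2\ldots)$. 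Denote by $\w_c\subseteq\set{0,1,\ldots,N-1}^{\N}$ the set of codings whose tails $\si^n((d_i))$ eventually satisfy the relevant admissibility condition at level $c$. Since $\pi$ is a H\"older bijection, $E_*(a)$ (resp.\ $E^*(b)$) is at most countable, uncountable, or of positive Hausdorff dimension precisely when $\w_c$ is.

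Next I would identify the extremal sequences. Unpacking $\ga_n$ shows that ``$\ga_n(x)\ge c$ for all large $n$'' becomes a lexicographic bound on every tail: $\si^n((d_i))$ must be lex $\ge$ (coding of $c$) or lex $\le$ (coding of $1-c$), with the extra rule that $d_n=0$ forbids the second option and $d_n=N-1$ forbids the first (these being the one-sided cases in the definition of $\ga_n$). I build the lex-largest boundary-admissible coding inductively on blocks of length $2^n$: self-similarity forces the tail starting at position $2^n+1$ to itself be admissible, but reflected and ``$+1$''-corrected to compensate for a boundary carry at index $2^n$ --- this is exactly the recursion $\la_{2^n+1}\ldots\la_{2^{n+1}}=\overline{\la_1\ldots\la_{2^n}}^+$ defining $(\la_i)$. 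For part~(B), the second $\limsup$-term of Theorem~\ref{main:densities}(ii) forces $\hat d_n=0$ eventually at the critical level, so the extremal coding lies in $\set{0,N-1}^{\N}$; this removes the ``$+1$'' correction and leaves the pure reflection recursion defining $(\theta_i)$.

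Given $(\la_i)$ and $(\theta_i)$, the three regimes follow from sizing $\w_c$. For $a>a_c$ (resp.\ $b<b_c$) the tail bound is strictly tighter than the one saturated by $(\la_i)$ (resp.\ $(\theta_i)$), and a direct lex argument forces every admissible coding to be eventually a shift of $(\la_i)$ or of its reflection --- a countable collection. At the critical value I plan to produce a continuum of admissible codings by a Thue--Morse substitution: along any choice of sequence in $\set{0,1}^{\N}$, concatenate blocks $\la_1\ldots\la_{2^{n_k}}$ and $\overline{\la_1\ldots\la_{2^{n_k}}}^+$ (resp.\ their $\theta$-analogues) of rapidly growing length, each block being boundary-admissible by construction, so admissibility of the concatenation is automatic. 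For $a<a_c$ (resp.\ $b>b_c$) there is genuine slack, and truncating the Thue--Morse tail bound at some finite level yields a topological Markov subshift inside $\w_c$ whose Perron--Frobenius entropy is strictly positive, transferring under $\pi$ to $\dim_H E_*(a)>0$. The genuinely hard step is the rigidity at the critical value: proving that the only codings in $\w_{c_{\mathrm{crit}}}$ lie in the orbit closure of $(\la_i)$ (resp.\ $(\theta_i)$) together with their reflections, by tracking how any finite-block deviation from $(\la_i)$ propagates under the Thue--Morse self-substitution and must eventually force some later tail to violate the lex inequality. The assumption $\rho\le 1/N^2$ is invoked precisely here, since it is what guarantees that a single-digit lex difference dominates all subsequent digits in real value --- so that lex comparisons faithfully encode the analytic density condition.
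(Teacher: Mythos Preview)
Your overall architecture --- transfer to symbolic space, recast density thresholds as lexicographic tail-constraints, then size the resulting shift spaces --- matches the paper's. But two of your key technical claims are incorrect and would derail the argument.

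\textbf{The countable regime.} You assert that for $a>a_c$ every admissible coding is eventually a shift of $(\la_i)$ or its reflection. This is false: the shift orbit of $(\la_i)$ is countable, yes, but it is not what the admissible set looks like. The paper instead approximates $t_\ga$ from above by $t_n=\pi(\overline{\la_2\ldots\la_{2^n+1}}(N-1)^\f)$ and proves that every $(d_i)\in E_\ga'(t_{n+1})\setminus E_\ga'(t_n)$ is \emph{eventually periodic}, ending in $(v_n\overline{v_n})^\f$ with $v_n=\la_2\ldots\la_{2^n+1}$. These tails are periodic, not Thue--Morse tails, and the layered structure $E_\ga'(t_n)\subseteq E_\ga'(t_{n+1})$ is what makes the union countable. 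Your proposed ``direct lex argument'' would need a different target. The same correction applies to part~(B): in $E_\eta'(t_{n+1})\setminus E_\eta'(t_n)$ the sequences end with $(\theta_1\ldots\theta_{2^n}\overline{\theta_1\ldots\theta_{2^n}})^\f$.

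\textbf{The role of $\rho\le 1/N^2$.} You say this hypothesis is invoked so that lex order matches real order. That only needs $\rho<1/N$ (strong separation, so $\pi$ is a bijection). The bound $\rho\le 1/N^2$ is used upstream, in the proof of Theorem~\ref{main:densities}, to make the density formulas valid in the first place; once you have those formulas the combinatorial argument is purely symbolic.

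\textbf{Part (B) asymmetry.} You correctly note that near $b_c$ one can restrict to codings in $\set{0,N-1}^\N$ where $\hat d_n=0$, killing the second term in Theorem~\ref{main:densities}(ii). But beware that $E_\eta(t)\supseteq E^*((2t)^{-s})$ is only an inclusion, not an equality; the paper uses the countability of $E_\eta(t)$ for $t>t_\eta$ to get countability of the smaller $E^*(b)$, and then separately checks that its explicit positive-dimension and uncountable constructions happen to live in $\set{0,N-1}^\N$, hence land inside $E^*(b)$ and not merely in $E_\eta(t)$.
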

Note by (\ref{eq:bounds}) that $(2R/\rho)^{-s}\le a_c\le (2R/\rho-2)^{-s}<2^{-s}\le b_c\le 1$. By some numerical calculation we list the values of $(2R/\rho)^{-s}, a_c, (2R/\rho-2)^{-s}, 2^{-s}$ and $b_c$ for $2\le N\le 8$ and $\rho=1/N^2$ (see Table \ref{tab:1}).


\begin{table}[h!]
  \centering
  \begin{tabular}{c|c|c|c|c|c|c|c}
    \hline
  N& 2 & 3 & 4 & 5 & 6 & 7 & 8 \\\hline
$(2R/\rho)^{-s}\approx$& 0.408248 & 0.353553 & 0.316228 & 0.288675 & 0.267261 & 0.25 & 0.235702 \\\hline
 $a_c \approx$& 0.422744 & 0.38039 & 0.340358 & 0.308856 & 0.284091 & 0.26419 & 0.247828 \\\hline
 $(2R/\rho-2)^{-s}\approx$&0.5 & 0.408248 & 0.353553 & 0.316228 & 0.288675 & 0.267261 & 0.25 \\\hline
 $2^{-s}\approx$&0.707107 & 0.707107 & 0.707107 & 0.707107 & 0.707107 & 0.707107 & 0.707107 \\\hline
 $b_c\approx$& 0.809703 & 0.749479 & 0.730207 & 0.721665 & 0.717129 & 0.714431 & 0.712695 \\
    \hline
  \end{tabular}
  \caption{The list of the  values $(2R/\rho)^{-s}, a_c, (2R-2)^{-s}, 2^{-s}$ and $b_c$ with $N=2,3,\ldots, 8$ and $\rho=1/N^2$. In this case we have $s=-\log N/\log\rho=1/2$ and $R=(1-\rho)/(N-1)=N^{-1}+N^{-2}$. }
  \label{tab:1}
\end{table}

	The rest of the paper is arranged as follows. In Section \ref{s2} we describe  the pointwise lower and upper densities of $\mu$ at each $x\in E$ and prove Theorem \ref{main:densities}. In Section \ref{sec:critical-values} we determine  the critical values of $E_*(a)$ and $E^*(b)$ respectively, and prove Theorem \ref{main:critical-values}. 

	\section{Pointwise densities of $\mu$}\label{s2}

	In this section we will describe the pointwise densities of $\mu$ at any point $x\in E$, and prove Theorem \ref{main:densities}.
Note that  $E\subseteq \cup_{i=0}^{N-1}f_i([0, 1])$ and the union is pairwise disjoint. Then by (\ref{eq:mu}) it follows that the measure $\mu$ has the same weight $1/N$ on each basic interval $f_i([0, 1])$. This is a special case of  the following  lemma.
\begin{lemma}\label{lem:density-1}
Let $N\ge 2$ and $0<\rho\le 1/N^2$. Then
for any measurable subset $A\subset (-1, 2)$ and any $d_1\ldots d_n\in\set{0,1,\ldots, N-1}^n$ we have
\[
\mu\big(f_{d_1\ldots d_n}(A)\big)=\frac{\mu(A)}{N^n}.
\]
\end{lemma}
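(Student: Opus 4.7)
My plan is to prove this by induction on $n$, with essentially all the content in the base case $n=1$.

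For $n=1$, since $\mu$ is supported on $E\subset[0,1]$, one has $\mu(A)=\mu(A\cap[0,1])$, so it suffices to show $\mu(f_j(A))=\mu(A\cap[0,1])/N$ for every $j\in\set{0,\ldots,N-1}$. I would split $A$ into the three pieces $A\cap[0,1]$, $A\cap(-1,0)$ and $A\cap(1,2)$ and handle them separately. For the left outer piece one has $f_j(A\cap(-1,0))\subset(jR-\rho,jR)$, which fits inside the gap $((j-1)R+\rho,jR)$ between $f_{j-1}([0,1])$ and $f_j([0,1])$ as soon as $\rho\le R-\rho$, equivalently $\rho\le 1/(2N-1)$; when $j=0$ the interval $(-\rho,0)$ simply sits outside $[0,1]$. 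Since $\rho\le 1/N^2\le 1/(2N-1)$ for $N\ge 2$, this piece has $\mu$-measure zero, and symmetric reasoning handles $A\cap(1,2)$. For the main piece $A':=A\cap[0,1]$, the self-similarity relation (\ref{eq:mu}) gives $\mu(f_j(A'))=\sum_{i=0}^{N-1}\frac{1}{N}\mu(f_i^{-1}(f_j(A')))$; the $i=j$ summand contributes $\mu(A')/N$, while for $i\ne j$ the preimage equals the translate $A'+(j-i)R/\rho$, which lies entirely outside $[0,1]$ because $\rho<1/N$ forces $R/\rho>1$. Together these observations yield $\mu(f_j(A))=\mu(A)/N$.

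For the induction step I write $f_{d_1\ldots d_{n+1}}(A)=f_{d_1\ldots d_n}(f_{d_{n+1}}(A))$ and set $B:=f_{d_{n+1}}(A)$. Since $B\subset f_{d_{n+1}}((-1,2))=(d_{n+1}R-\rho,d_{n+1}R+2\rho)\subset(-\rho,1+\rho)\subset(-1,2)$, the inductive hypothesis applies to $B$ and gives $\mu(f_{d_1\ldots d_n}(B))=\mu(B)/N^n$. Combining this with $\mu(B)=\mu(A)/N$ from the base case yields $\mu(f_{d_1\ldots d_{n+1}}(A))=\mu(A)/N^{n+1}$, completing the induction.

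The only nontrivial ingredient is the geometric verification that the two $\rho$-wide extensions of $f_j([0,1])$ sit inside $\mu$-null neighboring gaps (or outside $[0,1]$), which is precisely where the hypothesis $\rho\le 1/N^2$ enters, through its weaker consequence $\rho\le 1/(2N-1)$; everything else is a mechanical application of the self-similarity of $\mu$.
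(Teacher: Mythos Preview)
Your proof is correct and follows essentially the same approach as the paper: induction on $n$, with the base case handled via the self-similarity relation (\ref{eq:mu}) together with the geometric observation that $2\rho\le R$ (equivalently $\rho\le 1/(2N-1)$, a consequence of $\rho\le 1/N^2$). The paper's version is slightly more streamlined in that it does not split $A$ into three pieces but instead directly observes $f_i(A)\subset(iR-\rho,\,iR+2\rho)$ and checks this interval misses $f_j(E)\subset[jR,jR+\rho]$ for every $j\ne i$; this is equivalent to your argument, just packaged without the decomposition.
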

\begin{proof}
Let $A\subset(-1, 2)$ be a $\mu$-measurable set. Recall that $R=(1-\rho)/(N-1)$. Then  for each $i\in\set{0,1,\ldots, N-1}$ we have
$
f_i(A)\subset\left(-\rho+i R, 2\rho+i R\right).
$
Since $\rho\le 1/N^2$ and $f_j(E)\subset f_j([0, 1])$, one can easily verify that
\[
f_i(A)\cap f_j(E)=\emptyset\quad\forall~i\ne j.
\]
Then by (\ref{eq:mu}) this implies that
$
\mu(f_i(A))={\mu(A)}/{N}.
$
So, by induction on $n$ it follows that
\[
\mu(f_{d_1\ldots d_n}(A))=\frac{\mu(A)}{N^n}\quad\textrm{for all }n\in\N,
\]
completing the proof.
\end{proof}
In the following we give the   bounds for $\mu([0, t])$ and $\mu([t, 1])$, which plays an important role in describing the densities of $\mu$.

\begin{lemma}\label{lem:density-2}
Let $N\ge 2$ and $0<\rho\le 1/N^2$. Then for any $t\in [0, 1]$ we have
\[
\left(\frac{\rho}{R}\right)^s\cdot t^s\le \mu([0, t])\le t^s\quad\textrm{and}\quad \left(\frac{\rho}{R}\right)^s(1-t)^s\le \mu([t, 1])\le (1-t)^s.
\]
\end{lemma}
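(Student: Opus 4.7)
The plan is to use the symmetry of the IFS together with a recursive decomposition of $\mu([0,t])$. First I would note that $f_{N-1-i}(1-x) = 1 - f_i(x)$ for every $i$, so both $E$ and $\mu$ are invariant under $x \mapsto 1-x$; hence $\mu([t,1]) = \mu([0, 1-t])$, and the second pair of inequalities reduces to the first. It thus suffices to prove
\[
(\rho/R)^s\, t^s \;\le\; \mu([0, t]) \;\le\; t^s \qquad \text{for every } t \in [0, 1].
\]

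I would then split cases according to where $t$ lies relative to the first-level basic intervals. If $t \in [kR,\, kR + \rho]$ (inside the $(k+1)$-st basic interval, $k \in \{0, \ldots, N-1\}$), Lemma \ref{lem:density-1} gives $\mu([0, t]) = k/N + \mu([0, u])/N$ with $u = (t-kR)/\rho \in [0,1]$; while if $t \in [kR + \rho,\, (k+1)R]$ lies in a gap with $k \in \{0, \ldots, N-2\}$, then $\mu([0, t]) = (k+1)/N$. Using $\rho^s = 1/N$, both bounds will follow by induction, with the $k = 0$ basic-interval case reducing to the scaling identity $\mu([0, t])/t^s = \mu([0, t/\rho])/(t/\rho)^s$ on $(0, \rho]$.

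For the upper bound, the gap case gives $\mu([0,t])/t^s = (k+1)\rho^s/t^s$, which is maximized at $t = kR + \rho$; the required inequality $(k+1)^{1/s}\rho \le kR + \rho$ rewrites, with $\alpha = 1/s$, as $((k+1)^\alpha - 1)/k \le (N^\alpha - 1)/(N - 1)$, which holds for $k \le N-2$ by the monotonicity of $y \mapsto (y^\alpha - 1)/(y-1)$ together with $\alpha \ge 2$ (this is exactly where $\rho \le 1/N^2$ enters). In a basic interval one introduces $F(u) = (kR + \rho u)^s - \rho^s(k + u^s)$, feeds in the inductive hypothesis $\mu([0,u]) \le u^s$, and checks $F(0) \ge 0$, $F(1) \ge 0$, and that $F$ is monotone in $u$. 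The lower bound is analogous: in a gap $(k+1)\rho^s/t^s \ge (k+1)^{1-s}(\rho/R)^s \ge (\rho/R)^s$; in a basic interval with $k \ge 1$ the function $G(u) = \rho^s(k + (\rho/R)^s u^s) - (\rho/R)^s(kR + \rho u)^s$ satisfies $G(0) = (k - k^s)\rho^s \ge 0$ and $G(1) \ge 0$ via the subadditivity $(kR + \rho)^s \le (kR)^s + \rho^s$ for $s \le 1$, while a short derivative calculation shows $G$ is monotone on $(0,1]$, closing the induction. The one delicate step — and the only place where the hypothesis $\rho \le 1/N^2$ genuinely bites — is the gap-case upper bound; the rest is routine inductive bookkeeping.
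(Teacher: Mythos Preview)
Your approach is correct and genuinely different from the paper's. For the upper bound the paper simply cites a result of Zhou that $\mathcal H^s(E\cap U)\le |U|^s$ for every set $U$; for the lower bound it first derives the explicit formula $\mu([0,t])=\sum_{i\ge 1}d_i N^{-i}$ when $t=\pi((d_i))\in E$, and then applies the elementary subadditivity $\bigl(\sum x_i\bigr)^s\le \sum x_i^s$ for $s\in(0,1]$. You instead iterate the self-similar recursion $\mu([0,kR+\rho u])=\rho^s\bigl(k+\mu([0,u])\bigr)$ and control the auxiliary functions $F$ and $G$ by direct calculus. The paper's route is shorter and yields the closed formula as a by-product; yours is fully self-contained, requiring no appeal to Zhou.

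Two remarks. First, make the induction variable explicit: as written, the recursion sends $t$ to $u=T(t)$, another arbitrary point of $[0,1]$, so the argument is superficially circular. The clean fix is to induct on $n$ over the level-$n$ left endpoints $t=\pi(d_1\ldots d_n 0^\infty)$ (then $u=\pi(d_2\ldots d_n 0^\infty)$ has level $n-1$), and pass to general $t\in E$ by density and continuity of $t\mapsto\mu([0,t])$; your gap analysis already handles $t\notin E$. Second, your parenthetical that the gap-case upper bound ``is exactly where $\rho\le 1/N^2$ enters'' is not accurate: the map $y\mapsto (y^\alpha-1)/(y-1)$ is increasing on $(1,\infty)$ for every $\alpha>1$ (a one-line derivative check), so your whole argument --- like the paper's --- in fact goes through for all $\rho\in(0,1/N)$. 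The hypothesis $\rho\le 1/N^2$ is a standing assumption needed elsewhere in the paper, not for this lemma.
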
	
\begin{proof}
Let $t\in [0, 1]$. We only prove the bounds for $\mu([0, t])$, since the bounds for $\mu([t,1])$ can be obtained by using the symmetry of $\mu$ that $\mu(B(x,r))=\mu(B(1-x, r))$ for any $x\in E$ and any $r>0$.

Observe that $\mu=\mathcal H^s|_E$. Then  the upper bound for $\mu([0, t])$  follows directly  from the work of Zhou \cite{Zhou-1998} that $\mathcal H^s(E\cap U)\le |U|^s$ for any $U\subset \R$. So it suffices to prove the lower bound.
 Clearly it  holds for $t=0$. In the following we assume $t\in (0, 1]$. First we consider $t\in E$.  Then by (\ref{eq:proj-map}) there exists a sequence $(d_i)\in\set{0,1,\ldots, N-1}^\N$ such that
$
t=R\sum_{i=1}^\f d_i\rho^{i-1}.
$
We claim that
\begin{equation*}
\mu([0,t])=\sum_{i=1}^\f \frac{d_i}{N^i}.
\end{equation*}

To prove the claim it suffices to prove for any $k\in\N$ that
\begin{equation}\label{eq:kong-01}
\mu([0, t_k])=\sum_{i=1}^k\frac{d_i}{N^i}\quad\textrm{when}\quad t_k=R\sum_{i=1}^k d_i\rho^{i-1}.
\end{equation}
We will prove (\ref{eq:kong-01}) by induction on $k$.
Clearly, for $k=1$ we have $t_1=d_1R$, and then $[0, t_1]$ contains exactly $d_1$ basic intervals of level $1$, i.e.,
\[
\bigcup_{i=0}^{d_1-1}f_i(E)\subset [0, t_1].
\]
So by Lemma \ref{lem:density-1} it follows that $\mu([0, t_1])\ge d_1/N$. On the other hand, $[0, t_1]\cap f_{d_1}(E)$ is a singleton, and $[0, t_1]\cap f_j(E)=\emptyset$ for any $j>d_1$. Therefore, we conclude that $\mu([0, t_1])=d_1/N$. This proves (\ref{eq:kong-01}) for $k=1$.

Now   suppose (\ref{eq:kong-01}) holds for $k=n$, and we consider $k=n+1$. Note that
\[
t_{n+1}=R\sum_{i=1}^{n+1}d_i\rho^{i-1}=t_n+R d_{n+1}\rho^n.
\]
Since each basic interval of level $n$ has length $\rho^n$, this implies that
\[
[t_n, t_{n+1})\cap E=\bigcup_{j=0}^{d_{n+1}-1} f_{d_1\ldots d_n j}(E).
\]
So, by the induction hypothesis and Lemma \ref{lem:density-1} we obtain
\begin{align*}
\mu([0, t_{n+1}])&=\mu([0, t_n])+\mu([t_n, t_{n+1}])=\sum_{i=1}^{n}\frac{d_i}{N^i}+\frac{d_{n+1}}{N^{n+1}}=\sum_{i=1}^{n+1}\frac{d_i}{N^i}.
\end{align*}
This proves (\ref{eq:kong-01}) for $k=n+1$, and then the claim follows by induction.

By the claim and using  $N^{-1}=\rho^s$ it follows that
\[
\frac{\mu([0, t])}{t^s}=\frac{\sum_{i=1}^\f d_i\rho^{i s}}{\left(\frac{R}{ \rho}\right)^s\left(\sum_{i=1}^\f d_i\rho^i\right)^s}\ge \left(\frac{\rho}{R}\right)^s,
\]
where the inequality follows by using $s\in(0, 1)$ and the basic inequality
\[
\left(\sum_{i=1}^\f x_i\right)^s\le \sum_{i=1}^\f x_i^s\quad\forall~ x_i\ge 0.
\]

Now for $t\in(0,1]\setminus E$ let $t'$ be the smallest element in $E$ strictly larger than $t$. Then
$\mu([0, t])=\mu([0, t'])$, and therefore,
\[
\mu([0, t])=\mu([0, t'])\ge \left(\frac{\rho}{R}\right)^s\cdot (t')^s>\left(\frac{\rho}{R}\right)^s\cdot t^s.
\]
This completes the proof.
\end{proof}	

The following lemma is elementary but turns out to be useful in our proofs later.
\begin{lemma}
  \label{lem:inequality}
  Let $N\ge 2$ and $0<\rho\le 1/N^2$.   Then
    \[
  0<\frac{\rho}{R}\le \frac{1}{N+1}.
  \]
  \begin{enumerate}[{\rm(i)}]
  \item Let  $A, B\ge 0$.  If
  $
  \rho^{1-s}A<B,
 $ then the function
  \[
  g(r)=\frac{A+(r-B)^s}{r^s}
  \]
  is  strictly increasing in $[B, B+\rho]$.

 \item Let $m\ge 1$. Then for any $t\in[0, \rho]$ the sequences
 \[
 a_j:=\frac{j}{(j R-t)^s}\quad\textrm{and} \quad b_j:=\frac{1+m j}{(jR+t)^s}, \quad j\ge 1,
 \]
are both strictly   increasing.

  \end{enumerate}
\end{lemma}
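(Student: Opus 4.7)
First I would dispatch the leading inequality $0 < \rho/R \le 1/(N+1)$ by direct algebra: substituting $R = (1-\rho)/(N-1)$, the claim becomes $\rho(N^2 - 1) + \rho \le 1$, i.e.\ $\rho N^2 \le 1$, which is the hypothesis. As by-products I would record the bounds $\rho/R \le 1/(N+1) \le 1/3$ and $s = -\log N/\log\rho \le 1/2$ (so $1 - s \ge 1/2$), which will be used freely below.

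For part (i), the plan is a one-line derivative check. A direct computation, using $(r-B)^{s-1}\bigl(r - (r-B)\bigr) = B(r-B)^{s-1}$, gives
\[
g'(r) = \frac{s}{r^{s+1}}\bigl[B(r-B)^{s-1} - A\bigr].
\]
Since $0 < s < 1$, the map $r \mapsto (r-B)^{s-1}$ is strictly decreasing on $(B, B+\rho]$, so $B(r-B)^{s-1} \ge B\rho^{s-1}$ on that interval. The hypothesis $\rho^{1-s} A < B$ rewrites as $A < B\rho^{s-1}$, so the bracket is strictly positive on $(B, B+\rho]$. Combined with continuity at the left endpoint, $g$ is strictly increasing on $[B, B+\rho]$.

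For part (ii), I would promote the two sequences to real-variable functions on $[1, \infty)$. Setting $h(x) = x/(xR-t)^s$ and $H(x) = (1+mx)/(xR+t)^s$, routine differentiation yields
\[
h'(x) = \frac{(1-s)xR - t}{(xR - t)^{s+1}}, \qquad H'(x) = \frac{(1-s)mxR + mt - sR}{(xR+t)^{s+1}}.
\]
In each case the numerator is linear and strictly increasing in $x$, so it suffices to check positivity at $x = 1$. For $h$: $(1-s)R - t \ge (1-s)R - \rho > 0$ since $\rho/R \le 1/3 < 1/2 \le 1 - s$. For $H$: using $m \ge 1$ and $t \ge 0$, $(1-s)mR + mt - sR \ge (1-s)R - sR = (1-2s)R \ge 0$. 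Evaluating at $x = j$ then yields the claimed strict monotonicity of $a_j$ and $b_j$.

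The only mildly delicate point, which I expect to be the main obstacle, is the boundary case $s = 1/2$, $m = 1$, $t = 0$ in the bound for $H$, where $H'(1) = 0$ instead of being strictly positive. However, the numerator of $H'$ is itself strictly increasing in $x$, so $H'(x) > 0$ on $(1, \infty)$, and the fundamental theorem of calculus still gives $H(j+1) > H(j)$ for every integer $j \ge 1$; thus strict monotonicity of the sequence is preserved. No further complications arise, and the argument reduces to the two derivative computations above once the global bound $s \le 1/2$ is in hand.
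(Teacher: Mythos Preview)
Your proof is correct. The opening inequality and part (i) match the paper's argument essentially verbatim: both compute $g'(r)$ and reduce positivity of the bracket to the hypothesis $\rho^{1-s}A<B$ via the monotonicity of $(r-B)^{s-1}$ on $(B,B+\rho]$.

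Part (ii) is where your route genuinely diverges from the paper's. The paper argues discretely: it writes out the inequality $a_{j+1}>a_j$ as $\frac{j+1}{j}>\bigl(\frac{(j+1)R-t}{jR-t}\bigr)^s$, invokes $s\le 1/2$ to reduce to the worst case $s=1/2$, squares both sides, and finishes with an explicit algebraic check using $\rho/R\le 1/(N+1)$; it then says the $b_j$ case is similar. You instead promote each sequence to a $C^1$ function on $[1,\infty)$, compute the derivative, and observe that the numerator is linear increasing in $x$, so positivity at $x=1$ suffices. Your approach is slicker and handles both sequences in one stroke; the paper's discrete manipulation is more elementary but requires redoing the algebra for each sequence. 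Your treatment of the boundary case $s=1/2$, $m=1$, $t=0$ (where $H'(1)=0$) is careful and correct: strict positivity of $H'$ on $(1,\infty)$ still forces $H(j+1)>H(j)$ for every integer $j\ge 1$.
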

\begin{proof}
  Note that $(N-1)R=1-\rho\ge N^2\rho-\rho>0$. This gives
  \[
  0<\frac{\rho}{R}\le \frac{1}{N+1}.
  \]
  For (i) we observe that
   \begin{equation}\label{eq:mar-28-1}
  g'(r)>0\quad\Longleftrightarrow\quad (r-B)^{1-s}A<B.
  \end{equation}
  Note that  $r\in[B, B+\rho]$ and $s\in(0, 1)$. So, if $\rho^{1-s}A<B$, by (\ref{eq:mar-28-1}) it follows that $g'(r)>0$ for all $r\in[B, B+\rho]$. This proves (i).

  For (ii) we note  for $j\ge 1$ that $a_{j+1}>a_j$ if and only if
  \begin{equation}\label{eq:mar-28-2}
  \frac{j+1}{j}>\left(\frac{(j+1)R-t}{jR-t}\right)^s.
  \end{equation}
  Since $\rho\in(0, 1/N^2]$, we have $s=-\log N/\log \rho\in(0, 1/2]$. Thus, (\ref{eq:mar-28-2}) holds if it holds for $s=1/2$, i.e.,
  \[
  \frac{j+1}{j}>\left(\frac{jR-t+R}{jR-t}\right)^{1/2}.
  \]
  By rearrangements and using $t\in[0, \rho]$ we only need to prove
\[
(2j+1)\left(j-\frac{\rho}{R}\right)>j^2.
\]
Since $0<\rho/R\le 1/(N+1)$,  it suffices to verify $(2j+1)(j-\frac{1}{N+1})>j^2$, which follows directly by using  $j\ge 1$.

Similarly, one can verify that $b_{j+1}>b_j$ for all $j\ge 1$.
  \end{proof}

%
	
\subsection{Lower density $\Theta_*(\mu, x)$}	
	In the following we give the lower bound for $\mu(B(x, r))/(2r)^s$, where $B(x, r)=(x-r, x+r)$ is the open ball of radius $r$ at center $x$. Based on Lemma \ref{lem:density-2} we first consider $x\in f_i([0, 1])$ for $i=0$ and $i=N-1$.
	
	\begin{lemma}\label{lem:lower-bound}
	Let $N\ge 2$ and $0<\rho\le 1/N^2$.
\begin{enumerate}[{\rm(i)}]
\item
	If $x\in[0, \rho]$, then for any $\max\set{x, \rho-x}\le r\le 1-x$ we have
	\[
	\frac{\mu(B(x, r))}{(2r)^s}\ge \frac{\rho^s}{2^s(R-x)^s},
	\]
	where the equality holds for $r=R-x$.

\item If $x\in[1-\rho, 1]$, then for any $\max\set{1-x, \rho-(1-x)}\le r\le x$ we have
\[
\frac{\mu(B(x, r))}{(2r)^s}\ge \frac{\rho^s}{2^s(R-(1-x))^s},
\]
where the equality holds for $r=R-(1-x)$.
\end{enumerate}
	\end{lemma}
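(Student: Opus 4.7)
By the reflection symmetry $\mu(B(x, r)) = \mu(B(1-x, r))$ (used in the proof of Lemma~\ref{lem:density-2}), part (ii) follows from part (i) under $x \mapsto 1-x$, so I focus on (i). Fix $x \in [0, \rho]$ and $r \in [\max\{x, \rho-x\}, 1-x]$. Since $r \ge x$ forces $x - r \le 0$ while $r \le 1-x$ forces $x + r \le 1$, and since $\mu$ is non-atomic (because $s > 0$), the ball measure simplifies to $\mu(B(x, r)) = \mu([0, x+r])$. Writing $y = x + r \in [\rho, 1]$, the claimed bound is equivalent to
\[
\mu([0, y])\,(R - x)^s \ge \rho^s\,(y - x)^s, \qquad \text{with equality at } y = R.
\]

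My plan is to partition $[\rho, 1]$ by the level-$1$ cylinder structure and verify the inequality on each piece. \emph{Case 1: $y \in [\rho, R]$}, the initial gap. Here $\mu([0, y]) = \mu([0, \rho]) = \rho^s$ while $y - x \le R - x$, so the inequality holds, with equality exactly at $y = R$. \emph{Case 2: $y \in (iR + \rho, (i+1)R)$ for some $i \ge 1$}, a higher gap. Here $\mu([0, y]) = (i+1)\rho^s$ is constant, and since $y - x < (i+1)R - x$, it suffices to show $(i+1)(R-x)^s > ((i+1)R - x)^s$, which is Lemma~\ref{lem:inequality}(ii) with $t = x$ comparing $j = i+1$ to $j = 1$. \emph{Case 3: $y \in [iR, iR + \rho]$ for some $i \ge 1$}, the delicate case. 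Writing $y = iR + u$ and combining Lemma~\ref{lem:density-1} with the lower bound of Lemma~\ref{lem:density-2} yields
\[
\mu([0, y]) \ge i\rho^s + \rho^s \Big(\frac{u}{R}\Big)^s = \frac{\rho^s}{R^s}\bigl(iR^s + (r - (iR - x))^s\bigr).
\]
Applying Lemma~\ref{lem:inequality}(i) with $A = iR^s$ and $B = iR - x$ shows that $g_i(r) := (iR^s + (r - B)^s)/r^s$ is strictly increasing on $[B, B + \rho]$; chaining this with Lemma~\ref{lem:inequality}(ii) gives
\[
\frac{\mu(B(x, r))}{(2r)^s} \ge \frac{\rho^s}{(2R)^s}\,g_i(r) \ge \frac{\rho^s \cdot i}{(2(iR - x))^s} \ge \frac{\rho^s}{(2(R - x))^s},
\]
where the last step is strict for $i \ge 2$ (by Lemma~\ref{lem:inequality}(ii)) and the middle step is strict for $r > iR - x$, so equality occurs precisely when $i = 1$ and $r = R - x$.

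The main obstacle is verifying the monotonicity hypothesis $\rho^{1-s}\cdot iR^s < iR - x$ of Lemma~\ref{lem:inequality}(i) uniformly in $i \ge 1$ and $x \in [0, \rho]$. Using $x \le \rho$, it suffices to check $\rho^{1-s} R^s < R - \rho$, i.e., $N R^s < R/\rho - 1$ (here I use $\rho^s = 1/N$). The hypothesis $\rho \le 1/N^2$ is essential precisely here: it supplies $R/\rho \ge N + 1$ (so $R/\rho - 1 \ge N$) and $R < 1$ (so $R^s < 1$), whence $N R^s < N \le R/\rho - 1$. The same restriction yields $s \le 1/2$, i.e.\ $1/s \ge 2$, which is the ingredient powering Lemma~\ref{lem:inequality}(ii). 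With these checks in place, the three cases combine to give the claim, and part (ii) follows by reflection.
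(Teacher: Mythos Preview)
Your proof is correct and follows essentially the same route as the paper's: reduce (ii) to (i) by symmetry, observe that $\mu(B(x,r))=\mu([0,x+r])$, then split according to whether $x+r$ lies in a level-1 gap or a level-1 cylinder, invoking Lemma~\ref{lem:density-2} for the cylinder case, Lemma~\ref{lem:inequality}(i) for the monotonicity of $g_i$, and Lemma~\ref{lem:inequality}(ii) to reduce to $i=1$. The only cosmetic differences are that you parametrize by $y=x+r$ rather than $r$, you separate the first gap (where equality is attained) from the higher gaps, and your verification of the hypothesis $\rho^{1-s}iR^s<iR-x$ via $NR^s<N\le R/\rho-1$ is a bit slicker than the paper's reduction to $s=1/2$.
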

	\begin{proof}
Note  that $\mu(B(x,r))=\mu(B(1-x, r))$ for any $x\in E$ and $r>0$. So, (ii) can be deduced from (i) by   the symmetry of $\mu$. In the following we only   prove (i).

	Let $x\in[0, \rho]$, and take $\max\set{x, \rho-x}\le r\le 1-x$. Then $B(x, r)\subset(-1, 2)$, and $B(x, r)$ nearly contains the interval $[0, \rho]$  which means $B(x, r)$ contains $[0, \rho]$ up to a point. Recall that $R=(1-\rho)/(N-1)$. We split the proof into the following two cases.
	
	Case I. $\max\set{x, (j-1)R+\rho-x}\le  r\le j R-x$ for some $j\in\set{1, 2,\ldots, N-1}$. Then $(j-1)R+\rho\le x+r\le j R$. Note that $f_{i+1}(0)-f_i(0)=R$ for any $0\le i<N-1$. Then $B(x, r)$ nearly contains $f_i([0, 1])$ for $0\le i\le j-1$, and $B(x, r)$ has no intersect with any other basic interval of level $1$. So, by Lemma \ref{lem:density-1} it follows that
	\[
	\mu(B(x, r))=j \rho^s.
	\]
	This implies
	\begin{align*}
	\frac{\mu(B(x, r))}{(2r)^s}=\frac{j\rho^s}{(2r)^s}\ge \frac{j\rho^s}{2^s\left(j R-x\right)^s},
	\end{align*}
	where the equality holds for $r=j R-x$.
	
	Case II. $j R-x< r\le j R+\rho-x$ for some $j\in\set{1,2,\ldots, N-1}$. Then $B(x, r)$ contains the level-$1$ basic intervals $f_0([0, 1]),\ldots, f_{j-1}([0, 1])$ and intersect  $f_j([0,1])$. But $B(x,r)$ has no intersect with any other level-$1$ basic interval. So, by Lemmas \ref{lem:density-1}, \ref{lem:density-2} and using the uniformity  of $\mu$ it follows that
	\begin{align*}
	\mu(B(x, r))=j \rho^s+\mu([j R, x+r])&=j \rho^s+\mu([0, x+r-j R])\ge j\rho^s+\left(\frac{\rho}{ R}\right)^s(x+r-j R)^s.
	\end{align*}
	This implies
	\begin{equation}\label{eq:feb-21-1}
	\frac{\mu(B(x, r))}{(2r)^s}\ge \frac{\rho^s}{(2R)^s}g_1(r),\quad\textrm{where}\quad g_1(r):=\frac{j R^s+(x+r-j R)^s}{r^s}.
	\end{equation}
We claim that $g_1(r)$ is strictly increasing in $(jR-x, jR+\rho-x]$.

By Lemma \ref{lem:inequality} (i) with $A=j R^s$ and $B=jR-x$ it suffices to check $\rho^{1-s}j R^s<jR-x$. Since $s\in(0, 1/2]$ and $x\in[0, \rho]$, it suffices to verify
\[
j\rho\left(\frac{R}{\rho}\right)^{1/2}<jR-\rho.
\]
Dividing $R$ on both sides  reduces to
\[
 j\left(\frac{\rho}{R}\right)^{1/2}<j-\frac{\rho}{R}.
\]
This can be easily verified by  using $j\ge 1$ and  $0<\rho/R\le 1/(N+1)$ (see Lemma \ref{lem:inequality}). So, we establish the claim.

Therefore, by  (\ref{eq:feb-21-1}) and the claim it follows that
\[
\frac{\mu(B(x, r))}{(2r)^s}\ge \frac{\rho^s}{(2R)^s}g_1(r)> \frac{\rho^s}{(2R)^s}g_1(j R-x)= \frac{j\rho^s}{2^s\left(j R-x\right)^s}
\]
for any $r\in(jR-x, jR+\rho-x]$.
 Finally, observe by Lemma \ref{lem:inequality} (ii)   that the sequence
\[
\hat a_j=\frac{j}{(j R-x)^s}, \quad j=1,2,\ldots, N-1,
\]
is strictly increasing. This proves (i).
 	\end{proof}

In the following we will determine the lower bound of $\mu(B(x, r))/(2r)^s$ for $x\in f_k([0, 1])$ with $k=1,2,\ldots, N-2$.
\begin{proposition}\label{prop:lower-bound}
Let $N\ge 2$ and $0<\rho\le 1/N^{2}$.
\begin{enumerate}
\item[{\rm(i)}] If $x\in[k R, kR+\rho/2]$ for  some $k\in\set{1,\ldots, N-2}$, then for any $k R+\rho-x \le r\le \max\set{x, 1-x}$ we have
\[
\frac{\mu(B(x, r))}{(2r)^s}\ge \frac{\rho^s}{2^s(R-(kR+\rho-x))^s},
\]
where the equality holds when $r=R-(kR+\rho-x)$.

\item[{\rm(ii)}] If $x\in[k R+\rho/2, k R+\rho]$ for some $k\in\set{1,\ldots, N-2}$, then for any $x-k R \le  r\le \max\set{x, 1-x}$ we have
\[
\frac{\mu(B(x, r))}{(2r)^s}\ge \frac{\rho^s}{2^s(R-(x-k R))^s},
\]
where the equality holds when $r=R-(x-k R)$.
\end{enumerate}
\end{proposition}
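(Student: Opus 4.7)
The plan is to adapt the two-case decomposition of Lemma \ref{lem:lower-bound}, taking into account that now $B(x,r)$ may grow in both directions.

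First, by the reflection symmetry $\mu(B(x,r))=\mu(B(1-x,r))$ and the fact that $x\in[kR+\rho/2,kR+\rho]$ implies $1-x\in[(N-1-k)R,(N-1-k)R+\rho/2]$ with $N-1-k\in\{1,\ldots,N-2\}$, case (ii) reduces to case (i) by replacing $k$ with $N-1-k$. I therefore concentrate on case (i).

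Set $L:=x-kR\in[0,\rho/2]$ and $R_x:=kR+\rho-x=\rho-L\in[\rho/2,\rho]$. For $r\ge R_x$ the ball $B(x,r)$ contains $f_k([0,1])$ (up to at most one boundary point). The first and most direct case is $r\in[R_x,\,R-R_x]$: a short computation shows that $x+r\le (k+1)R+2L-\rho\le(k+1)R$ and $x-r\ge(k-1)R+\rho$, so $B(x,r)$ meets no level-1 basic interval other than $f_k([0,1])$. By Lemma \ref{lem:density-1} we get $\mu(B(x,r))=\rho^s$, and since $\rho^s/(2r)^s$ is strictly decreasing in $r$, its minimum $\rho^s/(2^s(R-R_x)^s)$ is attained at $r=R-R_x$, yielding the stated equality.

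The second step treats the range $r\in(R-R_x,\,\max\{x,1-x\}]$. I partition it by the critical radii at which $x\pm r$ crosses a level-1 basic interval or an adjacent gap; in each sub-range, $B(x,r)\cap E$ consists of finitely many complete basic intervals (whose $\mu$-mass is given by Lemma \ref{lem:density-1}) plus two partial intersections at the ends, which are lower-bounded by Lemma \ref{lem:density-2}. Lemma \ref{lem:inequality}(i) then shows that on each sub-range the ratio $\mu(B(x,r))/(2r)^s$ is increasing in $r$, while Lemma \ref{lem:inequality}(ii) furnishes the discrete monotonicity at the radii where an additional basic interval is fully absorbed. A check at the threshold $r=R-R_x$ shows that the ratio there equals the claimed lower bound, so together with the monotonicity this gives the required inequality throughout the second range.

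The main obstacle is the combinatorial case analysis in this second step. Unlike Lemma \ref{lem:lower-bound}, where $x$ lies in a boundary basic interval and only one side of $B(x,r)$ moves nontrivially, here the two sides advance at different rates (since $L$ and $R_x$ need not be equal), so the critical radii $jR+L-\rho$, $jR+L$, $j'R-L$, $j'R+\rho-L$ interleave in a manner that depends on $L$. Verifying the hypothesis $\rho^{1-s}A<B$ of Lemma \ref{lem:inequality}(i) in each sub-case is routine using $\rho\le 1/N^2$ and $\rho/R\le 1/(N+1)$, but bookkeeping all the orderings is the bulk of the work.
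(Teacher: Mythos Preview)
Your reduction of (ii) to (i) by the reflection symmetry is exactly what the paper does, and your first sub-range $r\in[R_x,R-R_x]$ correctly isolates the equality case. Where you diverge is in the second range $r>R-R_x$: you propose to redo a full two-sided case analysis in the spirit of Lemma~\ref{lem:lower-bound}, tracking all the interleaved critical radii coming from both ends of the ball. The paper avoids this entirely by a translation trick. Since $\mu$ is uniform across the level-1 pieces, translating $x$ by $(N-1-k)R$ sends $x$ into $[1-\rho,1]$ and can only lose mass (the right tail of $B(x,r)$ that might hit $f_{k+1},f_{k+2},\ldots$ now falls outside $[0,1]$), so $\mu(B(x,r))\ge\mu(B((N-1-k)R+x,r))$; one then reads off the bound directly from Lemma~\ref{lem:lower-bound}(ii) for all $r\le x$. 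For the remaining range $x<r\le 1-x$ (only relevant when $x<1/2$) the ball already swallows $[0,x]$, so $\mu(B(x,r))=\mu(B(0,x+r))$ and a single application of Lemma~\ref{lem:density-2} plus an elementary inequality finishes the job.

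Your plan is not wrong in principle, but it is considerably more work than necessary, and a couple of points in your sketch are imprecise: on the ``full-interval'' sub-ranges the ratio $\mu(B(x,r))/(2r)^s$ is \emph{decreasing} in $r$ (constant numerator), not increasing---Lemma~\ref{lem:inequality}(i) only covers the partial-intersection sub-ranges---and once $r>x$ the left end of $B(x,r)$ has left $[0,1]$, so the ``two partial intersections'' picture no longer applies. Both issues are fixable, but the paper's translation-to-the-boundary argument sidesteps them completely and reuses Lemma~\ref{lem:lower-bound} rather than reproving it.
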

\begin{proof}
Observe   for $k\in\set{1,\ldots, N-2}$ that $x\in[k R+\rho/2, kR+\rho]$ if and only if $1-x\in[(N-1-k)R, (N-1-k)R+\rho/2]$. So, (ii) can be deduced  from (i) by using the symmetry of $\mu$. In the following we only prove (i).

 Take $x\in[k R, kR+\rho/2]$ with $k\in\set{1, 2,\ldots, N-2}$. Then for $kR+\rho-x \le r\le x$ the ball $B(x, r)$ nearly contains the basic interval $f_k([0, 1])$. Using the uniformity of $\mu$ it follows that
\[
\mu(B(x, r))\ge \mu(B((N-1-k)R+x, r)).
\]
Observe that $(N-1-k)R+x\in[1-\rho, 1]$. By Lemma \ref{lem:lower-bound} (ii) it follows that
\begin{equation}\label{eq:feb-1}
\begin{split}
\frac{\mu(B(x, r))}{(2r)^s}\ge \frac{\mu(B((N-1-k)R+x, r))}{(2r)^s}&\ge\frac{\rho^s}{2^s(R-(1-(N-1-k)R-x))^s}\\
&= \frac{\rho^s}{2^s(R-(kR+\rho-x))^s},
\end{split}
\end{equation}
where the equalities  hold for $r=R-(kR+\rho-x)$.

If $x\ge 1/2$, then $\max\set{x,1-x}=x$ and we are done. Now suppose $x<1/2$. Then for $x<r\le 1-x$ we have
$\mu(B(x, r))=\mu(B(0, x+r))$.
By Lemma \ref{lem:density-2} it follows that
\begin{equation}\label{eq:mar-27-2}
\frac{\mu(B(x, r))}{(2r)^s}\ge\frac{\rho^s}{2^s}\cdot\frac{(x+r)^s}{(R r)^s}.
\end{equation}
Using $x\ge kR,  r\le 1-x$ and $0<\rho\le 1/N^2$, one can verify that
\[
\frac{x+r}{R r}>\frac{1}{R-(k R+\rho-x)}\quad \textrm{for all }1\le k\le N-2.
\]
So, by (\ref{eq:feb-1}) and (\ref{eq:mar-27-2}) we establish  (i).
\end{proof}

By Lemma \ref{lem:lower-bound}  and  Proposition \ref{prop:lower-bound} it follows that  for any $x\in E$, if   $B(x, r)$ contains at least one level-$1$ basic interval  but  it does not contain the unit interval $[0, 1]$,  then
\begin{equation}\label{eq:feb-19-1}
\frac{\mu(B(x, r))}{(2r)^s}\ge \frac{\rho^s}{2^s(R-S(x))^s},
\end{equation}
where
\begin{equation*}\label{eq:map-s}
S(x)=\left\{\begin{array}{lll}
x&\textrm{if}& 0\le x\le \rho,\\
\max\set{x-k R, kR+\rho-x}&\textrm{if}& k R\le x\le kR+\rho\textrm{ for }1\le k\le N-2,\\
1-x&\textrm{if}& 1-\rho\le x\le 1.
\end{array}\right.
\end{equation*}
Recall that  the $N$-to-$1$ expanding map $T: \bigcup_{k=0}^{N-1}[kR, kR+\rho]\to [0, 1]$   satisfies
	\[
	T(x)= \frac{x-k R}{\rho}\quad\textrm{if}\quad k R\le x\le kR+\rho,
	\]
where $k=0,1,\ldots, N-1$.
So for any
$
x=\pi(d_1d_2\ldots)\in E
$
we have
$
T^n(x)=\pi(d_{n+1}d_{n+2}\ldots).
$

\begin{proof}[Proof of Theorem \ref{main:densities} (i)]
Take $x=\pi(d_1d_2\ldots)\in E$ and $r\in(0, \rho)$. Then there exists $n\in\N$ such that $B(x, r)$ contains the level-$(n+1)$ basic interval $f_{d_1\ldots d_{n+1}}([0, 1])$, but it  does not contain the level-$n$ basic interval $f_{d_1\ldots d_{n}}([0, 1])$. This implies that
\[
T^{n}(B(x, r) )=(f_{d_1\ldots d_{n}})^{-1}(B(x, r) )\supseteq f_{d_{n+1}}([0,1]),
\]
but $T^{n}(B(x, r))$ does not contain $[0,1]$.

Let $y=T^{n}x=\pi(d_{n+1}d_{n+2}\ldots)$, and let $r'=\rho^{-n}r$. Then $y\in f_{d_{n+1}}([0,1])$,  $\rho/2<r'<1$ and $T^{n}(B(x, r) )=B(y, r') $. By Lemma \ref{lem:density-1} and (\ref{eq:feb-19-1}) it follows that
\begin{equation}\label{eq:feb-19-2}
\begin{split}
\frac{\mu(B(x, r))}{(2r)^s}=\frac{(\rho^s)^{n}\mu(B(y, r'))}{(2r)^s}&=\frac{\mu(B(y, r'))}{(2r')^s}\\
&\ge \frac{\rho^s}{2^s(R-S(y))^s}\\
&=\frac{\rho^s}{2^s(R-S(T^{n}x))^s}=\frac{\rho^s}{2^s(R-\rho\ga_{n+1}(x))},
\end{split}
\end{equation}
where the last equality holds by the following observation: if $1\le d_{n+1}\le N-2$, then
\begin{align*}
S(T^n x)&=\max\set{T^n x-d_{n+1}R, d_{n+1}R+\rho-T^n x}\\
&=\rho\max\set{T^{n+1}x, 1-T^{n+1}x}=\rho\ga_{n+1}(x);
\end{align*}
if $d_{n+1}=0$, then $S(T^n x)=T^n x=\rho T^{n+1}x=\rho\ga_{n+1}(x)$; and if $d_{n+1}=N-1$, then $S(T^n x)=1-T^n x=\rho(1-T^{n+1}x)=\rho\ga_{n+1}(x)$.
Letting $r\to 0$ in (\ref{eq:feb-19-2}), and then $n\to\f$, we deduce that
\begin{equation}\label{eq:feb-19-3}
\Theta_*^s(\mu, x)=\liminf_{r\to 0}\frac{\mu(B(x, r))}{(2r)^s}\ge \frac{\rho^s}{2^s(R-\rho\liminf_{n\to\f} \ga_n(x))^s}.
\end{equation}

On the other hand, observe that the  equality holds  in (\ref{eq:feb-19-2})  when $r'=R-S(T^{n}x)$, or equivalently, when
$r=\rho^{n}(R-S(T^{n}x)).$ So, let $r_n:=\rho^{n}(R-S(T^{n}x))$. Then $r_n\to 0$ as $n\to\f$. Letting $r\to 0$ along the subsequence $(r_n)$ in (\ref{eq:feb-19-3}) we conclude that
\[
\Theta_*^s(\mu, x)=\frac{\rho^s}{2^s(R-\rho\liminf_{n\to\f}\ga_n(x))^s}=\frac{1}{2^s(R/\rho-\liminf_{n\to\f}\ga_n(x))^s}.
\]
This completes the proof.
\end{proof}

\subsection{Upper density $\Theta^*(\mu, x)$}
Using the same idea as in the previous subsection we are able to determine the pointwise upper density of $\mu$ at each $x\in E$. But the calculation is more involved.
First we consider $x\in f_i([0,1])$ for $i=0$ and $i=N-1$.
\begin{lemma}\label{lem:upper-density-1}
Let $N\ge 2$ and $0<\rho\le 1/N^2$.
\begin{enumerate}[{\rm(i)}]
\item
 Let  $x\in[0, \rho]$. Then for any $\max\set{x, \rho-x}\le r\le 1-x$ we have

\[
\frac{\mu(B(x,r))}{(2r)^s}\le\left\{ \begin{array}{lll}
\frac{\rho^s}{2^s(\rho-x)^s}&\textrm{if}&x\in[0, \rho/2],\\
\max\set{\frac{\rho^s}{2^s x^s}, \frac{1}{2^s(1-x)^s}}&\textrm{if}& x\in[\rho/2, \rho],
\end{array}\right.
\]
where the equality is   attainable.

 \item  Let $x\in[1-\rho, 1]$. Then for any $\max\set{1-x, \rho-(1-x)}\le r\le x$ we have
\[
\frac{\mu(B(x,r))}{(2r)^s}\le\left\{\begin{array}{lll}
 \max\set{\frac{\rho^s}{2^s(1-x)^s}, \frac{1}{2^s x^s}}&\textrm{if}& x\in[1-\rho, 1-\rho/2],\\
 \frac{\rho^s}{2^s (\rho-(1-x))^s}&\textrm{if}& x\in[1-\rho/2, 1],
 \end{array}\right.
\]
where the equality is attainable.

\end{enumerate}

\end{lemma}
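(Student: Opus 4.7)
The strategy is to handle part (i) directly and deduce part (ii) from it via the symmetry $\mu(B(x,r))=\mu(B(1-x,r))$. For (i), fix $x\in[0,\rho]$ and $r\in[\max\{x,\rho-x\},\,1-x]$; since $r\ge x$ forces $x-r\le 0$, we have $B(x,r)\cap[0,1]=[0,x+r)$, reducing the problem to tracking $\mu([0,x+r))$ as $r$ varies. Exactly as in the proof of Lemma \ref{lem:lower-bound}, I would partition the range of $r$ into, for $j=1,\ldots,N-1$, Case I$_j$ when $x+r\in[(j-1)R+\rho,\,jR]$ (so that $B(x,r)\cap E = f_0(E)\cup\cdots\cup f_{j-1}(E)$) and Case II$_j$ when $x+r\in(jR,\,jR+\rho]$.

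In Case I$_j$, Lemma \ref{lem:density-1} gives $\mu(B(x,r))=j\rho^s$, so $\mu(B(x,r))/(2r)^s$ is strictly decreasing in $r$ and is maximised at the left endpoint $r=\max\{x,(j-1)R+\rho-x\}$. In Case II$_j$, Lemmas \ref{lem:density-1} and \ref{lem:density-2} combine to give $\mu(B(x,r))\le j\rho^s+(x+r-jR)^s$; applying Lemma \ref{lem:inequality}(i) with $A=j\rho^s$ and $B=jR-x$ (the hypothesis $\rho^{1-s}A=j\rho<jR-x$ follows from $R>2\rho$, which is a consequence of $\rho\le 1/N^2$) shows this upper bound is strictly increasing in $r$, so is maximised at the right endpoint $r=jR+\rho-x$, where its value $(j+1)\rho^s/(2(jR+\rho-x))^s$ coincides with, and saturates, the Case I$_{j+1}$ left-endpoint value. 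The candidate local maxima of $\mu(B(x,r))/(2r)^s$ over the whole admissible range are therefore $\rho^s/(2\max\{x,\rho-x\})^s$ (from $j=1$), $j\rho^s/(2((j-1)R+\rho-x))^s$ for $j=2,\ldots,N-1$, and $1/(2(1-x))^s$ at $r=1-x$ (where $\mu([0,1))=1$).

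The key collapsing step comes from Lemma \ref{lem:inequality}(ii) with $m=1$ and $t=\rho-x\in[0,\rho]$: the sequence $b_j=(j+1)/(jR+\rho-x)^s$ is strictly increasing for $j\ge 1$. Since $b_{j-1}\rho^s=j\rho^s/((j-1)R+\rho-x)^s$ for $j=2,\ldots,N$ and $b_{N-1}\rho^s=N\rho^s/(1-x)^s=1/(1-x)^s$ (using $N\rho^s=1$), each of the $N-2$ interior candidate values is strictly dominated by $1/(2(1-x))^s$. Hence the global maximum equals $\max\{\rho^s/(2\max\{x,\rho-x\})^s,\,1/(2(1-x))^s\}$. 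For $x\in[0,\rho/2]$ one has $\max\{x,\rho-x\}=\rho-x$, and the first term already dominates the second since $\rho^s/(\rho-x)^s\ge 1/(1-x)^s$ simplifies to $x(1-\rho)\ge 0$; for $x\in[\rho/2,\rho]$ one has $\max\{x,\rho-x\}=x$, and the displayed expression is precisely the claimed bound. Equality is attained at $r=\rho-x$, $r=x$, or $r=1-x$, because at each of these radii $B(x,r)$ equals $[0,\rho)$ or $[0,1)$ so the upper bound used above becomes sharp.

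The main obstacle is really the collapsing argument just described: one needs Lemma \ref{lem:inequality}(i) to verify the Case II upper bound is monotone (which demands checking $R>2\rho$, and this is exactly where the hypothesis $\rho\le 1/N^2$ enters), and one must then notice that a single application of Lemma \ref{lem:inequality}(ii) with $m=1$ is strong enough to collapse all $N-2$ interior local maxima down to the endpoint value $1/(2(1-x))^s$, thereby bypassing what would otherwise be a much harder $N$-dependent algebraic comparison. Everything else, including the symmetry deducing (ii) from (i) and the final two-line comparisons, is bookkeeping.
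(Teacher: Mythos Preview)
Your proof is correct and follows essentially the same approach as the paper's: both reduce (ii) to (i) by symmetry, partition the range of $r$ into the same Cases I$_j$ and II$_j$, use Lemma~\ref{lem:density-2} and Lemma~\ref{lem:inequality}(i) (with $A=j\rho^s$, $B=jR-x$) to handle Case II, and then invoke Lemma~\ref{lem:inequality}(ii) with $m=1$, $t=\rho-x$ to collapse the interior candidate maxima into the endpoint value $1/(2(1-x))^s$. Your verification $\rho^s/(\rho-x)^s\ge 1/(1-x)^s\Leftrightarrow x(1-\rho)\ge 0$ is actually more explicit than what the paper records, and your check $\rho^{1-s}A=j\rho<jR-x$ via $R>2\rho$ is exactly what the paper sketches.
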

\begin{proof}
Since the measure $\mu$ is symmetric on $E$, (ii) can be deduced from (i). In the following we only prove (i).
Let $x\in[0, \rho]$ and $\max\set{x,\rho-x}\le r\le 1-x$. Then $B(x, r)$ nearly contains $[0, \rho]$, and $B(x, r)\subset(-1, 2)$. Note that $R=(1-\rho)/(N-1)$. We consider the following two cases.

Case I. $\max\set{x, (j-1)R+\rho-x}\le  r\le jR-x$ for some $j\in\set{1,2,\ldots, N-1}$. By the same argument as in the proof of Lemma \ref{lem:lower-bound} we have $\mu(B(x,r))=j\rho^s$. So, if $x\le (j-1)R+\rho-x$, then $\max\set{x, (j-1)R+\rho-x}=(j-1)R+\rho-x$, and so
\begin{equation}\label{eq:feb-28-0}
\frac{\mu(B(x, r))}{(2r)^s}\le\frac{j\rho^s}{2^s((j-1)R+\rho-x)^s},
\end{equation}
where the equality holds for $r=(j-1)R+\rho-x$. If $x>(j-1)R+\rho-x$, then $j=1$ and $x>\rho/2$. In this case we have
\begin{equation}\label{eq:feb-28-1}
\frac{\mu(B(x,r))}{(2r)^s}\le \frac{\rho^s}{2^s x^s}
\end{equation}
where the equality holds for $r=x$.

Case II. $jR-x<r\le jR+\rho-x$ for some $j\in\set{1,2,\ldots, N-1}$. Then by Lemma \ref{lem:density-2} one can verify that
\[
\mu(B(x, r))=j\rho^s+\mu([jR, x+r])\le j\rho^s+(x+r-jR)^s.
\]
This implies
{\[
\frac{\mu(B(x,r))}{(2r)^s}\le\frac{1}{2^s}\cdot\frac{j\rho^s+(x+r-jR)^s}{r^s}=:\frac{1}{2^s} g_2(r).
\]
By Lemma \ref{lem:inequality} (i) with $A=j\rho^s$ and $B=jR-x$ one can easily verify   that $\rho^{1-s}A<B$, and thus $g_2$ is   strictly increasing in $(jR-x, jR+\rho-x]$.} So, for any $jR-x<r\le jR+\rho-x$,
\begin{equation}\label{eq:feb-28-2}
\frac{\mu(B(x,r))}{(2r)^s}\le\frac{1}{2^s}g_2(jR+\rho-x)=\frac{(j+1)\rho^s}{2^s(jR+\rho-x)^s},
\end{equation}
where the equality holds for $r=jR+\rho-x$.

{Note by Lemma \ref{lem:inequality} (ii)   that the sequence
\[
\hat b_j:=\frac{j+1}{(jR+\rho-x)^s},\quad j=1,\ldots, N-1
\]
is  strictly increasing in $j$.} Therefore, by  (\ref{eq:feb-28-0}) and (\ref{eq:feb-28-2}) it follows that for $x\in[0, \rho/2]$,
\begin{align*}
\frac{\mu(B(x,r))}{(2r)^s}&\le\max\set{\frac{\rho^s}{2^s(\rho-x)^s}, \frac{N\rho^s}{2^s((N-1)R+\rho-x)^s}}\\
&=\max\set{\frac{\rho^s}{2^s(\rho-x)^s}, \frac{1}{2^s(1-x)^s}} =\frac{\rho^s}{2^s(\rho-x)^s},
\end{align*}
where  we have used $\rho^s=1/N$ and $R=(1-\rho)/(N-1)$. Moreover, by (\ref{eq:feb-28-1}) and (\ref{eq:feb-28-2}) it follows that for $x\in[\rho/2, \rho]$ we have
\[
\frac{\mu(B(x,r))}{(2r)^s}\le \max\set{\frac{\rho^s}{2^sx^s}, \frac{1}{2^s(1-x)^s}}.
\]
This completes the proof.
\end{proof}

In the next lemma we consider the upper bound of  $\mu(B(x,r))/(2r)^s$ for $x\in f_k([0,1])$ with $k=1,\ldots, N-2$.
\begin{proposition}\label{prop:upper-density}
Let $N\ge 2$ and $0<\rho\le 1/N^2$. For $k\in\set{0,\ldots, N-1}$ set $\hat k:=\min\set{k, N-1-k}$.
\begin{enumerate}
\item[{\rm(i)}] If $x\in[kR, kR+\rho/2]$ for some $k=1,\ldots, N-2$, then for any $kR+\rho-x\le r\le \max\set{x, 1-x}$ we have
\[
\frac{\mu(B(x, r))}{(2r)^s}\le \max\set{\frac{\rho^s}{2^s(kR+\rho-x)^s}, \frac{(1+2\hat k)\rho^s}{2^s(\hat k R+ (kR+\rho-x) )^s},\frac{1}{2^s(\max\set{x, 1-x})^s}},
\]
where  the equality is attainable.

\item[{\rm(ii)}] If $x\in[kR+\rho/2, kR+\rho]$ for some $k=1,\ldots, N-2$, then for any $x-kR\le  r\le \max\set{x, 1-x}$ we have
\[
\frac{\mu(B(x,r))}{(2r)^s}\le \max\set{\frac{\rho^s}{2^s(x-kR)^s}, \frac{(1+2\hat k)\rho^s}{2^s(\hat k R+ (x-kR) )^s},\frac{1}{2^s(\max\set{x, 1-x})^s}},
\]
where the equality is attainable.
\end{enumerate}

\end{proposition}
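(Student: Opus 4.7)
Part (ii) follows from part (i) by the $\mu$-symmetry $\mu(B(x,r))=\mu(B(1-x,r))$, since $x\in[kR+\rho/2,kR+\rho]$ corresponds to $1-x\in[(N-1-k)R,(N-1-k)R+\rho/2]$ with $\widehat{N-1-k}=\hat k$ and $\max\{x,1-x\}=\max\{1-x,x\}$. Within part (i), the same symmetry reduces us to the case $k\le(N-1)/2$, so that $\hat k=k$ and $\max\{x,1-x\}=1-x$; the remaining case $k>(N-1)/2$ is handled by relabelling.

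For the reduced case $x\in[kR,kR+\rho/2]$ with $\hat k=k$, my plan is to mirror the proof of Lemma \ref{lem:upper-density-1}. Partition the admissible range $[kR+\rho-x,1-x]$ of $r$ into finitely many subintervals according to which level-$1$ gap or basic interval contains each of the endpoints $x-r$ and $x+r$. On each subinterval, Lemma \ref{lem:density-1} together with the upper bound $\mu([0,t])\le t^s$ from Lemma \ref{lem:density-2} yields an estimate of the form
\[
\mu(B(x,r))\le m\rho^s+(x+r-iR)^s+(jR+\rho-(x-r))^s,
\]
where $m$ counts the level-$1$ basic intervals contained in $B(x,r)$ and the last two terms contribute only when the corresponding endpoint lies strictly inside a basic interval $f_i$ or $f_j$. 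Dividing by $(2r)^s$ and applying Lemma \ref{lem:inequality}(i) separately to each fractional boundary contribution (exactly as in the proof of Lemma \ref{lem:upper-density-1}, Case~II), the resulting ratio is monotone in $r$ on each subinterval, so its supremum is attained at an endpoint.

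Sweeping through the finite list of endpoint radii and using Lemma \ref{lem:inequality}(ii) to eliminate dominated candidates coming from sequences of the form $j/(jR-t)^s$ and $(1+mj)/(jR+t)^s$, one verifies that only three radii can realize the overall maximum: $r_1=kR+\rho-x$, where $B(x,r_1)$ nearly-covers only $f_k$ with mass $\rho^s$, giving the first term of the claim; $r_2=2kR+\rho-x=\hat kR+(kR+\rho-x)$, where $B(x,r_2)$ contains the symmetric block $f_0\cup f_1\cup\cdots\cup f_{2k}$ of mass $(1+2\hat k)\rho^s$, giving the second term; and $r_3=1-x=\max\{x,1-x\}$, where $B(x,r_3)$ contains all of $E$ with mass $1$, giving the third term. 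Equality at each of these three radii is attainable, so the supremum equals the maximum of the three claimed values.

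The main obstacle is the combinatorial case analysis: because $B(x,r)$ is asymmetric about $x$, one must simultaneously track the index $j_+$ of the rightmost basic interval met by $B(x,r)$ and the index $j_-$ of the leftmost, and then eliminate every ``mixed'' candidate $(j_+,j_-)$ in which the two sides are unbalanced. The hypothesis $x\le kR+\rho/2$ is crucial here, since it forces the left endpoint to lag behind the right and thereby privileges the symmetric radius $r_2$. Ruling out the remaining asymmetric extrema requires repeated appeals to the inequality $\rho/R\le 1/(N+1)$ from Lemma \ref{lem:inequality} to compare density ratios across different $(j_+,j_-)$ pairs; this bookkeeping, rather than any single hard estimate, is the technical heart of the argument.
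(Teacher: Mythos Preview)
Your overall strategy matches the paper's proof closely: partition the range of $r$ according to which level-$1$ basic intervals the ball $B(x,r)$ meets, bound $\mu(B(x,r))$ on each piece using Lemmas \ref{lem:density-1} and \ref{lem:density-2}, show monotonicity in $r$ via Lemma \ref{lem:inequality}(i), and then eliminate dominated endpoint candidates using Lemma \ref{lem:inequality}(ii) to arrive at the three radii $r_1, r_2, r_3$ you identify. The paper carries this out in two steps: Step~I fixes $0\le j\le k-1$ and analyses $r\in[(k+j)R+\rho-x,(k+j+1)R+\rho-x]$ through four sub-cases tracking the left and right endpoints simultaneously, and Step~II combines across $j$ and then handles the residual range $r>2kR+\rho-x$ (where $x-r<0$) by the same method as Lemma \ref{lem:upper-density-1}.

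There is, however, one genuine slip in your reduction. The symmetry $x\mapsto 1-x$ does \emph{not} reduce part (i) with $k>(N-1)/2$ to part (i) with smaller $k$: it sends $x\in[kR,kR+\rho/2]$ to $1-x\in[(N-1-k)R+\rho/2,(N-1-k)R+\rho]$, which is the setting of part (ii), not (i). Combined with your reduction of (ii) to (i), this becomes circular: proving (i) for $k\le(N-1)/2$ yields (ii) only for $N-1-k\ge(N-1)/2$, and the remaining cases of (i) and (ii) map into each other under the symmetry without ever landing on something already established. The paper avoids this by treating the two halves of (i), namely $x\le 1/2$ and $x>1/2$, separately; in the second half $\hat k=N-1-k$, the range of $r$ becomes $[kR+\rho-x,x]$, and the third term of the maximum involves $x$ rather than $1-x$, but the argument is otherwise parallel. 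Your ``relabelling'' remark therefore needs to become an explicit (if brief) second pass through the $x>1/2$ case.
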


\begin{proof}
Note by the symmetry of $\mu$ that (ii) can be deduced from (i). In the following we only prove (i).
Let $x\in[kR, kR+\rho/2]$ and $kR+\rho-x\le r\le \max\set{x, 1-x}$. Then the ball $B(x, r)$ contains the basic interval $f_k([0, 1])$ and is contained in $(-1,2)$. We first assume $x\le 1/2$. Then $1\le k\le (N-1)/2$, and thus $\hat k=k$. We will prove in the following two steps that
\begin{equation}\label{eq:mar-2-1}
\frac{\mu(B(x,r))}{(2r)^s}\le\max\set{\frac{\rho^s}{2^s(kR+\rho-x)^s}, \frac{(1+2k)\rho^s}{2^s(2kR+\rho-x)^s}, \frac{1}{2^s(1-x)^s}}
\end{equation}
for any $x\in[0, 1/2]\cap[kR, kR+\rho/2]$ and $kR+\rho-x\le r\le 1-x$.

\medskip

{\bf Step I}. In this step we will prove that for each $0\le j\le k-1$ and for any $(k+j)R+\rho-x\le r\le (k+j+1)R+\rho-x$ we have
\begin{equation}\label{eq:step1}
\frac{\mu(B(x,r))}{(2r)^s}\le \max\set{\frac{(1+2j)\rho^s}{2^s(jR+(kR+\rho-x))^s}, \frac{(1+2(j+1))\rho^s}{2^s((j+1)R+(kR+\rho-x))^s}},
\end{equation}
where the equality is attainable.
Take $0\le j\le k-1$. Let $(k+j)R+\rho-x\le r\le (k+j+1)R+\rho-x$.
Then   $B(x, r)$ contains   the basic intervals $f_i([0,1])$ with $k-j\le i\le k+j$ and it might have intersection with $f_{k-j-1}([0, 1])$ and $f_{k+j+1}([0,1])$ (see Figure \ref{fig:2}). Since $x\in[kR, kR+\rho]$, we can partition the interval $[(k+j)R+\rho-x, (k+j+1)R+\rho-x]$ as follows:
\begin{equation}\label{eq:mar-29-0}
\begin{split}
(k+j)R+\rho-x&\le x-(k-j-1)R-\rho\\
&\le (k+j+1)R-x\\
&\le x-(k-j-1)R\le (k+j+1)R+\rho-x.
\end{split}
\end{equation}
Now we prove (\ref{eq:step1}) by considering the following four cases according to the partition.

	 \begin{figure}[h!]
\begin{center}
\begin{tikzpicture}[
    scale=7,
    axis/.style={very thick},
    important line/.style={thick},
    dashed line/.style={dashed, thin},
    pile/.style={thick, ->, >=stealth', shorten <=2pt, shorten
    >=2pt},
    every node/.style={color=black}
    ]

  \draw[axis] (-0.5,-0.2)  -- (-0.3,-0.2) node(xline)[right]{};
 \node[] at (-0.4, -0.15){$f_{k-j-1}$};
 \node[] at (-0.5, -0.25){$(k-j-1)R$};

      \draw[axis,blue] (-0.1,-0.2)  -- (0.1,-0.2) node(xline)[right]{};
 \node[] at (0, -0.15){$f_{k-j}$};
 \node[] at (-0.12, -0.25){$(k-j)R$};
 \node[blue] at (0.25, -0.2){$\cdots$};

     \draw[axis,blue] (0.4,-0.21)  -- (0.6,-0.21) node(xline)[right]{};
 \node[] at (0.5, -0.15){$f_k$};
  \node[] at (0.38, -0.25){$kR$};
  \draw[red, pile](0.45, -0.1)--(0.45,-0.21);
  \node[red]at(0.45,-0.08){$x$};

 \node[blue] at (0.75, -0.2){$\cdots$};

    \draw[axis,blue] (0.9,-0.2)  -- (1.1,-0.2) node(xline)[right]{};
 \node[] at (1, -0.15){$f_{k+j}$};
  \node[] at (0.88, -0.25){$(k+j)R$};

    \draw[axis] (1.3,-0.2)  -- (1.5,-0.2) node(xline)[right]{};
 \node[] at (1.4, -0.15){$f_{k+j+1}$};
  \node[] at (1.3, -0.25){$(k+j+1)R$};

\end{tikzpicture}
\end{center}
\caption{The ball $B(x, r)$ with $x\in[kR, kR+\rho/2]$ contains $f_{i}([0,1])$ with $k-j\le i\le k+j$ and might intersect $f_{k-j-1}([0,1])$ and $f_{k+j+1}([0,1])$. The left and right  endpoints of each basic interval $f_i([0, 1])$ are $iR$ and $iR+\rho$ respectively. }\label{fig:2}
\end{figure}
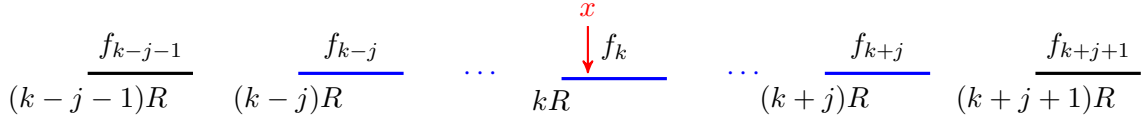

Case I. $(k+j)R+\rho-x\le r\le x-(k-j-1)R-\rho$. Then the ball $B(x, r)$ contains only the basic intervals $f_{i}([0,1])$ with $k-j\le i\le k+j$ and it has no intersect with any other basic interval of level-$1$. So,
\[
\frac{\mu(B(x,r))}{(2r)^s}=\frac{(1+2j)\rho^s}{(2r)^s}\le\frac{(1+2j)\rho^s}{2^s((k+j)R+\rho-x)^s},
\]
where the equality holds for $r=(k+j)R+\rho-x$.

Case II. $x-(k-j-1)R-\rho\le r\le (k+j+1)R-x$. Then the ball $B(x,r)$ contains the basic interval $f_i([0,1])$ with $k-j\le i\le k+j$ and it intersects $f_{k-j-1}([0,1])$. But it has no intersect with any other basic interval of level-$1$. This gives
\begin{align*}
\mu(B(x,r))&=(1+2j)\rho^s+\mu([x-r, (k-j-1)R+\rho])\\
&\le (1+2j)\rho^s+((k-j-1)R+\rho-x+r)^s,
\end{align*}
where the inequality follows by Lemma \ref{lem:density-2}. So,
\begin{equation}\label{eq:mar-29-1}
\frac{\mu(B(x,r))}{(2r)^s}\le\frac{(1+2j)\rho^s+((k-j-1)R+\rho-x+r)^s}{(2r)^s}=:\frac{1}{2^s}\hat g_1(r).
\end{equation}
 Let $A=(1+2j)\rho^s$ and $B=x-(k-j-1)R-\rho$. Note by Lemma \ref{lem:inequality} that $0<\rho/R\le 1/(N+1)$. Then by using $x\ge kR$   one can easily verify that $\rho^{1-s}A<B$. So by (\ref{eq:mar-29-0}) and Lemma \ref{lem:inequality} (i)  it follows that  the function $\hat g_1$ is strictly increasing in
$[x-(k-j-1)R-\rho, (k+j+1)R-x]$.   Therefore,
\[
\frac{\mu(B(x,r))}{(2r)^s}\le\frac{1}{2^s}\hat g_1(r)\le \frac{1}{2^s}\hat g_1\big((k+j+1)R-x\big)=\frac{(1+2j)\rho^s+(2kR+\rho-2x)^s}{2^s((k+j+1)R-x)^s}.
\]

Case III. $(k+j+1)R-x\le r\le x-(k-j-1)R$. Then the ball $B(x,r)$ not only contains the basic interval $f_{i}([0, 1])$ with $k-j\le i\le k+j$ but also intersect $f_{k-j-1}([0,1])$ and $f_{k+j+1}([0,1])$. However, it has no intersect with any other basic intervals of level-$1$. So, by Lemmas \ref{lem:density-2} and \ref{lem:lower-bound} it follows that
\begin{align*}
\frac{\mu(B(x, r))}{(2r)^s}&\le \frac{(1+2j)\rho^s+((k-j-1)R+\rho-x+r)^s+(x+r-(k+j+1)R)^s}{(2r)^s}\\
&=:\frac{1}{2^s}(\hat g_1(r)+\hat g_2(r)),
\end{align*}
where $\hat g_1$ is defined in (\ref{eq:mar-29-1}) and
\[
\hat g_2(r):=\frac{(x+r-(k+j+1)R)^s}{r^s}.
\]
Note by Case II that $\hat g_1(r)$ is increasing in $[x-(k-j-1)R-\rho, x-(k-j-1)R]$. Furthermore, by Lemma \ref{lem:inequality} (i) one can easily show that $\hat g_2(r)$ is increasing in $[(k+j+1)R-x, (k+j+1)R-x+\rho]$. Therefore, by (\ref{eq:mar-29-0}) it follows that
\[
\frac{\mu(B(x,r))}{(2r)^s}\le\frac{1}{2^s}\big(\hat g_1(x-(k-j-1)R)+\hat g_2(x-(k-j-1)R)\big)=\frac{(2+2j)\rho^s+(2x-2k R)^s}{2^s(x-(k-j-1)R)^s}
\]
for any $(k+j+1)R-x\le r\le x-(k-j-1)R$.

Case IV. $x-(k-j-1)R\le r\le (k+j+1)R+\rho-x$. Then the ball contains the basic intervals $f_{i}([0, 1])$ with $k-j-1\le i\le k+j$, and  it  intersects the basic interval $f_{k+j+1}([0, 1])$. But it has no intersection with any other basic intervals of level-$1$. So, by Lemma \ref{lem:density-2} it follows that
\[
\frac{\mu(B(x, r))}{(2r)^s}\le \frac{(2+2j)\rho^s+(x+r-(k+j+1)R)^s}{(2r)^s}=:\frac{1}{2^s}\hat g_3(r).
\]
Let $A=(2+2j)\rho^s$ and $B=(k+j+1)R-x$. Note by Lemma \ref{lem:inequality} that $\rho/R\in(0, 1/(N+1)]$. Then using $x\in[kR, kR+\rho/2]$ one can verify that $\rho^{1-s}A<B$. So, by   (\ref{eq:mar-29-0}) and Lemma \ref{lem:inequality} (i) it follows that $\hat g_3(r)$ is strictly increasing in $[x-(k-j-1)R, (k+j+1)R-x+\rho]$. Therefore,
\[
\frac{\mu(B(x, r))}{(2r)^s}\le\frac{1}{2^s}\hat g_3(r)\le \frac{1}{2^s}\hat g_3((k+j+1)R+\rho-x)=\frac{(1+2(j+1))\rho^s}{2^s((k+j+1)R+\rho-x)^s},
\]
where the equality holds for $r=(k+j+1)R+\rho-x$.

Observe that $\hat g_1$ and $\hat g_1+\hat g_2$ coincide at $r=(k+j+1)R-x$, and $\hat g_1+\hat g_2$ and $\hat g_3$ coincide at $r=x-(k-j-1)R$.
Therefore, (\ref{eq:step1}) follows by Cases I--IV.

\medskip

{\bf Step II.} Note that $kR+\rho-x\in[0, \rho]$. Then  by Lemma \ref{lem:inequality} (ii) it gives that   the sequence
\[
\hat b_j:=\frac{1+2j}{(jR+(kR+\rho-x))^s},\quad j\ge 1
\]
is strictly increasing. So, by (\ref{eq:step1}) in Step I it follows that for any $r\in[kR+\rho-x, 2k R+\rho-x]$,
\begin{equation}\label{eq:mar-3-4}
\frac{\mu(B(x, r))}{(2r)^s}\le \max\set{\frac{\rho^s}{2^s(kR+\rho-x)^s}, \frac{(1+2k)\rho^s}{2^s(k R+(kR+\rho-x))^s}},
\end{equation}
where the equality is attainable. If $k=(N-1)/2$, then $2k R+\rho-x=1-x$ and $(1+2k)\rho^s=N\rho^s=1$, and therefore (\ref{eq:mar-3-4}) gives (\ref{eq:mar-2-1}).

In the following it suffices to consider $k<(N-1)/2$. Note that for $2kR+\rho-x<r\le 1-x$ we have $x-r<0$. Then by the same argument as in the proof of Lemma \ref{lem:upper-density-1} and using   that the sequence
\[
\tilde b_j:=\frac{(1+2k+j)\rho^s}{2^s((k+j)R+(kR+\rho-x))^s},\quad j\ge 1
\]
is strictly increasing, it follows that for any $2kR+\rho-x\le r\le 1-x$ we have
\begin{equation}\label{eq:mar-3-5}
\begin{split}
\frac{\mu(B(x,r))}{(2r)^s}&\le \max\set{\frac{(1+2k)\rho^s}{2^s(kR+(kR+\rho-x))^s}, \frac{N\rho^s}{2^s((N-1)R+\rho-x)^s}}\\
&=\max\set{\frac{(1+2k)\rho^s}{2^s(kR+(kR+\rho-x))^s}, \frac{1}{2^s(1-x)^s}},
\end{split}
\end{equation}
where the equality is attainable.
Therefore, (\ref{eq:mar-2-1}) follows by (\ref{eq:mar-3-4}) and (\ref{eq:mar-3-5}).

\medskip

If $x\in(1/2, 1]$, then $k> (N-1)/2$ and thus $\hat k=N-1-k$. By the same argument as above we can prove that for $kR+\rho-x\le r\le (k+\hat k)R+\rho-x$,
\begin{equation}\label{eq:mar-3-6}
\begin{split}
\frac{\mu(B(x,r))}{(2r)^s}&\le \max\set{\frac{\rho^s}{2^s(kR+\rho-x)^s}, \frac{(1+2\hat k)\rho^s}{2^s(\hat k R+(kR+\rho-x))^s}  }.
\end{split}
\end{equation}
Since  $x>1/2$, for $1-x< r\le x$ we have $x+r>1$. By the same argument as in Lemma \ref{lem:upper-density-1} it follows that
\begin{equation}\label{eq:may-6-1}
\frac{\mu(B(x,r))}{(2r)^s}\le \max\set{ \frac{(1+2\hat k)\rho^s}{2^s(\hat k R+(kR+\rho-x))^s}, \frac{1}{2^s x^s}}
\end{equation}
for any $1-x\le r\le x$. Note that $(k+\hat k)R+\rho-x=(N-1)R+\rho-x=1-x$.
Therefore, by (\ref{eq:mar-3-6}) and (\ref{eq:may-6-1}) it follows that for any $x\in[kR, kR+\rho/2]\cap(1/2, 1]$ and  $kR+\rho-x\le r\le x$,
\begin{equation}\label{eq:mar-3-7}
\frac{\mu(B(x,r))}{(2r)^s}\le\max\set{\frac{\rho^s}{2^s(kR+\rho-x)^s}, \frac{(1+2\hat k)\rho^s}{2^s(\hat k R+(kR+\rho-x))^s},  \frac{1}{2^s x^s}},
\end{equation}
where the equality is attainable.

 Hence, by (\ref{eq:mar-2-1}) and (\ref{eq:mar-3-7}) we prove (i).
\end{proof}
By Lemma  \ref{lem:upper-density-1}  it follows that Proposition \ref{prop:upper-density} also holds for  $k=0$ and $N-1$. So, if $x\in[kR, kR+\rho]$ for some $k\in\set{0, 1,\ldots, N-1}$ and $\max\set{kR+\rho-x, x-kR}\le r\le \max\set{x, 1-x}$, then the ball $B(x, r)$ contains at least one basic interval $f_k([0, 1])$ and it does not contain $[0,1]$. In this case we conclude   by Lemmas \ref{lem:upper-density-1} and  Proposition \ref{prop:upper-density}   that
\begin{equation}\label{eq:mar-4-1}
\frac{\mu(B(x, r))}{(2r)^s}\le \max\set{\frac{\rho^s}{2^s(M(k,x))^s}, \frac{(1+2\hat k)\rho^s}{2^s(\hat kR+M(k,x))^s}, \frac{1}{2^s(\max\set{x, 1-x})^s}},
\end{equation}
where the equality is attainable, and $M(k,x):=\max\set{kR+\rho-x, x-kR}$.

\begin{proof}[Proof of Theorem \ref{main:densities} (ii)]
The proof is similar to (i).
Take $x=\pi(d_1d_2\ldots)\in E$ and $r\in(0, \rho)$. Then there exists $n\ge 0$ such that $B(x, r)$ contains the level-$(n+1)$ basic interval $f_{d_1\ldots d_{n+1}}([0, 1])$ but it does not contain the basic interval $f_{d_1\ldots d_{n}}([0,1])$. This implies that
\[
(-1, 2)\supseteq T^{n}(B(x,r) )=(f_{d_1\ldots d_{n}})^{-1}(B(x,r) )\supseteq f_{d_{n+1}}([0,1]),
\]
but $T^{n}(B(x,r) )$ does not contain    $[0,1]$.

Let $y=T^{n}x=\pi(d_{n+1}d_{n+2}\ldots)$ and let $r'=\rho^{-n}r$. Then $y\in f_{d_{n+1}}([0, 1])$, $\rho/2<r'<1$ and $T^{n}(B(x,r))=B(y,r')$. By (\ref{eq:mar-4-1}) it follows that
\begin{equation}\label{eq:mar-4-2}
\begin{split}
 &\frac{\mu(B(x,r))}{(2r)^s}=\frac{\mu(B(y, r'))}{(2r')^s}\\
 &\le \max\set{\frac{\rho^s}{2^s(M(d_{n+1}, y))^s}, \frac{(1+2\hat d_{n+1})\rho^s}{2^s(\hat d_{n+1}R+M(d_{n+1}, y))^s}, \frac{1}{2^s(\max\set{y, 1-y})^s}}\\
 &=\max\left\{\frac{\rho^s}{2^s(M(d_{n+1}, T^n x))^s}, \frac{(1+2\hat d_{n+1})\rho^s}{2^s(\hat d_{n+1}R+M(d_{n+1}, T^n x))^s},\right.\\
 &\hspace{8cm}\left. \frac{1}{2^s(\max\set{T^n x, 1-T^n x})^s}\right\}.
\end{split}
\end{equation}
Observe that
$
T^{n}x-d_{n+1} R= \rho T^{n+1}(x).
$
Then
\begin{align*}
M(d_{n+1}, T^n x)&=\max\set{ T^nx-d_{n+1}R, d_{n+1}R+\rho-T^n x}\\
&=\rho\max\set{T^{n+1} x, 1-T^{n+1} x}=\rho\eta_{n+1}(x).
\end{align*}
Substituting this in (\ref{eq:mar-4-2}) gives that
\[
\frac{\mu(B(x,r))}{(2r)^s}\le \max\set{\frac{1}{2^s(\eta_{n+1}(x))^s}, \frac{ 1+2\hat d_{n+1} }{2^s(\hat d_{n+1}R/\rho+\eta_{n+1}(x))^s}, \frac{1}{2^s(\eta_n(x))^s}}.
\]
Letting $r\to 0$ in the above equation, and then $n\to\f$, we obtain  that
\begin{equation}\label{eq:mar-4-3}
\begin{split}
\Theta^{*s}(\mu, x)&=\limsup_{r\to 0}\frac{\mu(B(x,r))}{(2r)^s}\\
&\le  \limsup_{n\to\f}\max\set{\frac{1}{2^s(\eta_{n+1}(x))^s}, \frac{ 1+2\hat d_{n+1} }{2^s(\hat d_{n+1}R/\rho+\eta_{n+1}(x))^s}, \frac{1}{2^s(\eta_n(x))^s}}\\
&= \max\set{\frac{1}{2^s(\liminf_{n\to\f}\eta_n(x))^s}, \limsup_{n\to\f}\frac{1+2\hat d_n}{2^s(\hat d_n R/\rho+\eta_n(x))^s}}.
\end{split}
\end{equation}

Observe that the equality in (\ref{eq:mar-4-2}) is attainable. This implies that the equality holds in (\ref{eq:mar-4-3}),   completing  the proof.
\end{proof}

\subsection{Typical values of the densities} In the following   we will prove Theorem \ref{main:densities} (iii) for  the typical pointwise densities of $\mu$. First we need the following upper bound.
\begin{lemma}\label{lem:upper-bound-density}
If $N=2m$ for some $m\ge 1$, then
\[
\Theta^{*s}(\mu, x)\le \frac{1}{(NR)^s}
\]
for any $x\in E$.
\end{lemma}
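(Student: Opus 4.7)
The plan is to invoke Theorem \ref{main:densities}(ii), which expresses $\Theta^{*s}(\mu, x)$ as the maximum of two terms, and to bound each by $1/(NR)^s$ using the parity $N = 2m$.

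For the first term $1/(2^s(\liminf_n \eta_n(x))^s)$, I would establish the geometric fact that $1/2$ lies in the central gap of $E$. Indeed, since $(2m-1)R = 1-\rho$, a direct calculation gives $1 - ((m-1)R + \rho) = mR$, so the gap $((m-1)R + \rho, mR)$ of $E$ is symmetric about $1/2$. Therefore every $y \in E$ satisfies either $y \leq (m-1)R + \rho$ or $y \geq mR$, and in both cases $\max\{y, 1-y\} \geq mR$. Applied to $y = T^n x$, this yields $\eta_n(x) \geq mR = NR/2$ for every $n$, so the first term is at most $1/(2^s(NR/2)^s) = 1/(NR)^s$.

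For the second term, since $N-1 = 2m-1$ is odd, $\hat d_n = \min\{d_n, N-1-d_n\}$ satisfies $\hat d_n \leq m-1$. Combined with $\eta_n \geq mR$ (and monotonicity of the expression in $\eta_n$), the required bound reduces to the pointwise estimate
\[
(1+2j)(2mR)^s \leq 2^s(jR/\rho + mR)^s, \qquad j \in \{0, 1, \ldots, m-1\}.
\]
Substituting $\rho = N^{-1/s} = (2m)^{-t}$ with $t := 1/s \geq 2$ (which is precisely the hypothesis $\rho \leq 1/N^2$) and taking $1/s$-th powers, this in turn becomes the elementary inequality $(1+2j)^t - 1 \leq j \cdot 2^t m^{t-1}$, which is trivial for $j = 0$.

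The main obstacle is this last inequality, which is sharp: at $t = 2$ and $j = m-1$, both sides equal $4m(m-1)$. My plan is to write
\[
F(t) := \frac{(1+2j)^t - 1}{j \cdot 2^t m^{t-1}} = \frac{m}{j}(a^t - b^t), \qquad a := \frac{1+2j}{2m}, \quad b := \frac{1}{2m},
\]
verify $F(2) = (j+1)/m \leq 1$ directly, and then show $F$ is non-increasing on $[2, \infty)$. A short computation gives $F'(t) \leq 0 \iff (1+2j)^t \geq \ln(2m)/\ln(2m/(1+2j))$, whose left side is increasing in $t$; so it suffices to verify this at $t = 2$. Using the second-order Taylor bound $\ln(1 + 1/(2m-1)) \geq 1/(2m-1) - 1/(2(2m-1)^2)$, the worst case $j = m-1$ reduces to the elementary estimate $2m - 3/2 \geq \ln(2m)$, valid for all $m \geq 2$; the cases $j < m-1$ follow from the same bound applied with $1+2j \leq 2m-1$.
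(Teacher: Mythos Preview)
Your approach is essentially correct and parallels the paper's: both first bound $\eta_n(x)\ge mR=NR/2$ via the central gap (handling the first term in Theorem~\ref{main:densities}(ii)), then reduce the second term to an elementary inequality, verify it at the boundary $s=1/2$ (equivalently $\rho=1/N^2$, your $t=2$), and finish by a monotonicity argument. The organizational difference is that the paper first shows the expression $(1+2j)/(2jR/\rho+NR)^s$ is \emph{increasing in $j$} (in the spirit of Lemma~\ref{lem:inequality}(ii)), reducing everything to the single case $j=m-1$, and then proves $\phi(\rho):=(1+(N-2)/(N\rho))^s\ge N-1$ by showing $\phi$ is decreasing on $(0,1/N^2]$ with $\phi(1/N^2)=N-1$. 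You instead keep $j$ fixed and argue that $F_j(t)=\tfrac{m}{j}(a^t-b^t)$ is non-increasing on $[2,\infty)$.

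Your last sentence, however, is a genuine gap. The Taylor bound $\ln(1+x)\ge x-x^2/2$ is only useful when $x$ is small; for $j<m-1$ the ratio $2m/(1+2j)$ can be far from $1$ (e.g.\ $j=1$ and $m$ large), so ``the same bound'' does not apply and gives nothing. The underlying claim $(1+2j)^2\ge \ln(2m)/\ln(2m/(1+2j))$ \emph{is} nonetheless true for all $1\le j\le m-1$, but it needs its own argument. One clean route: the function $g(u):=u^2\ln(2m/u)$ satisfies $g'(u)=u(2\ln(2m/u)-1)$, hence is unimodal on $[1,2m]$ with maximum at $u=2m/\sqrt{e}$; therefore its minimum on the odd integers $3\le u\le 2m-1$ is attained at an endpoint, and both
\[
g(3)=9\ln(2m/3)\ge \ln(2m)\quad(\text{i.e.\ }(2m)^8\ge 3^9,\ m\ge 2)
\]
and $g(2m-1)\ge 2m-\tfrac32\ge \ln(2m)$ (your Taylor estimate) are elementary. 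Alternatively, simply adopting the paper's monotonicity-in-$j$ reduction to $j=m-1$ eliminates the issue altogether and shortens the proof.
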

\begin{proof}
Observe that for any $x\in E$,
\[
\eta_n(x)=\max\set{T^n x, 1-T^n x}\ge \min_{y\in E}\max\set{y, 1-y}= mR=\frac{N}{2}R.
\]
So, by Theorem \ref{main:densities} (ii) it suffices to prove
\begin{equation}
  \label{eq:apr-25-2}
  \frac{1+2 j}{(2 j R/\rho+N R)^s }\le \frac{1}{(NR)^s}\quad \textrm{for any }1\le j\le m-1.
\end{equation}
By the same way as in the proof of Lemma \ref{lem:inequality} (ii) one can verify that the sequence
\[
b_j':=\frac{1+2 j}{(2 j R/\rho+N R)^s }, \quad j\ge 1
\]
 is strictly increasing. So, to prove (\ref{eq:apr-25-2}) we only need to prove it for $j=m-1$, i.e.,
 \[
 \frac{N-1}{((N-2)R/\rho+NR)^s}\le \frac{1}{(NR)^s}.
 \]

Note that $s=-\log N/\log \rho$. Rearranging the above inequality gives
\begin{equation}\label{eq:apr-25-3}
N-1\le \left(1+\frac{N-2}{N\rho}\right)^{-\frac{\log N}{\log \rho}}=:\phi(\rho).
\end{equation}
Clearly, (\ref{eq:apr-25-3}) holds for $N=2$.  When $N\ge 3$, note that $\phi(1/N^2)=N-1$. So it suffices to prove that
  the function
$\rho\mapsto \phi(\rho)$
is strictly decreasing on $(0, 1/N^2]$. Write $t:=1/\rho$. Then $\phi(\rho)$ is strictly decreasing if and only if
\begin{equation}
  \label{eq:apr-29-1}
  \phi_1(t)=\frac{1}{\ln t}\ln\left(1+\frac{N-2}{N}t\right)\quad\textrm{ is strictly increasing on }[N^2, +\f),
\end{equation}
where `$\ln$' is the logarithm with the natural base $e$.
Taking the derivative of $\phi_1$ gives that $\phi_1'(t)>0$ if and only if
\[
\phi_2(t)=\frac{N-2}{N}t\ln t-\left(1+\frac{N-2}{N}t\right)\ln\left(1+\frac{N-2}{N}t\right)>0.
\]
Since $t\ge N^2$, one can easily verify that $\phi_2$ has positive derivative on $[N^2, +\f)$. So,
\[
\phi_2(t)\ge \phi_2(N^2)=2\left(N(N-2)\ln N-(N-1)^2\ln(N-1)\right).
\]
Therefore, to prove (\ref{eq:apr-29-1}) it suffices to prove
\[
\frac{N(N-2)}{(N-1)^2}\ge \frac{\ln(N-1)}{\ln N}.
\]
Using one minus both sides of the above inequality   it follows that
\[
\frac{1}{(N-1)^2}\le \frac{\ln(1+\frac{1}{N-1})}{\ln N}=\frac{\ln(1+\frac{1}{N-1})^N}{N\ln N}.
\]
Since $(1+\frac{1}{N-1})^N$ decreases to $e$ as $N\to\f$, we have $\ln(1+\frac{1}{N-1})^N>1$, and thus it suffices to prove
\[
N\ln N\le (N-1)^2\quad\textrm{for all }N\ge 3.
\]
This can be easily verified by simple calculation.

 Therefore, we establish (\ref{eq:apr-25-3}), and thus (\ref{eq:apr-25-2}). This completes the proof.
\end{proof}

\begin{proof}[Proof of Theorem \ref{main:densities} (iii)]
First we consider the typical value for $\Theta_*^s(\mu, x)$. By (i)    it suffices to prove
\begin{equation}\label{eq:mar-5-1}
\liminf_{n\to\f}\ga_n(x)=0\quad\textrm{for }\mu-\textrm{almost every }x\in E.
\end{equation}
For $\ell\ge 1$ let
 \begin{align*}
 A_\ell&:=\bigcap_{k=0}^\f\set{\pi(d_1d_2\ldots)\in E: d_{k\ell+1}\ldots d_{(k+1)\ell}\ne 0^\ell},\\
 \end{align*}
Then  for any $x\in E\setminus\bigcup_{\ell=1}^\f A_\ell$ its unique coding must contain arbitrarily long length of consecutive zeros.
This implies that
$
\liminf_{n\to\f}\ga_n(x)=0
$
for any $x\in E\setminus\bigcup_{\ell=1}^\f A_\ell$.

Therefore, to prove (\ref{eq:mar-5-1}) it suffices to prove that $\mu(A_\ell)=0$ for all $\ell\ge 1$. Take $\ell\ge 1$. Observe that $A_\ell$ is the self-similar set generated by the IFS
\[
\set{f_{i_1\ldots i_\ell}:~ i_1\ldots i_\ell\in\set{0,1,\ldots, N-1}^\ell\textrm{ but }i_1\ldots i_\ell\ne 0^\ell},
\] which satisfy  the open set condition. So,
\[
\dim_H A_\ell=\frac{\log(N^\ell-1)}{-\ell\log\rho}<\frac{\log N}{-\log \rho}=s=\dim_H E.
\]
This implies that $\mathcal H^s(A_\ell)=0$, and thus $\mu(A_\ell)=\mathcal H^s(E\cap A_\ell)=0$. Hence, we prove (\ref{eq:mar-5-1}).

Next we turn to   the typical value of the upper density. First we assume $N=2m+1$ for some $m\ge 1$. Note by \cite[Theroem 5.1]{Falconer_1990} that $\Theta^{*s}(\mu, x)\le 1$ for all $x\in E$. Then by Theorem \ref{main:densities} (ii) it suffices to prove
\begin{equation}
  \label{eq:apr-25-1}
  \liminf_{n\to\f}\eta_n(x)=\frac{1}{2}\quad\textrm{for }\mu-\textrm{almost every }x\in E.
\end{equation}
For $\ell\ge 1$ let
\[
B_\ell:=\bigcap_{k=0}^\f\set{\pi(d_1d_2\ldots)\in E: d_{k\ell+1}\ldots d_{(k+1)\ell}\ne m^\ell }.
\]
Then for any $x\in E\setminus\bigcup_{\ell\ge 1}B_\ell$ its unique coding must contain arbitrarily long length of consecutive $m$'s. Note that $\eta_n(x)\ge 1/2$ for any $x\in E$. Therefore,
$
\liminf_{n\to\f}\eta_n(x) =1/2
$
for any $x\in E\setminus\bigcup_{\ell\ge 1} B_\ell$. Furthermore, by a similar argument as above one can   verify that $\mu(B_\ell)=0$ for all $\ell\ge 1$. This proves (\ref{eq:apr-25-1}).

Finally, we consider the typical value of the upper density with $N=2m$ for some   $m\ge 1$.
Define
\[
B_\ell':=\bigcap_{k=0}^\f\set{\pi(d_1d_2\ldots)\in E: d_{k\ell+1}\ldots d_{(k+1)\ell}=m0^{\ell-1}}
\]
for any $\ell\ge 2$. So, for any $x\in E\setminus\bigcup_{\ell=2}^\f B_\ell'$ its unique coding contains the block $m 0^\ell$ with $\ell$ arbitrarily large. Note that $\eta_n(x)\ge mR=NR/2$ for any $x\in E$.  This implies
$
\liminf_{n\to\f}\eta_n(x)=NR/2
$
for any $x\in E\setminus\bigcup_{\ell=2}^\f B_\ell'$.
  Hence, by Lemma \ref{lem:upper-bound-density} and Theorem \ref{main:densities} (ii) it follows that
\[
\Theta^{*s}(\mu, x)=\frac{1}{(NR)^s}\quad\forall x\in E\setminus\bigcup_{\ell=2}^\f B_\ell'.
\]
 By the same argument as above we can show that $\mu(B_\ell')=0$ for any $\ell\ge 2$. This completes the proof.
\end{proof}
As a corollary of Theorem \ref{main:densities} (iii) we have the precise packing measure of $E$ ( see \cite[Theorem 1.2]{Feng-Hua-Wen-2000} for a proof).
\begin{corollary}\label{cor:packing-measure}
Let $N\ge 2$ and $0<\rho\le 1/N^2$. Then the $s$-dimensional  packing measure of $E$ is given by
\[
\mathcal P^s(E)=\left(\frac{2R}{\rho}\right)^s.
\]
\end{corollary}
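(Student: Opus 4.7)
The plan is to deduce $\mathcal{P}^s(E) = (2R/\rho)^s$ by combining the pointwise bounds on the lower density from Theorem~\ref{main:densities} with the classical density theorem of Taylor--Tricot for packing measures, which states: for any finite Radon measure $\mu$ on $\mathbb{R}$ and any Borel set $A$, (a) if $\Theta_*^s(\mu,x)\ge c$ for all $x\in A$ then $\mathcal{P}^s(A)\le \mu(A)/c$; (b) if $\Theta_*^s(\mu,x)\le c$ for all $x\in A$ then $\mathcal{P}^s(A)\ge \mu(A)/c$. Since $\mu$ is the self-similar probability measure (equivalently $\mu(E)=\mathcal{H}^s(E)=1$, where the upper bound comes from Zhou's inequality cited in Lemma~\ref{lem:density-2} applied to $U=[0,1]$ and the lower bound follows from the mass distribution principle using the Bernoulli measure $\mu$ itself), it suffices to show $\mathcal{P}^s(E)=(2R/\rho)^s$.

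For the upper bound, I would invoke (\ref{eq:bounds}), which gives $\Theta_*^s(\mu,x)\ge (2R/\rho)^{-s}$ for \emph{every} $x\in E$. Taylor--Tricot (a) applies directly to yield $\mathcal{P}^s(E)\le \mu(E)\cdot(2R/\rho)^{s} = (2R/\rho)^s$.

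For the matching lower bound, I would use Theorem~\ref{main:densities}(iii), which says $\Theta_*^s(\mu,x) = (2R/\rho)^{-s}$ for $\mu$-almost every $x\in E$. Since Taylor--Tricot (b) requires a uniform upper bound on the lower density, I would sidestep this by introducing, for each $\varepsilon>0$, the Borel set
\[
E_\varepsilon := \set{x\in E : \Theta_*^s(\mu,x) \le (2R/\rho)^{-s} + \varepsilon}.
\]
By Theorem~\ref{main:densities}(iii), $\mu(E_\varepsilon)=\mu(E)=1$. Applying Taylor--Tricot (b) to $E_\varepsilon$ with $c=(2R/\rho)^{-s}+\varepsilon$ gives
\[
\mathcal{P}^s(E) \ge \mathcal{P}^s(E_\varepsilon) \ge \frac{\mu(E_\varepsilon)}{(2R/\rho)^{-s}+\varepsilon} = \frac{1}{(2R/\rho)^{-s}+\varepsilon}.
\]
Letting $\varepsilon\to 0^+$ produces $\mathcal{P}^s(E)\ge (2R/\rho)^s$, which combined with the upper bound closes the argument.

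The main obstacle, such as it is, is the mismatch between the ``for all $x$'' hypothesis required by Taylor--Tricot (b) and the ``$\mu$-a.e.'' conclusion in Theorem~\ref{main:densities}(iii); the device of passing through the level sets $E_\varepsilon$ with $\varepsilon\to 0$ handles this cleanly because $E_\varepsilon$ is Borel (the density $\Theta_*^s(\mu,\cdot)$ is Borel measurable) and carries full $\mu$-mass, so monotonicity of $\mathcal{P}^s$ transfers the bound back to $E$. Beyond that, the proof is essentially immediate once Theorem~\ref{main:densities} is in hand, which is why this result is naturally framed as a corollary.
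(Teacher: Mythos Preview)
Your proposal is correct and is precisely the standard density-theorem argument: the paper does not give an in-text proof but simply refers to \cite[Theorem 1.2]{Feng-Hua-Wen-2000}, whose proof proceeds exactly as you describe (Taylor--Tricot bounds applied with the pointwise lower-density information from Theorem~\ref{main:densities}). The level-set device with $E_\varepsilon$ to pass from ``$\mu$-a.e.'' to a uniform bound is the right way to handle the lower bound, and your justification that $\mu(E)=1$ is consistent with the paper's standing assertion that the self-similar probability measure coincides with $\mathcal H^s|_E$.
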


\section{Critical values for the densities}\label{sec:critical-values}
In this section we will determine the critical values   of  the sets
\[
E_*(a)=\set{x\in E: \Theta_*^s(\mu, x)\ge a}\quad\textrm{and}\quad E^*(b)=\set{x\in E: \Theta^{*s}(\mu, x)\le b}
\]
respectively, and prove Theorem \ref{main:critical-values}. Recall that the critical values  for $E_*(a)$ and $E^*(b)$ are defined by
\[
a_c:=\sup\set{a: \dim_H E_*(a)>0}\quad\textrm{and}\quad b_c:=\inf\set{b: \dim_H E^*(b)>0}.
\]
Then by (\ref{eq:bounds}) it follows  that $a_c<b_c$.
Motivated  by some recent works on critical values of unique beta expansions (cf.~\cite{Glendinning_Sidorov_2001, Kong_Li_Dekking_2010}) and open dynamical systems \cite{Kalle-Kong-Langeveld-Li-18}, we show that the critical values  $a_c$ and $b_c$ are related to the Thue-Morse type sequences $(\la_i)$ and $(\theta_i)$ defined in Definition \ref{def:sequences}.

\subsection{Critical value of $E_*(a)$}
For $t\in[0, 1]$ let
\[\al(t)=\al_1(t)\al_2(t)\ldots:=\pi^{-1}(t'),\] where $t'$ is the smallest element of $E$ no less than $t$. So, if $t\in E$, then $\al(t)$ is indeed the unique coding of $t$. Since $E$ is a Cantor set, its complement $[0, 1]\setminus E$ is a countable union of open intervals. The definition of $\al(t)$ implies that in each connected component of $[0, 1]\setminus E$ the map $t\mapsto \al(t)$ is constant. Observe by Theorem \ref{main:densities} (i) that $\Theta_*^s(\mu, x)$ is uniquely determined by $\ga(x):=\liminf_{n\to\f}\ga_n(x)$. So it suffices to consider the critical value  of the set
\[
E_\ga(t):=\set{x\in E: \ga(x)\ge t}.
\]
To describe the set $E_\ga(t)$ it is convenient to study the corresponding  set in the coding (sequence) space.  For this reason we first recall some terminology from the symbolic dynamics (cf.~\cite{Lind_Marcus_1995}).

For a sequence $(c_i)=c_1c_2\ldots\in\set{0,1,\ldots, N-1}^\N$ we mean an infinite string of digits. Similarly,  for a word $\mathbf c=c_1\ldots c_n$ with $n\in\N$ we mean  a finite string of digits with each digit $c_i$ from $\set{0,1,\ldots, N-1}$. For two words $\mathbf c$ and $\mathbf d$ we denote by $\mathbf c\mathbf d$ the new word which is the concatenation of them. Also, for $n\in\N$ we write for $\mathbf c^n=\mathbf c\cdots\mathbf c$ the $n$ times concatenation of $\mathbf c$, and   write for $\mathbf c^\f$ the periodic sequence with period block $\mathbf c$. In this paper we use the lexicographical ordering `$\prec, \lle, \succ$' and '$\lge$' between sequences and words in the usual way. For example, for two sequences $(c_i), (d_i)\in\set{0,1,\ldots, N-1}^\N$ we say $(c_i)\prec (d_i)$ if $c_1<d_1$, or there exists $n\in\N$ such that $c_1\ldots c_n=d_1\ldots d_n$ and $c_{n+1}<d_{n+1}$. Also, we write $(c_i)\lle (d_i)$ if $(c_i)=(d_i)$ or $(c_i)\prec (d_i)$. For two words $\mathbf c$ and $\mathbf d$ not necessarily of the same length, we say $\mathbf c\prec \mathbf d$ if $\mathbf c 0^\f\prec \mathbf d 0^\f$. Recall that  for a word $\mathbf c=c_1\ldots c_n$ its reflection is defined by $\overline{\mathbf c}=(N-1-c_1)\ldots (N-1-c_n)$. If $c_n<N-1$, then we write $\mathbf c^+=c_1\ldots c_{n-1}(c_n+1)$; and if $c_n>0$ then we write $\mathbf c^-=c_1\ldots c_{n-1}(c_n-1)$. So, $\overline{\mathbf c}, \mathbf c^+$ and $\mathbf c^-$ are all words with each digit from $\set{0,1,\ldots, N-1}$. Analogously, for a sequence $(c_i)\in\set{0,1,\ldots, N-1}^\N$ we denote its \emph{reflection} by $\overline{(c_i)}:=(N-1-c_1)(N-1-c_2)\ldots$.

 Now we define the   symbolic analogue of $E_\ga(t)$.
For $t\in[0,1]$ let  $E_\ga'(t)$ be the set   of   sequences $(d_i)\in\set{0,1,\ldots, N-1}^\N$ satisfying
\[\left\{
\begin{array}{lll}
d_{n+1}d_{n+2}\ldots\lge \al(t)&\textrm{if}& d_n=0,\\
d_{n+1}d_{n+2}\ldots \lge \al(t)\textrm{ or }d_{n+1}d_{n+2}\ldots \lle \overline{\al(t)}&\textrm{if}& 1\le d_n\le N-2,\\
d_{n+1}d_{n+2}\ldots \lle \overline{\al(t)}&\textrm{if}& d_n=N-1.
\end{array}\right.\]

In the following proposition we show that $E_\ga(t)$ and $\pi(E_\ga'(t))$ have the same Hausdorff dimension for each $t\notin E$.
\begin{proposition}\label{prop:dimension}
Let $N\ge 2$ and $0<\rho\le 1/N^2$.
If $t\in[0, 1]\setminus E$, then
\[
\dim_H E_\ga(t)=\dim_H\pi(E_\ga'(t)).
\]
\end{proposition}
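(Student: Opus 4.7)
The plan is to sandwich $E_\ga(t)$ between $\pi(E_\ga'(t))$ and a countable union of similarity images of $\pi(E_\ga'(t))$, then appeal to countable stability of Hausdorff dimension.

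First I would record the routine inclusion $\pi(E_\ga'(t))\subseteq E_\ga(t)$. For $(d_i)\in E_\ga'(t)$ and $x=\pi((d_i))$, the identity $T^n x=\pi(d_{n+1}d_{n+2}\ldots)$ together with the observation (valid because $t\notin E$) that $d_{n+1}d_{n+2}\ldots\lge\al(t)$ and $d_{n+1}d_{n+2}\ldots\lle\overline{\al(t)}$ correspond respectively to $T^n x\ge t$ and $T^n x\le 1-t$, translate the three clauses defining $E_\ga'(t)$ clause-by-clause into $\ga_n(x)\ge t$ for every $n\ge 1$. In particular $\liminf_n \ga_n(x)\ge t$, giving $\dim_H \pi(E_\ga'(t))\le \dim_H E_\ga(t)$.

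The substantive direction hinges on the fact that $\ga_n(x)\in E$ for every $x\in E$ and every $n\ge 1$: indeed $\ga_n(x)$ is either $T^n x$ or $1-T^n x$, and both lie in $E$ because the reflection $\pi((d_i))\mapsto\pi(\overline{(d_i)})$ is an involution on $E$, so $1-E=E$. Now let $(a,b)$ be the connected component of $[0,1]\setminus E$ containing $t$, so that $a,b\in E$ with $a<t<b$. For any $x\in E_\ga(t)$, choose $\ep=t-a>0$; the liminf hypothesis gives some $N$ with $\ga_n(x)>a$ for all $n\ge N$, and combined with $\ga_n(x)\in E$ and $E\cap(a,b)=\emptyset$ this forces $\ga_n(x)\ge b\ge t$ for all $n\ge N$.

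Next I would apply the shift identity $\ga_{N+n}(x)=\ga_n(T^N x)$ (immediate from $T^n(T^N x)=T^{N+n}x$ and the fact that $\ga_n$ is determined by the $n$-th digit and the $n$-th iterate), which rewrites the above as $\ga_n(T^N x)\ge t$ for every $n\ge 1$. Running in reverse the symbolic translation from the first paragraph (again using $t\notin E$ to equate $\ga_n\ge t$ in $E$ with the appropriate inequality against $\al(t)$) shows that the coding of $T^N x$ belongs to $E_\ga'(t)$, i.e.\ $T^N x\in \pi(E_\ga'(t))$. Writing $d_1\ldots d_N$ for the first $N$ digits of $x$, one obtains $x\in f_{d_1\ldots d_N}(\pi(E_\ga'(t)))$ and therefore
\[
E_\ga(t)\;\subseteq\;\bigcup_{N\ge 0}\;\bigcup_{d_1\ldots d_N\in\set{0,1,\ldots,N-1}^N} f_{d_1\ldots d_N}\bigl(\pi(E_\ga'(t))\bigr).
\]
Each $f_{d_1\ldots d_N}$ is a similarity, so the right-hand side is a countable union of scaled copies of $\pi(E_\ga'(t))$; countable stability of Hausdorff dimension now yields $\dim_H E_\ga(t)\le \dim_H \pi(E_\ga'(t))$, and combined with the first paragraph this proves equality.

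The main obstacle, and the place where the hypothesis $t\notin E$ is genuinely needed, is the upgrade from $\liminf_n \ga_n(x)\ge t$ to the eventual inequality $\ga_n(x)\ge t$ for $n\ge N$. Both the slack $\ep=t-a>0$ and the fact that $\ga_n(x)$ lies in the closed set $E$ which avoids $(a,b)$ are essential; if $t\in E$ the liminf could be attained as a limit from above without ever crossing into the regime defined by $\al(t)$, so one would have to pass through auxiliary parameters $s<t$ with $s\notin E$ and add a separate continuity-type argument for $\dim_H \pi(E_\ga'(s))$.
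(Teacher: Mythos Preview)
Your argument is correct and follows essentially the same route as the paper's proof: both establish $\pi(E_\ga'(t))\subseteq E_\ga(t)$ directly, then use the gap in $E$ around $t$ to upgrade $\liminf_n\ga_n(x)\ge t$ to an eventual inequality $\ga_n(x)\ge t$ (the paper phrases this via a symmetric $\ep$-neighbourhood disjoint from $E$, you via the left endpoint of the gap---same content), and finally conclude that the tail coding lies in $E_\ga'(t)$, yielding the countable-union covering and equality of dimensions. Your explicit isolation of the fact $\ga_n(x)\in E$ and of the shift identity $\ga_{N+n}(x)=\ga_n(T^N x)$ makes the logic a little cleaner than the paper's case-by-case verification, but the strategy is identical; the only cosmetic issue is that you reuse the symbol $N$ for the threshold index while it already denotes the alphabet size.
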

\begin{proof}
Let $t\in[0, 1]\setminus E$. Since $E$ is compact and $\set{0, 1}\subset E$, there exists $\ep>0$ such that $[t-\ep, t+\ep]\cap E=\emptyset$.  Take $x\in E_\ga(t)$. Then  $\ga(x)=\liminf_{n\to\f}\ga_n(x)\ge t$. So there exists a large integer $M$ such that
\begin{equation}\label{eq:apr-8-1}
\ga_n(x)\ge t-\ep\quad\forall ~n\ge M.
\end{equation}
Write $x=\pi(d_1d_2\ldots)$. We claim that
$
d_{M+1}d_{M+2}\ldots \in E_\ga'(t).
$
Take $n\ge M+1$. We will prove the claim by considering the following three cases.

Case I. If $d_n=0$, then by (\ref{eq:apr-8-1}) it follows that
\begin{equation}\label{eq:apr-8-2}
 \pi(d_{n+1}d_{n+2}\ldots )=T^n x=\ga_n(x)\ge t-\ep.
\end{equation}
Since $[t-\ep, t+\ep]\cap E=\emptyset$, by (\ref{eq:apr-8-2}) and the definition of $\al(t)$ it follows that
\begin{equation}\label{eq:apr-28-1}
\pi(d_{n+1}d_{n+2}\ldots )\ge \pi(\al(t-\ep))= \pi(\al(t)).
\end{equation}
Observe that the projection map $\pi: \set{0,1\ldots, N-1}^\N\to E$ is bijective and strictly increasing with respect to the lexicographical ordering in $\set{0,1,\ldots, N-1}^\N$. We then conclude from (\ref{eq:apr-28-1}) that
\[
d_{n+1}d_{n+2}\ldots \lge \al(t)
\]
as desired.

Case II. If $d_n\in\set{1,\ldots, N-2}$, then again by (\ref{eq:apr-8-1}) we obtain that
\[
 \max\set{\pi(d_{n+1}d_{n+2}\ldots ), \pi(\overline{d_{n+1}d_{n+2}\ldots})}=\max\set{T^n x, 1-T^n x}=\ga_n(x)\ge t-\ep.
\]
By the same argument as in Case I we   deduce that
\[
d_{n+1}d_{n+2}\ldots \lge \al(t)\quad\textrm{or}\quad \overline{d_{n+1}d_{n+2}\ldots }\lge \al(t).
\]

Case III. If $d_n=N-1$, then by (\ref{eq:apr-8-1}) we also have
\[
\pi(\overline{d_{n+1}d_{n+2}\ldots})=1-T^n x=\ga_n(x)\ge t-\ep.
\]
Using the same argument as in Case I we obtain that $\overline{d_{n+1}d_{n+2}\ldots}\lge \al(t)$. This establishes the claim.

Therefore, by the claim it follows that
\[
E_\ga(t)\subset\bigcup_{k=0}^\f\bigcup_{\mathbf i\in\set{0,1,\ldots, N-1}^k}f_{\mathbf i}(\pi(E_\ga'(t))),
\]
which gives $\dim_H E_\ga(t)\le \dim_H\pi(E_\ga'(t))$.

On the other hand,   take $x=\pi(d_1d_2\ldots )\in\pi(E_\ga'(t))$. Then by the same argument as above we can deduce that
\[
\ga _n(x)\ge \pi(\al(t))\ge t\quad\forall n\ge 0.
\]
This implies that $\ga(x)=\liminf_{n\to\f}\ga_n(x)\ge t$. So, $\pi(E_\ga'(t))\subset E_\ga(t)$, and thus $\dim_H E_\ga(t)=\dim_H\pi(E_\ga'(t))$.
\end{proof}

Recall from Definition \ref{def:sequences} that  $(\la_i)_{i=1}^\f$ is the Thue-Morse type sequence  satisfying
\[
\la_1=N-1,\quad\textrm{and}\quad \la_{2^n+1}\ldots \la_{2^{n+1}}=\overline{\la_1\ldots \la_{2^n}}^+\quad\forall n\ge 0.
\]
Then $(\la_i)$ begins with
$
 (N-1)10(N-1)0(N-2)(N-1)10\ldots.
 $
\begin{lemma}\label{lem:prop-lambda}\mbox{}

\begin{enumerate}[{\rm(i)}]
\item For any $n\in\N$ we have
\begin{equation}\label{eq:inequality-lambda}
\overline{\la_1\ldots \la_{2^n-i}}\prec \la_{i+1}\ldots \la_{2^n}\lle \la_1\ldots\la_{2^n-i}\quad \forall ~ 0\le i<2^n.
\end{equation}
\item If $\la_i\in\set{1,\ldots, N-2}$ for some $i\ge 1$, then $\la_{i+1}\in\set{0, N-1}$.

\end{enumerate}
\end{lemma}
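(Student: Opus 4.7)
The plan is to prove both statements by induction on $n$, organizing the work around the recursion $\mathbf{w}_{n+1} = \mathbf{w}_n\overline{\mathbf{w}_n}^+$ where $\mathbf{w}_n := \la_1\ldots\la_{2^n}$. I would first record three auxiliary facts, each proved by an easy induction: (a) $\la_{2^n} \in \set{1, N-1}$ for every $n \ge 0$ (since $\la_{2^{n+1}} = N - \la_{2^n}$); (b) $\la_{2^n - 1} \in \set{0, N-1}$ for every $n \ge 2$ (from $\la_{2^n - 1} = \overline{\la_{2^{n-1} - 1}}$); and (c) $\la_{2^n + 1} = \overline{\la_1} = 0$ for $n \ge 1$. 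Fact~(a) is crucial because it guarantees that every $^+$ operation encountered is legitimate (the relevant last digit is always strictly less than $N-1$).

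For part~(ii), I would induct on $n$, establishing the property for all $1 \le i < 2^n$. The range $i < 2^{n-1}$ is immediate from the IH. For $i = 2^{n-1}$: by~(a), $\la_{2^{n-1}} \in \set{1, N-1}$, and if this value equals~$1$ (the only element of $\set{1, \ldots, N-2}$ available, assuming $N \ge 3$), then by~(c) we have $\la_{2^{n-1}+1}=0$. For $2^{n-1} < i < 2^n$, the reflection rule $\la_i = \overline{\la_{i - 2^{n-1}}}$ shows $\la_i \in \set{1,\ldots,N-2}$ iff $\la_{i-2^{n-1}} \in \set{1,\ldots,N-2}$, and the IH applied to the earlier block yields $\la_{i-2^{n-1}+1}\in\set{0,N-1}$, hence $\la_{i+1} = \overline{\la_{i-2^{n-1}+1}} \in \set{0, N-1}$; the boundary case $i = 2^n - 1$ is vacuous by~(b).

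For part~(i), I would again induct on $n$, with the base $n = 0$ reducing to the trivial $0 \prec N-1 \lle N-1$. Split the inductive step into two cases. In Case~A ($0 \le i < 2^n$), write
\[
\la_{i+1}\ldots\la_{2^{n+1}} = (\la_{i+1}\ldots\la_{2^n})\,\overline{\mathbf{w}_n}^+, \qquad \la_1\ldots\la_{2^{n+1}-i} = \mathbf{w}_n\,\overline{\la_1\ldots\la_{2^n-i}}.
\]
The lower bound $\overline{\la_1\ldots\la_{2^{n+1}-i}} \prec \la_{i+1}\ldots\la_{2^{n+1}}$ is handed to us by the IH applied to the first $2^n - i$ letters. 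For the upper bound, the IH gives $\la_{i+1}\ldots\la_{2^n} \lle \la_1\ldots\la_{2^n-i}$: if strict we are done, and in the equality case a second application of the IH at the complementary shift $2^n - i$ produces $\overline{\la_1\ldots\la_i} \prec \la_{2^n-i+1}\ldots\la_{2^n}$, which compares the next $i$ letters in our favor. In Case~B ($2^n \le i < 2^{n+1}$), set $j = i - 2^n$ so that $\la_{i+1}\ldots\la_{2^{n+1}} = \overline{\la_{j+1}\ldots\la_{2^n}}^+$ (with a short direct check when $j = 2^n - 1$, using (a)). Reflecting the IH inequalities yields both $\overline{\la_{j+1}\ldots\la_{2^n}} \prec \la_1\ldots\la_{2^n-j}$ and $\overline{\la_1\ldots\la_{2^n - j}} \lle \overline{\la_{j+1}\ldots\la_{2^n}}$; the upper bound then follows from the elementary fact that $\mathbf{a} \prec \mathbf{b}$ implies $\mathbf{a}^+ \lle \mathbf{b}$, and the lower bound from $\overline{\la_{j+1}\ldots\la_{2^n}} \prec \overline{\la_{j+1}\ldots\la_{2^n}}^+$.

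The main obstacle is the equality subcase of the upper bound in Case~A, which forces the IH to be invoked twice at complementary shifts and requires care in noting that the $^+$ on $\overline{\mathbf{w}_n}$ does not affect the first $i < 2^n$ letters, so that the second IH application delivers the needed strictness. Everything else is bookkeeping, though fact~(a) must be cited repeatedly to ensure every $^+$ operation is well defined.
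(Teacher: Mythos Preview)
Your proposal is correct and follows essentially the same inductive scheme as the paper's proof: the same split into the ranges $0\le i<2^n$ and $2^n\le i<2^{n+1}$ for part~(i), with the equality subcase in the former handled by a second application of the hypothesis at the complementary shift, and the latter handled by reflecting the hypothesis; and the same case-by-case induction for part~(ii). The only organizational difference is that you isolate the facts (a)--(c) about $\la_{2^n}$, $\la_{2^n-1}$, and $\la_{2^n+1}$ in advance, whereas the paper derives the analogues of (b) and (c) inline (its Case~4 in~(ii) is exactly the unwinding $\la_{2^{n+1}-1}=\overline{\la_{2^n-1}}=\cdots$ that proves your fact~(b)), and uses~(a) implicitly when rewriting $\overline{\la_{i'+1}\ldots\la_{2^n}}^+$ as $\overline{\la_{i'+1}\ldots\la_{2^n}^-}$.
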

\begin{proof}
First we prove (i). We will  prove (\ref{eq:inequality-lambda}) by induction on $n$. Note that  $\la_1\la_2=(N-1)1$. Since $N\ge 2$, it is clear that (\ref{eq:inequality-lambda}) holds for $n=1$. Now suppose (\ref{eq:inequality-lambda}) holds for some $n\ge 1$, and we    will prove (\ref{eq:inequality-lambda}) with $n$ replaced by $n+1$, i.e.,
\begin{equation}
  \label{eq:apr-28-2}
  \overline{\la_1\ldots\la_{2^{n+1}-i}}\prec \la_{i+1}\ldots \la_{2^{n+1}}\lle \la_1\ldots \la_{2^{n+1}-i}\quad\forall ~0\le i<2^{n+1}.
\end{equation}
Clearly, (\ref{eq:apr-28-2}) holds for $i=0$ since $\la_1>\overline{\la_1}$. So it suffices to prove (\ref{eq:apr-28-2}) for $0<i<2^{n+1}$. We consider the following two cases.

Case I. $0< i<2^n$. Then by the induction hypothesis it follows that
\begin{equation}\label{eq:apr-7-11}
\overline{\la_1\ldots \la_{2^n-i}}\prec \la_{i+1}\ldots \la_{2^n}\lle \la_1\ldots\la_{2^n-i}
\quad\textrm{and}\quad
\overline{\la_1\ldots \la_i}\prec \la_{2^n-i+1}\ldots \la_{2^n}.
\end{equation}
Since $\la_{2^n+1}\ldots \la_{2^n+i}=\overline{\la_1\ldots \la_i}$, we then obtain by (\ref{eq:apr-7-11})    that
\begin{equation*}
\overline{\la_1\ldots\la_{2^n}}\prec \la_{i+1}\ldots \la_{2^n+i}\prec \la_1\ldots \la_{2^n}\quad \textrm{for } 1\le i< 2^n.
\end{equation*}
This proves (\ref{eq:apr-28-2}) for $0< i<2^n$.

Case II. $2^n\le i<2^{n+1}$. Write $i':=i-2^n$. Then by Definition \ref{def:sequences} it follows that
\begin{equation}\label{eq:apr-7-13}
\la_{i+1}\ldots \la_{2^{n+1}}=\overline{\la_{i'+1}\ldots \la_{2^n}}^+=\overline{\la_{i'+1}\ldots \la_{2^n}^-}.
\end{equation}
Note that $0\le i'<2^n$. By the induction hypothesis it follows that
\[
\overline{\la_1\ldots \la_{2^n-i'}}\lle \la_{i'+1}\ldots \la_{2^n}^-\prec \la_1\ldots \la_{2^n-i'}.
\]
Taking the reflection on both sides, and then by (\ref{eq:apr-7-13}) it follows that
\[
\overline{\la_1\ldots \la_{2^{n+1}-i}}\prec \la_{i+1}\ldots \la_{2^{n+1}}\lle \la_1\ldots \la_{2^{n+1}-i}.
\]
This proves (\ref{eq:apr-28-2}) for $2^n\le i<2^{n+1}$.

Therefore, by Cases I and II we establish  (\ref{eq:apr-28-2}). This completes the proof of (i) by induction.

 Now we turn to prove (ii). We will   prove by induction on $n$ that
 \begin{equation}\label{eq:apr-7-21}
 \la_i\in\set{1,\ldots, N-2}\quad\textrm{for some }1\le i<2^n\quad\Longrightarrow\quad \la_{i+1}\in\set{0, N-1}.
 \end{equation}
Note that  (\ref{eq:apr-7-21}) is trivial for $n=1$, since $\la_1\la_2=(N-1)1$. Now suppose (\ref{eq:apr-7-21}) holds for some $n\ge 1$, and we consider it for $n+1$. Suppose $\la_{i}\in\set{1,\ldots, N-2}$ for some $1\le i<2^{n+1}$.  We consider the following four cases.

 Case 1. If $1\le i<2^n$, then by the  induction hypothesis it follows that $\la_{i+1}\in\set{0, N-1}$.

 Case 2. If $i=2^n$, then by Definition \ref{def:sequences} we have $\la_{i+1}=\overline{\la_1}=0$.

 Case 3. If $2^n< i<2^{n+1}-1$, then write $i':=i-2^n$, and by the definition of $(\la_i)$ it follows that $\la_{i'}=\overline{\la_i}\in\set{1,\ldots, N-2}$. So by the induction hypothesis we have $\la_{i'+1}\in\set{0, N-1}$. This implies $\la_{i+1}=\overline{\la_{i'+1}}\in\set{0, N-1}$.

 Case 4. If $i=2^{n+1}-1$, then $\la_{2^{n+1}-1}=\overline{\la_{2^n-1}}\in\set{1,\ldots, N-2}$. This gives $\la_{2^n-1}\in\set{1,\ldots, N-2}$. Note that $\la_{2^n-1}=\overline{\la_{2^{n-1}-1}}$. So we also have $\la_{2^{n-1}-1}\in\set{1,\ldots, N-2}$. Proceeding this argument we can deduce that $\la_1\in\set{1,\ldots, N-2}$, leading to a contradiction with $\la_1=N-1$. So, $\la_{2^{n+1}-1}\notin\set{1,\ldots, N-2}$.

 By Cases 1--4 we prove (\ref{eq:apr-7-21}) with $n$ replaced by $n+1$. By induction this proves (ii).
 \end{proof}

Now we are ready to determine the critical values of $E_\ga(t)$, and show that it is equal to
\[t_\ga:=\pi(\overline{\la_2\la_3\ldots})=1-R\sum_{i=1}^\f\la_{i+1}\rho^{i-1}.\]
\begin{lemma}\label{lem:lower-bound-gamma-2}
If $t<t_\ga$, then $\dim_H E_\ga(t)>0$.
\end{lemma}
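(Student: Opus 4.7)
The plan is to produce a positive-dimensional self-similar Cantor subset of $E_\gamma(t)$ from two Thue-Morse-type building blocks. By the monotonicity of $t\mapsto E_\gamma(t)$ and Proposition~\ref{prop:dimension}, after replacing $t$ by a slightly larger value $t'\in(t,t_\gamma)\setminus E$, it suffices to show $\dim_H \pi(E_\gamma'(t'))>0$. A useful preliminary is that $(\lambda_i)$ is not eventually constant (clear from Definition~\ref{def:sequences} and Lemma~\ref{lem:prop-lambda}), so the coding $\overline{\lambda_2\lambda_3\ldots}$ of $t_\gamma$ is not eventually $0^\infty$ or $(N-1)^\infty$, i.e.~$t_\gamma$ is not an endpoint of $E$. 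Hence $t'<t_\gamma$ lifts to the strict lex inequality $\alpha(t')\prec\overline{\lambda_2\lambda_3\ldots}$, yielding a definite lex-gap at some finite depth $L$; moreover, since $t_\gamma\in((N-2)R,(N-2)R+\rho)$ strictly, we also get $\alpha_1(t')\le N-2$.

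For an integer $n$ sufficiently large and of appropriate parity, I would introduce the two length-$2^n$ blocks
\[
\mathbf{A}:=\lambda_1\lambda_2\cdots\lambda_{2^n},\qquad \mathbf{B}:=\overline{\lambda_1\lambda_2\cdots\lambda_{2^n}},
\]
and form the sub-IFS $\{f_{\mathbf{A}},f_{\mathbf{B}}\}$. Since $\rho\le 1/N^2$ forces strong separation for $\{f_i\}_{i=0}^{N-1}$, the sub-IFS also satisfies the strong separation condition, and its attractor $K_n$ is a self-similar Cantor set with
\[
\dim_H K_n=\frac{\log 2}{2^n|\log\rho|}>0.
\]
The central claim is that $K_n\subseteq \pi(E_\gamma'(t'))$, which would immediately yield $\dim_H E_\gamma(t)\ge \dim_H K_n>0$ and complete the proof.

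To verify the inclusion, take a coding $\mathbf{U}_1\mathbf{U}_2\cdots$ with each $\mathbf{U}_i\in\{\mathbf{A},\mathbf{B}\}$, and at every shift $m=k\cdot 2^n+j$ ($0\le j<2^n$) check the admissibility condition of $E_\gamma'(t')$ according to $d_m$. For interior positions with $2^n-j\ge L$, Lemma~\ref{lem:prop-lambda}(i) traps $\lambda_{j+1}\cdots\lambda_{2^n}$ lex-between $\overline{\lambda_1\cdots\lambda_{2^n-j}}$ and $\lambda_1\cdots\lambda_{2^n-j}$; combined with the lex-gap from the first paragraph, this already forces the desired inequality against $\alpha(t')$ (or $\overline{\alpha(t')}$) regardless of $\mathbf{U}_{k+2}$. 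At block boundaries ($j=0$) and in the short-tail regime ($2^n-j<L$), one uses that $\lambda_{2^n}\in\{1,N-1\}$ depending on the parity of $n$, and that $\mathbf{A},\mathbf{B}$ begin with $N-1$ and $0$ respectively; combined with the slack $\alpha_1(t')\le N-2$, most transitions resolve directly, with the middle admissibility case (for $N\ge 3$) supplying the extra flexibility that simultaneously validates the $\mathbf{A}\mathbf{A}$ and $\mathbf{B}\mathbf{B}$ transitions. The main obstacle is precisely this combinatorial case analysis at short tails, where the comparison between the Thue-Morse prefix concatenated with the first digits of the next block and the initial segment of $\alpha(t')$ must be reconciled using the slack; the $N=2$ case is the most delicate, since the middle admissibility case is absent, and a modification of the construction (e.g.~replacing $\mathbf{B}$ by a variant such as $\mathbf{A}\mathbf{B}$ of length $2^{n+1}$, or restricting the allowed transitions via a Markov-type subshift) is likely needed to preserve strict lex comparison at every boundary while still yielding positive topological entropy.
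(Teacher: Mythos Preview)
Your overall strategy---reduce via Proposition~\ref{prop:dimension} to the symbolic set $E_\gamma'(t')$ for some $t'\in(t,t_\gamma)$, then exhibit a positive-entropy subshift inside it built from Thue--Morse prefixes---is exactly the paper's, and your use of Lemma~\ref{lem:prop-lambda} for the interior comparisons is also the right mechanism. However, there is a genuine gap that you yourself flag but do not close: the full shift on $\{\mathbf A,\mathbf B\}=\{\lambda_1\cdots\lambda_{2^n},\,\overline{\lambda_1\cdots\lambda_{2^n}}\}$ is \emph{not} contained in $E_\gamma'(t')$ in general. Concretely, for $N=2$ one has $\lambda_{2^n}=1=N-1$ for every $n$, so after the last digit of $\mathbf A$ the tail must satisfy $\preccurlyeq\overline{\alpha(t')}$; but the tail following $\mathbf A\mathbf A$ begins with $\lambda_1\lambda_2\cdots$, and since $\overline{\alpha(t')}$ only dominates $\lambda_2\lambda_3\cdots$ (not $\lambda_1\lambda_2\cdots$), the comparison fails. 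For instance with $n=2$ and $t'=s_2$, the transition $\mathbf A\mathbf A=1101\,1101$ already violates the condition at position~$4$. Your suggested patches (longer blocks, restricting transitions) are in the right direction, but the combinatorics needed to make them work is precisely the content of the proof, not a detail.

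The paper resolves this with two non-obvious choices. First, it shifts the blocks by one position: instead of $\lambda_1\cdots\lambda_{2^n}$ it uses $\xi:=\overline{\lambda_2\cdots\lambda_{2^n+1}}$ and $\zeta:=\overline{\lambda_{2^n+2}\cdots\lambda_{2^{n+1}+1}}$, so that the blocks are exactly aligned with $\alpha(s_n)=\overline{\lambda_2\cdots\lambda_{2^n+1}}\,0^\infty$, which is what the admissibility inequalities compare against. Second, it does \emph{not} take a full shift: only the transitions $\xi\zeta,\ \xi\overline\xi,\ \zeta\overline\xi$ (and their reflections) are checked to be admissible, giving a four-state subshift of finite type with spectral radius $(1+\sqrt5)/2$ and hence positive dimension. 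This Markov restriction is essential and handles $N=2$ and $N\ge 3$ uniformly; the case analysis you outline would need to be redone with these shifted blocks, after which Lemma~\ref{lem:prop-lambda} does the work cleanly.
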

\begin{proof}
Let $s_n=\pi(\overline{\la_2\ldots \la_{2^n+1}}0^\f)$. Then $s_n\nearrow t_\ga$ as $n\to\f$. Note that the set-valued map $t\mapsto E_\ga(t)$ is non-increasing and $E$ is a Cantor set. So by Proposition \ref{prop:dimension} it suffices to prove $\dim_H \pi(E_\ga'(s_n))>0$ for all $n\ge 1$. Fix $n\ge 1$ and write
\[\xi:=\overline{\la_2\ldots \la_{2^n+1}}\quad \textrm{and}\quad \zeta:=\overline{\la_{2^n+2}\ldots \la_{2^{n+1}+1}}.\]
 We claim that the words
$
\xi\zeta, \xi\overline{\xi}$ and $\zeta\overline{\xi}
$
are all
 admissible in $E_\ga'(s_n)$.

First we show that $\xi\zeta$ is admissible in $E_\ga'(s_n)$. Note that
\begin{equation}\label{eq:apr-9-1}
\xi\zeta=\overline{\la_2\ldots \la_{2^{n+1}+1}}=\overline{\la_2\ldots \la_{2^n}}\la_1\ldots \la_{2^n}^-\la_1=:c_1\ldots c_{2^{n+1}}.
\end{equation}
We will show for all $1\le i\le 2^n$ that
\begin{equation}\label{eq:apr-4-2}
\left\{
\begin{array}{lll}
c_{i+1}\ldots c_{i+2^{n}}\lge\overline{\la_2\ldots \la_{2^{n}+1}}&\textrm{if}& c_i=0,\\
c_{i+1}\ldots c_{i+2^{n}}\lge\overline{\la_2\ldots \la_{2^{n}+1}}\textrm{ or }c_{i+1}\ldots c_{i+2^{n}}\lle \la_2\ldots \la_{2^n+1}&\textrm{if}&1\le c_i\le N-2,\\
c_{i+1}\ldots c_{i+2^{n}}\lle \la_2\ldots \la_{2^n+1}&\textrm{if}& c_i=N-1.
\end{array}\right.
\end{equation}
Observe  that if $i=2^n$, then by (\ref{eq:apr-9-1}) we have $c_{2^n}=\la_1=N-1$, and  thus
\[
c_{i+1}\ldots c_{i+2^n}=\la_2\ldots \la_{2^n}^-\la_1\prec \la_2\ldots\la_{2^n+1}
\]
as desired.
In the following it suffices to prove
 (\ref{eq:apr-4-2}) for $1\le i<2^n$. We consider the following three  cases.

Case I. $c_i=0$ for some $1\le i< 2^n$. Then by (\ref{eq:inequality-lambda}) and (\ref{eq:apr-9-1}) it follows that
\[
c_{i}\ldots c_{i+2^n-1}=\overline{\la_{i+1}\ldots \la_{2^n}}\la_1\ldots \la_i\succ \overline{\la_1\ldots \la_{2^n}},
\]
which together with $\overline{\la_1}=0=c_i$ implies
\[c_{i+1}\ldots c_{i+2^n-1}\succ \overline{\la_2\ldots \la_{2^n}}\]
as required.

Case II.  $c_i=N-1$ for some $1\le i<2^n$. Note by (\ref{eq:inequality-lambda}) and (\ref{eq:apr-9-1}) that $c_{2^{n}-1}=\overline{\la_{2^n}}<N-1$. So  we  have $1\le i<2^n-1$. By (\ref{eq:inequality-lambda}) and (\ref{eq:apr-9-1}) it follows that
\[
c_i\ldots c_{2^n-1}=\overline{\la_{i+1}\ldots \la_{2^n}}\prec \la_1\ldots \la_{2^n-i}.
\]
This, together with  $\la_1=N-1=c_i$,  implies $c_{i+1}\ldots c_{2^n-1}\prec \la_2\ldots \la_{2^n-i}$.

Case III. $1\le c_i\le N-2$ for some $1\le i< 2^n$. Then $N\ge 3$, and by Lemma \ref{lem:prop-lambda} (ii) we have $c_{i+1}\in\set{0, N-1}$. Since $\overline{\la_2}=N-2$, it gives that either $c_{i+1}>\overline{\la_2}$ or $c_{i+1}<\la_2$.

By Cases I--III we establish (\ref{eq:apr-4-2}). Similarly,  note  that
\[
\xi\overline{\xi}=\overline{\la_2\ldots \la_{2^{n}+1}}\la_2\ldots \la_{2^n+1}=\overline{\la_2\ldots \la_{2^n}}\la_1\ldots \la_{2^n+1}=:c_1'\ldots c_{2^{n+1}}'
\] and
\[\zeta\overline{\xi}=\overline{\la_{2^n+2}\ldots \la_{2^{n+1}+1}}\la_2\ldots \la_{2^n+1}=\la_2\ldots \la_{2^n}^-\la_1\ldots \la_{2^n+1}=:c_1''\ldots c_{2^{n+1}}''.
\]
Then by  using Lemma \ref{lem:prop-lambda} and the same argument as above we can prove the analogous inequalities in (\ref{eq:apr-4-2}) with   $c_1\ldots c_{2^{n+1}}$ replaced by $c_1'\ldots c_{2^{n+1}}'$ and $c_1''\ldots c_{2^{n+1}}''$, respectively.

 Therefore, by using $\al(s_n)=\overline{\la_2\ldots \la_{2^n+1}}0^\f$  it follows that the words $\xi\zeta, \xi\overline{\xi}$ and $\zeta\overline{\xi}$ are all admissible in $E'_\ga(s_n)$, proving the claim.  Observe  that the set $E_\ga'(t)$ is symmetric, i.e.,  $(d_i)\in E_\ga'(t)$ if and only if $\overline{(d_i)}\in E_\ga'(t)$. By the claim it follows that the words
 \[
 \xi\zeta,\quad \overline{\xi\zeta},\quad\xi\overline{\xi},\quad\overline{\xi}\xi,\quad \zeta\overline{\xi}\quad\textrm{and}\quad \overline{\zeta}\xi
 \]
 are all admissible in $E_\ga'(s_n)$, and hence $E_\ga'(s_n)$ contains the  subshift of finite type $X_A$ over the states $\set{\xi, \zeta, \overline{\xi}, \overline{\zeta}}$ with the adjacency matrix
\[
A=\left(
\begin{array}{llll}
0&1&1&0\\
0&0&1&0\\
1&0&0&1\\
1&0&0&0
\end{array}\right).
\]
This implies that
\[
\dim_H \pi(E'_\ga(s_n))\ge \dim_H\pi(X_A)=\frac{\log r_A}{-\log \rho^{2^n}}=\frac{\log \frac{1+\sqrt{5}}{2}}{-2^n\log \rho}>0,
\]
where $r_A=(1+\sqrt{5})/2$ is the spectral radius of $A$. This completes
  the proof.
\end{proof}

\begin{lemma}
\label{lem:upper-bound-gamma-2}
If $t>t_\ga$, then  the set  $E_\ga(t)$ is countable.
\end{lemma}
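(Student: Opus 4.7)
The plan is to reduce the claim to a symbolic dynamics statement and then exploit the recursive Thue--Morse structure of $(\la_i)$.

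First, I would pick $t'\in(t_\ga,t)$ with $t'\notin E$; this is possible since $E$ is a Cantor set, so its complement is dense. By monotonicity of the map $a\mapsto E_\ga(a)$ we have $E_\ga(t)\subseteq E_\ga(t')$, so it suffices to prove $E_\ga(t')$ is countable. Then, arguing as in the proof of Proposition \ref{prop:dimension}, if $x=\pi(d_1d_2\ldots)\in E_\ga(t')$ choose $\ep>0$ so small that $t'-\ep>t_\ga$ and $[t'-\ep,t'+\ep]\cap E=\emptyset$; for $M$ large enough the tail $d_{M+1}d_{M+2}\ldots$ lies in $E'_\ga(t'')$ for $t'':=t'-\ep$. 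Therefore
\[
E_\ga(t')\;\subseteq\;\bigcup_{k=0}^\f\bigcup_{\mathbf i\in\set{0,\ldots,N-1}^k}f_{\mathbf i}\bigl(\pi(E'_\ga(t''))\bigr),
\]
and a countable union of countable sets is countable, so the problem reduces to showing that $\pi(E'_\ga(t''))$ is countable whenever $t''>t_\ga$.

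Next, since $t''>t_\ga=\pi(\overline{\la_2\la_3\ldots})$ and the projection $\pi$ is strictly increasing, there exists a smallest $m\in\N$ such that $\al(t'')\succ \overline{\la_2\ldots\la_{2^m+1}}\,0^\f$; in particular $\al(t'')$ agrees with $\overline{\la_2\la_3\ldots}$ on some prefix and then is strictly larger. The idea is then to show, by induction on the length of the word, that for any $(d_i)\in E'_\ga(t'')$ the only admissible finite patterns are the ``extremal'' ones dictated by the Thue--Morse sequence $(\la_i)$. Concretely, the three building blocks $\xi\zeta$, $\xi\overline\xi$, $\zeta\overline\xi$ that gave the golden-mean subshift in the proof of Lemma \ref{lem:lower-bound-gamma-2} involve prefixes which equal $\overline{\la_2\ldots\la_{2^n+1}}$ exactly; once $\al(t'')$ strictly exceeds such a prefix at level $m$, a repeated use of Lemma \ref{lem:prop-lambda}(i) shows that the branch-creating concatenation $\xi\zeta$ (and likewise $\zeta\overline\xi$) becomes forbidden in $E'_\ga(t'')$, leaving only the ``straight'' continuation corresponding to $\xi\overline\xi\overline{\xi}\xi\cdots$, i.e.\ to the orbit of $\overline{\la_2\la_3\ldots}$ (and, by symmetry, of $\la_2\la_3\ldots$) under the shift.

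Finally, I would conclude that any $(d_i)\in E'_\ga(t'')$ must, from some position onward, coincide with a shift of $\overline{(\la_i)}$ or of $(\la_i)$; the number of such sequences is countable, so $\pi(E'_\ga(t''))$ is countable, and therefore $E_\ga(t)$ is at most countable. The main obstacle is the combinatorial step: carefully showing, via Lemma \ref{lem:prop-lambda}, that the strict inequality $\al(t'')\succ\overline{\la_2\la_3\ldots}$ propagates through all admissibility conditions so as to kill every branching option created at level $m$, and thence at every subsequent level $m+1,m+2,\ldots$; this is the place where the self-similar (substitution) structure of $(\la_i)$ really has to be exercised, in contrast to the existence direction in Lemma \ref{lem:lower-bound-gamma-2} where one only had to exhibit admissible blocks.
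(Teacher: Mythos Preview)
Your reduction to the symbolic set $E'_\ga(t'')$ is fine and matches the paper. The genuine gap is in the combinatorial conclusion you are aiming for. You assert that every $(d_i)\in E'_\ga(t'')$ must eventually coincide with a shift of $(\la_i)$ or $\overline{(\la_i)}$; this is false. The sequence $(\la_i)$ is aperiodic, whereas the sequences that actually survive in $E'_\ga(t'')$ are eventually \emph{periodic}: they end with $(\overline{v_n}\,v_n)^\f$ where $v_n=\la_2\ldots\la_{2^n+1}$. Relatedly, your identification ``$\xi\overline\xi\,\overline\xi\xi\cdots=\overline{\la_2\la_3\ldots}$'' is incorrect: with $\xi=\overline{\la_2\ldots\la_{2^n+1}}$ and $\zeta=\overline{\la_{2^n+2}\ldots\la_{2^{n+1}+1}}$ one has $\overline{\la_2\la_3\ldots}=\xi\zeta\cdots$, and $\zeta\ne\overline\xi$ (they differ in the last two symbols since $\la_{2^n+1}\ldots\la_{2^{n+1}}=\overline{\la_1\ldots\la_{2^n}}^+$). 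So your picture of which block gets ``forbidden'' and which survives is inverted, and the target you set yourself cannot be reached.

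The paper's argument is structurally different from what you sketch. Rather than pushing a single $t''$ through, it chooses the special decreasing sequence $t_n=\pi\bigl(\overline{\la_2\ldots\la_{2^n+1}}(N-1)^\f\bigr)\searrow t_\ga$ and proves that each layer $E'_\ga(t_{n+1})\setminus E'_\ga(t_n)$ is countable by showing every sequence there must end with $(\overline{v_n}v_n)^\f$. The point is that for such a $(d_i)$ one has simultaneous two-sided constraints coming from $\al(t_n)$ and $\al(t_{n+1})$, and a step-by-step analysis (first pinning down $d_{k+1}\ldots d_{k+2^n}=\overline{v_n}$, then $d_{k+2^n+1}\ldots d_{k+2^{n+1}}=v_n$, then iterating) forces the periodic tail; the contradictions that exclude the alternative continuations come from the exact $(N-1)^\f$ and $0^\f$ tails of $\al(t_n),\al(t_{n+1})$, not merely from Lemma~\ref{lem:prop-lambda}(i). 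Your proposal does not supply this mechanism, and the ``repeated use of Lemma~\ref{lem:prop-lambda}(i)'' you invoke is not enough to carry it.
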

\begin{proof}
Let $t_n=\pi(\overline{\la_2\ldots \la_{2^n+1}}(N-1)^\f)$. Then $t_n\searrow t_\ga$ as $n\to\f$. Note that the set-valued map $t\mapsto E_\ga(t)$ is non-increasing, and $E_\ga(t)$ is a countable union of scaling copies of $\pi(E_\ga'(t))$. So it suffices to prove $E_\ga'(t_n)$ is countable for all $n\ge 1$. Observe that for $n=1$ we have $\al(t_1)=(N-2)(N-1)^\f$. One can easily verify that any sequence in $E_\ga'(t_1)$ must end with $(0(N-1))^\f$, and thus $E_\ga'(t_1)$ is countable. Since $E_\ga'(t_{n+1})\supseteq E_\ga'(t_n)$ for any $n\ge 1$, in the following we only need  to prove that $E_\ga'(t_{n+1})\setminus E_\ga'(t_n)$ is countable for all $n\ge 1$.

Fix $n\ge 1$. Note that $\al(t_n)$ and $\al(t_{n+1})$ both begin with $\overline{\la_2\la_3}=(N-2)(N-1)$. Then
\begin{equation}\label{eq:apr-28-3}
\begin{array}{lll}
\overline{\al(t_n)}\prec \overline{\al(t_{n+1})}\prec  \al(t_{n+1})\prec \al(t_n)&\quad\textrm{if}& N\ge 3,\\
\al(t_{n+1})\prec \al(t_n)\prec \overline{\al(t_n)}\prec \overline{\al(t_{n+1})}&\quad\textrm{if}& N=2.
\end{array}
\end{equation}
Take $(d_i)\in E_\ga'(t_{n+1})\setminus E_\ga'(t_n)$. Then by (\ref{eq:apr-28-3}) and the definition of $E_\ga'(t)$ there must exist  $k\in\N$ such that
\begin{equation}\label{eq:apr-4-4}
d_k\le N-2\quad\textrm{and}\quad \al(t_{n+1})\lle d_{k+1}d_{k+2}\ldots \prec \al(t_n)
\end{equation}
or
\begin{equation}\label{eq:apr-4-5}
d_k\ge 1\quad\textrm{and}\quad \overline{\al(t_{n})}\prec d_{k+1}d_{k+2}\ldots \lle \overline{\al(t_{n+1})}.
\end{equation}
Write $v_n:=\la_2\ldots \la_{2^n+1}$. We will show in each case that the sequence $(d_i)$ must end with $(v_n\overline{v_n})^\f$.

Suppose (\ref{eq:apr-4-4}) holds.   Then $d_k\le N-2$, and by using $\al(t_n)=\overline{\la_2\ldots \la_{2^n+1}}(N-1)^\f$ it follows that
\begin{equation}\label{eq:apr-4-6}
\begin{split}
&d_{k+1}d_{k+2}\ldots\prec \overline{\la_2\ldots \la_{2^n+1}}(N-1)^\f, \\
&d_{k+1}d_{k+2}\ldots\lge\overline{\la_2\ldots \la_2^{n+1}+1}  (N-1)^\f= \overline{\la_2\ldots \la_{2^n+1}}\la_2\ldots \la_{2^n}^- (N-1)^\f.
\end{split}
\end{equation}
This implies $d_{k+1}\ldots d_{k+2^n}=\overline{\la_2\ldots \la_{2^n+1}}=\overline{v_n}$. In particular, $d_{k+2^n}=\overline{\la_{2^n+1}}=\la_1=N-1$. So, by using $(d_i)\in E_\ga'(t_{n+1})$ it follows that
\begin{equation}\label{eq:apr-4-7}
d_{k+2^n+1}\ldots d_{k+2^{n+1}}\lle \la_2\ldots \la_{2^n+1}.
\end{equation}
This together with (\ref{eq:apr-4-6}) gives $d_{k+2^n+1}\ldots d_{k+2^{n+1}-2}=\la_2\ldots \la_{2^n-1}$. If $d_{k+2^{n+1}-1}=\la_{2^n}^-$, then (\ref{eq:apr-4-6}) implies that $(d_i)$ must end with $(N-1)^\f$, leading to a contradiction with $(d_i)\in E_\ga'(t_{n+1})$. So, $d_{k+2^{n+1}-1}=\la_{2^n}.$ By (\ref{eq:apr-4-7}) and using $\la_{2^n+1}=\overline{\la_1}=0$ it follows that $d_{k+2^n+1}\ldots d_{k+2^{n+1}}=\la_2\ldots \la_{2^n+1}$. Hence,
\begin{equation}\label{eq:apr-4-8}
d_{k+1}\ldots d_{k+2^{n+1}}=\overline{\la_2\ldots \la_{2^n+1}}\la_2\ldots \la_{2^n+1}=\overline{v_n}v_n.
\end{equation}

Now, since $d_{k+2^n}=\overline{\la_{2^{n}+1}}=N-1$ and $(d_i)\in E_\ga'(t_{n+1})$, by (\ref{eq:apr-4-8}) it follows that
\begin{equation}\label{eq:apr-4-9}
d_{k+2^{n+1}+1}\ldots d_{k+2^{n+1}+2^n}\lle \la_{2^n+2}\ldots \la_{2^{n+1}+1}=\overline{\la_2\ldots \la_{2^n}}^+ 0.
\end{equation}
Also, by (\ref{eq:apr-4-8}) that $d_{k+2^{n+1}}=\la_{2^n+1}=0$  it follows that
\begin{equation}\label{eq:apr-4-10}
d_{k+2^{n+1}+1}\ldots d_{k+2^{n+1}+2^n}\lge \overline{\la_2\ldots \la_{2^n+1}}=\overline{\la_1\ldots \la_{2^n}}(N-1).
\end{equation}
By (\ref{eq:apr-4-9}) and (\ref{eq:apr-4-10}) it follows that
\[
\textrm{either }d_{k+2^{n+1}+1}\ldots d_{k+2^{n+1}+2^n}=\overline{\la_2\ldots \la_{2^n+1}}\quad\textrm{or}\quad d_{k+2^{n+1}+1}\ldots d_{k+2^{n+1}+2^n}=\overline{\la_2\ldots \la_{2^n}}^+ 0.
\]
While in the second case we have by (\ref{eq:apr-4-8}) and (\ref{eq:apr-4-9}) that
\[d_{k+2^n}=N-1\quad\textrm{and}\quad d_{k+2^n+1}\ldots d_{k+2^{n+1}+2^n}=\la_2\ldots \la_{2^{n+1}+1}.\] Using $(d_i)\in E_\ga'(t_{n+1})$ this implies $(d_i)$ must end  with $0^\f$, leading to a contradiction. So, we  have
$d_{k+2^{n+1}+1}\ldots d_{k+2^{n+1}+2^n}=\overline{\la_2\ldots \la_{2^n+1}}=\overline{v_n}$.
Proceeding this reasoning we conclude that  $(d_i)$  must  end with $(\overline{v_n}v_n)^\f$.

 Symmetrically,  if (\ref{eq:apr-4-5}) holds, then one can also show that $(d_i)$ ends with $(\overline{v_n}v_n)^\f$. This completes the proof.
\end{proof}
\begin{proof}
[Proof of Theroem \ref{main:critical-values} (A)]
Observe by Theorem \ref{main:densities} that
\[
E_\ga(t)=\set{x\in E: \ga(x)\ge t}=\set{x\in E: \Theta_*^s(\mu, x)\ge \frac{1}{2^s(R/\rho-t)^{s}}}=E_*\left(\frac{1}{2^s(R/\rho-t)^{s}}\right).
\]
So it suffices to prove that $t_\ga$ is the critical values of $E_\ga(t)$.
By  Lemmas \ref{lem:lower-bound-gamma-2} and \ref{lem:upper-bound-gamma-2} we only need to consider $E_\ga(t)$  for $t=t_\ga$. By the proof of Lemma \ref{lem:upper-bound-gamma-2} it follows that  the set $E_\ga'(t_\ga)$ contains   all sequences of the form
\[
 ((N-1)0)^{k_0}(v_1\overline{v_1})^{k_1}\cdots(v_n\overline{v_n})^{k_n}\cdots,\quad   k_n\in\set{0,1,2,\ldots}\cup\set{\f},
\]and their reflections,
where $v_n=\overline{\la_2\ldots \la_{2^n+1}}$. This implies that $\pi(E'_\ga(t_\ga))$ is uncountable. Since $\pi(E'_\ga(t_\ga))\subset E_\ga(t_\ga)$, this proves the uncountability of $E_\ga(t_\ga)$.
\end{proof}

\subsection{Critical value of  $E^*(b)$} Now we turn to describe  the critical value of $E^*(b)$. Let $\eta(x):=\liminf_{n\to\f}\eta_n(x)=\liminf_{n\to\f}\max\set{T^n x, 1-T^n x}$. Define
\[
E_\eta(t):=\set{x\in E: \eta(x)\ge t},
\]
and the corresponding symbolic set
\[
E_\eta'(t):=\set{(d_i): d_{n+1}d_{n+2}\ldots\lge \al(t)\quad\textrm{or}\quad d_{n+1}d_{n+2}\ldots\lle \overline{\al(t)}~\forall n\ge 0}.
\]
We  first determine the critical value of $E_\eta(t)$, and then use it to determine the critical value of $E^*(b)$.
\begin{proposition}\label{prop:dimension-1}
For any $t\in[0, 1]\setminus E$ we have
\[
\dim_H E_\eta(t)=\dim_H\pi(E_\eta'(t)).
\]
\end{proposition}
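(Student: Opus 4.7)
The plan is to adapt the proof of Proposition \ref{prop:dimension} almost verbatim, since $E_\eta(t)$ and $E_\eta'(t)$ are even simpler than $E_\ga(t)$ and $E_\ga'(t)$: the defining condition on $d_{n+1}d_{n+2}\ldots$ no longer depends on the preceding digit $d_n$. The two inclusions yielding the dimension equality will come from a standard countable-stability argument together with the fact that $t$ lies outside the compact Cantor set $E$.

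First, since $t\in[0,1]\setminus E$ and $E$ is compact with $\{0,1\}\subset E$, I fix $\ep>0$ with $[t-\ep,t+\ep]\cap E=\emptyset$, so that for any $y\in E$ the inequality $y\ge t-\ep$ already forces $y\ge \pi(\al(t))$; equivalently, if $(c_i)\in\{0,\dots,N-1\}^\N$ satisfies $\pi((c_i))\ge t-\ep$, then $(c_i)\lge \al(t)$. Now take $x=\pi(d_1d_2\ldots)\in E_\eta(t)$. Then $\eta(x)=\liminf_{n\to\f}\eta_n(x)\ge t$, so there exists $M\in\N$ with $\eta_n(x)\ge t-\ep$ for every $n\ge M$. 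For each such $n$ we have
\[
\max\{\pi(d_{n+1}d_{n+2}\ldots),\,\pi(\overline{d_{n+1}d_{n+2}\ldots})\}=\max\{T^n x,\,1-T^n x\}=\eta_n(x)\ge t-\ep.
\]
By the choice of $\ep$ and the strict monotonicity of $\pi$, this yields either $d_{n+1}d_{n+2}\ldots\lge \al(t)$ or $\overline{d_{n+1}d_{n+2}\ldots}\lge \al(t)$, i.e.\ $d_{n+1}d_{n+2}\ldots\lle \overline{\al(t)}$. Therefore $d_{M+1}d_{M+2}\ldots\in E_\eta'(t)$, and hence $x\in f_{d_1\ldots d_M}(\pi(E_\eta'(t)))$.

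Letting $M$ range and collecting all such prefixes, I obtain the covering
\[
E_\eta(t)\subset\bigcup_{k=0}^\f\bigcup_{\mathbf i\in\{0,1,\dots,N-1\}^k}f_{\mathbf i}(\pi(E_\eta'(t))).
\]
Since each $f_{\mathbf i}$ is a similitude, this countable union has Hausdorff dimension equal to $\dim_H\pi(E_\eta'(t))$, giving the inequality $\dim_H E_\eta(t)\le \dim_H\pi(E_\eta'(t))$.

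For the reverse inclusion, take $x=\pi(d_1d_2\ldots)\in\pi(E_\eta'(t))$. For every $n\ge 0$, the defining condition gives $d_{n+1}d_{n+2}\ldots \lge \al(t)$ or $d_{n+1}d_{n+2}\ldots \lle \overline{\al(t)}$; applying the strictly increasing map $\pi$ we deduce $T^n x\ge \pi(\al(t))\ge t$ or $1-T^n x\ge \pi(\al(t))\ge t$, in either case $\eta_n(x)\ge t$. Taking liminf yields $\eta(x)\ge t$, so $\pi(E_\eta'(t))\subset E_\eta(t)$ and consequently $\dim_H\pi(E_\eta'(t))\le\dim_H E_\eta(t)$. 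Combining the two bounds proves the proposition. The only non-routine ingredient is the use of $t\notin E$ to create the spectral gap $[t-\ep,t+\ep]\cap E=\emptyset$ that converts the limiting inequality $\eta_n(x)\ge t-\ep$ into an honest symbolic comparison with $\al(t)$; everything else is purely bookkeeping and parallels the argument already carried out for $E_\ga(t)$.
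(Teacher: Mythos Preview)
Your proof is correct and follows essentially the same approach as the paper's own proof: fix $\ep>0$ with $[t-\ep,t+\ep]\cap E=\emptyset$, use the liminf condition to obtain a tail in $E_\eta'(t)$ and cover $E_\eta(t)$ by countably many similitude images of $\pi(E_\eta'(t))$, and conversely verify the direct inclusion $\pi(E_\eta'(t))\subset E_\eta(t)$. The only difference is that you spell out the countable covering and the countable-stability step explicitly, whereas the paper leaves these implicit after establishing $d_{M+1}d_{M+2}\ldots\in E_\eta'(t)$.
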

\begin{proof}
The proof is similar to Proposition \ref{prop:dimension}. Let $t\in[0, 1]\setminus E$. Choose $\ep>0$ such that $[t-\ep, t+\ep]\cap E=\emptyset$. Take $x=\pi(d_1d_2\ldots)\in E_\eta(t)$. Then $\eta(x)=\liminf_{n\to\f}\max\set{T^n x, 1-T^n x}\ge t$. So there exists a large integer $M$ such that
\[
\max\set{\pi(d_{n+1}d_{n+2}\ldots), \pi(\overline{d_{n+1}d_{n+2}\ldots})}=\max\set{T^n x, 1-T^n x}\ge t-\ep\quad \forall ~n\ge M.
\]
This implies that
\[
\max\set{\pi(d_{n+1}d_{n+2}\ldots), \pi(\overline{d_{n+1}d_{n+2}\ldots})}\ge \pi(\al(t-\ep))=\pi(\al(t))\quad\forall n\ge M.
\]
Thus,
\[
d_{n+1}d_{n+2}\ldots\lge \al(t)\quad\textrm{or}\quad d_{n+1}d_{n+2}\ldots\lle\overline{\al(t)}\quad\forall n\ge M,
\]
implying $d_{M+1}d_{M+2}\ldots \in E_\eta'(t)$. So $\dim_H E_\eta(t)\le \dim_H \pi(E_\eta'(t))$.

On the other hand, for any $x=\pi(d_1d_2\ldots)\in \pi(E_\eta'(t))$ one can verify that
\[
\max\set{T^n x, 1-T^n x}\ge \pi(\al(t))\ge t\quad\forall ~n\ge 0.
\]
This implies $\eta(x)\ge t$, and thus $x\in E_\eta(t)$. Hence, $\dim_H E_\eta(t)= \dim_H\pi(E_\eta'(t))$.
\end{proof}

Recall from Definition \ref{def:sequences} that the sequence $(\theta_i)_{i=1}^\f\in\set{0, N-1}^\N$ satisfies
\[
\theta_1=N-1,\quad\textrm{and}\quad \theta_{2^n+1}\ldots \theta_{2^{n+1}}=\overline{\theta_{1}\ldots \theta_{2^n}}\quad\forall n\ge 0.
\]
Then the sequence $(\theta_i)$ begins with
$(N-1)00(N-1)\, 0(N-1)(N-1)0\ldots.$
We will show  that the critical value of $E_\eta$ is given by
\[t_\eta:=\pi((\theta_i))=R\sum_{i=1}^\f\theta_i\rho^{i-1}.\]
First we need the following property of the sequence $(\theta_i)$.
\begin{lemma}\label{lem:property-th}
For any $n\in\N$ we have
\[
\theta_2\ldots\theta_{2^n-i+1}\lle \theta_{i+2}\ldots \theta_{2^n+1}\prec\overline{\theta_2\ldots \theta_{2^n-i+1}}\quad\forall ~ 0\le i<2^n.
\]
\end{lemma}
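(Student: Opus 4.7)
We prove the claim by induction on $n$, in parallel with the proof of Lemma \ref{lem:prop-lambda}(i) for $(\la_i)$. The base case $n=1$ is a direct check for $i=0,1$ using $\theta_2=\theta_3=0$ and $\overline{\theta_2}=N-1$. For the inductive step, the two key consequences of Definition \ref{def:sequences} are $\theta_{2^n+k}=\overline{\theta_k}$ for $1\le k\le 2^n$ and $\theta_{2^{n+1}+1}=\overline{\theta_1}=0$, and the range $0\le i<2^{n+1}$ splits naturally into $0\le i<2^n$ (Case A) and $2^n\le i<2^{n+1}$ (Case B).

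In Case A, I would split each of the words $\theta_2\ldots\theta_{2^{n+1}-i+1}$ and $\theta_{i+2}\ldots\theta_{2^{n+1}+1}$ at the boundary index $2^n+1$. Applying the two defining relations to the trailing halves rewrites the first word as $\theta_2\ldots\theta_{2^n+1}\cdot\overline{\theta_2\ldots\theta_{2^n-i+1}}$ and the second as $\theta_{i+2}\ldots\theta_{2^n+1}\cdot\psi_n$, where $\psi_n:=\overline{\theta_2\ldots\theta_{2^n}\theta_1}$. The inductive hypothesis on $\theta_2\ldots\theta_{2^n-i+1}$ versus $\theta_{i+2}\ldots\theta_{2^n+1}$ controls the first $2^n-i$ characters of the comparison, while the observation that $\psi_n$ and $\overline{\theta_2\ldots\theta_{2^n+1}}$ agree except in the final character (with $\overline{\theta_1}=0<N-1=\theta_1$) handles the second half and delivers the right-hand strict inequality.

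In Case B, writing $i=2^n+j$ with $0\le j<2^n$ and applying the two defining relations, the middle word rewrites as $\overline{\theta_{j+2}\ldots\theta_{2^n}\theta_1}$. Reflecting all three words reduces the desired bound to showing $\theta_2\ldots\theta_{2^n-j+1}\prec\theta_{j+2}\ldots\theta_{2^n}\theta_1\preceq\overline{\theta_2\ldots\theta_{2^n-j+1}}$, and this middle $\theta_{j+2}\ldots\theta_{2^n}\theta_1$ differs from the induction middle $\theta_{j+2}\ldots\theta_{2^n+1}$ (supplied by the IH at level $n$ with parameter $j$) only in its last character ($\theta_1=N-1$ versus $\theta_{2^n+1}=0$). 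A short case split on where the IH witnesses its strict inequality then closes the argument.

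The main difficulty will be the equality case of the IH in Case A: when $\theta_2\ldots\theta_{2^n-i+1}=\theta_{i+2}\ldots\theta_{2^n+1}$, the first $2^n-i$ positions fail to resolve the comparison, forcing one to examine the next $i$ characters. This step requires propagating the induced period-$i$ relation further along $(\theta_k)$ and exploiting the structural asymmetry $\theta_{2^n+1}=0\ne N-1=\theta_1$ at the position where the reflection block $\overline{\theta_1\ldots\theta_{2^n}}$ begins, in order to break the tie strictly on the correct side.
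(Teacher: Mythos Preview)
Your approach is genuinely different from the paper's. The paper does not carry out an induction at all: it simply observes that $\theta_i=(N-1)(1-\tau_{i-1})$, where $(\tau_i)_{i\ge 0}$ is the classical Thue--Morse sequence, and then quotes the known extremality inequality for $(\tau_i)$ from Komornik--Loreti, which translates verbatim into the statement of the lemma. That route is two lines long and offloads all the combinatorics to the literature; yours is self-contained but considerably longer.

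Your inductive scheme is sound, and the case split and the rewriting of the words via $\theta_{2^n+k}=\overline{\theta_k}$ and $\theta_{2^{n+1}+1}=0$ are correct. Case~B works exactly as you outline: the short final case split (on whether the witnessing position of the IH strict inequality is the last one or an earlier one) closes cleanly because $(\theta_i)\in\{0,N-1\}^\N$ rules out the problematic subcase. For Case~A, your description of the ``main difficulty'' is accurate but your proposed resolution (propagating a period-$i$ relation) is vaguer than it needs to be. The clean way through is a second application of the induction hypothesis with the complementary parameter $i'=2^n-i$: when $\theta_2\ldots\theta_{2^n-i+1}=\theta_{i+2}\ldots\theta_{2^n+1}$, the next block to compare is $\theta_{2^n-i+2}\ldots\theta_{2^n+1}$ against $\overline{\theta_2\ldots\theta_{i+1}}$, and the right-hand strict inequality of the IH at parameter $i'$ gives exactly $\theta_{2^n-i+2}\ldots\theta_{2^n+1}\prec\overline{\theta_2\ldots\theta_{i+1}}$, finishing the $\lle$ side. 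With that observation in place your plan goes through.
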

\begin{proof}
Recall from \cite{Allouche_Shallit_1999} that  the classical Thue-Morse sequence $(\tau_i)_{i=0}^\f$ is defined as follows. Set $\tau_0=0$, and if $\tau_0\ldots \tau_{2^n-1}$ is defined for some $n\ge 0$, then we set $\tau_{2^n}\ldots \tau_{2^{n+1}-1}=(1-\tau_0)(1-\tau_1)\ldots (1-\tau_{2^n-1})$. Comparing with  Definition \ref{def:sequences} it follows that  the sequence $(\theta_i)_{i=1}^\f$ is a variation of $(\tau_i)_{i=0}^\f$, i.e.,
\[\theta_i=(N-1)(1-\tau_{i-1})\quad \textrm{for all }i\ge 1.\]
 Therefore, the lemma follows from   the property of $(\tau_i)$ (cf.~\cite{Komornik-Loreti-1998}) that for each $n\in\N$,
\[
(1-\tau_1)\ldots(1-\tau_{2^{n}-i})\lle (1-\tau_{i+1})\ldots (1-\tau_{2^n})\prec \tau_1\ldots \tau_{2^n-i}\quad  \forall ~0\le i<2^n.
\]
\end{proof}

Now we show that $t_\eta$ is indeed  the critical value of $E_\eta(t)$.
 \begin{lemma}\label{lem:eta-lower-bound}
If $t<t_\eta$, then   $\dim_H E_\eta(t)>0$.
 \end{lemma}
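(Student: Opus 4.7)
The proof would mirror the strategy of Lemma \ref{lem:lower-bound-gamma-2}. Set $s_n:=\pi(\theta_1\theta_2\ldots\theta_{2^n}0^\infty)$. Since $\theta_{2^n+1}=\overline{\theta_1}=0$ but the tail $\theta_{2^n+1}\theta_{2^n+2}\ldots$ is not identically $0^\infty$, one has $\theta_1\ldots\theta_{2^n}0^\infty\prec(\theta_i)$, so $s_n<t_\eta$; moreover $s_n\nearrow t_\eta$ as $n\to\infty$. For $t<t_\eta$ pick $n$ with $s_n>t$. Since the set-valued map $t\mapsto E_\eta(t)$ is non-increasing, $E_\eta(s_n)\subseteq E_\eta(t)$; after perturbing $s_n$ into a component of $[0,1]\setminus E$ if necessary (observing that $\alpha(t)$ and hence $E_\eta'(t)$ are constant on each such component), Proposition \ref{prop:dimension-1} reduces the problem to showing $\dim_H\pi(E_\eta'(s_n))>0$.

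To produce positive dimension, I would set $\xi:=\theta_1\ldots\theta_{2^n}$ with reflection $\overline{\xi}$. Since $\alpha(s_n)=\xi 0^\infty$, membership $(d_i)\in E_\eta'(s_n)$ is equivalent to: for every $k\ge 0$,
\[
d_{k+1}\ldots d_{k+2^n}\lge\xi\quad\text{or}\quad d_{k+1}\ldots d_{k+2^n}\lle\overline{\xi},
\]
where equality with $\xi$ only requires the remaining tail to be $\lge 0^\infty$ (automatic) and similarly for $\overline{\xi}$. The key claim is then that every infinite concatenation of the two length-$2^n$ blocks $\xi$ and $\overline{\xi}$ lies in $E_\eta'(s_n)$. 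This will be verified by a case analysis on the position $i\in\{0,1,\ldots,2^n-1\}$ of the shift inside a block, together with the identity (either $\xi$ or $\overline{\xi}$) of the subsequent block. The window of length $2^n$ starting at such a shifted position takes one of the four forms $\theta_{i+1}\ldots\theta_{2^n}\theta_1\ldots\theta_i$, $\theta_{i+1}\ldots\theta_{2^n}\overline{\theta_1\ldots\theta_i}$, and their reflections. Lemma \ref{lem:property-th} and the Thue-Morse recursion $\theta_{2^n+1}\ldots\theta_{2^{n+1}}=\overline{\theta_1\ldots\theta_{2^n}}$ together imply that whenever such a window begins with $N-1$ it is $\lge\xi$, and whenever it begins with $0$ it is $\lle\overline{\xi}$.

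Granted the claim, $\pi(E_\eta'(s_n))$ contains the $\pi$-image of the full two-symbol shift on the alphabet $\{\xi,\overline{\xi}\}$, which is a self-similar set satisfying the strong separation condition with two maps of contraction ratio $\rho^{2^n}$, hence
\[
\dim_H E_\eta(t)\ge\dim_H\pi(E_\eta'(s_n))\ge\frac{\log 2}{-2^n\log\rho}>0.
\]
The main technical obstacle is the admissibility case analysis: although each required comparison follows from Lemma \ref{lem:property-th} and the Thue-Morse structure, one must handle the four junction patterns $\xi\xi,\xi\overline{\xi},\overline{\xi}\xi,\overline{\xi}\overline{\xi}$ carefully, and in each one track the length-$2^n$ window as the shift index $i$ varies. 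If the full two-symbol shift turns out not to be entirely admissible, the same scheme still yields a non-trivial subshift of finite type (analogous to the four-state graph in Lemma \ref{lem:lower-bound-gamma-2}) of positive topological entropy, which suffices for the conclusion.
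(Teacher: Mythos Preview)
Your proposal is correct and follows essentially the same approach as the paper: define $s_n=\pi(\theta_1\ldots\theta_{2^n}0^\infty)\nearrow t_\eta$, reduce via Proposition \ref{prop:dimension-1} to showing $\dim_H\pi(E_\eta'(s_n))>0$, and prove that the full two-block shift $\{\theta_1\ldots\theta_{2^n},\overline{\theta_1\ldots\theta_{2^n}}\}^\N$ lies in $E_\eta'(s_n)$ by a case analysis based on Lemma \ref{lem:property-th}. The paper confirms that the full shift (not merely a subshift of finite type) is admissible, so your fallback is unnecessary; your observation that $s_n\in E$ requires a small perturbation (or the direct inclusion $\pi(E_\eta'(t))\subseteq E_\eta(t)$, which holds for all $t$) is a detail the paper glosses over.
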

 \begin{proof}
 Let $s_n:=\pi(\theta_1\ldots \theta_{2^n} 0^\f)$ with $n\in\N$. Then $s_n\nearrow  t_\eta$ as $n\to\f$. Since the map $t\mapsto E_\eta(t)$ is non-increasing, by Proposition \ref{prop:dimension-1} it suffices to prove $\dim_H \pi(E'_\eta(s_n))>0$ for any $n\in\N$. We   do this now by showing that
 \begin{equation}\label{eq:mar-31-1}
 \set{\theta_1\ldots \theta_{2^n}, \overline{\theta_1\ldots\theta_{2^n}}}^\N\subseteq E_\eta'(s_n)\quad\forall~ n\in\N.
 \end{equation}

 Fix $n\in\N$. Note that $\al(s_n)=\theta_1\ldots\theta_{2^n} 0^\f$ begins with digit $N-1$. By the definition of $E_\eta'(s_n)$ it follows that $E_\eta'(s_n)\subseteq\set{0, N-1}^\N$.
Since   $E_\eta'(s_n)$ is symmetric, to prove (\ref{eq:mar-31-1}) it suffices to prove that the words $\theta_1\ldots \theta_{2^n}\theta_1\ldots \theta_{2^n}$ and $\theta_1\ldots \theta_{2^n}\overline{\theta_1\ldots \theta_{2^n}}$ are both admissible in $E_\eta'(s_n)$. In other words, we only need to  prove for any $0\le i<2^n$ that
 \begin{equation}\label{eq:mar-31-2}
 \begin{split}
 \theta_{i+1}\ldots \theta_{2^n}\theta_1\ldots \theta_i\lle\overline{\theta_1\ldots \theta_{2^n}}\quad\textrm{or}\quad  \theta_{i+1}\ldots \theta_{2^n}\theta_1\ldots \theta_i\lge \theta_1\ldots \theta_{2^n},
 \end{split}
 \end{equation}
 and
  \begin{equation}\label{eq:mar-31-3}
 \begin{split}
 \theta_{i+1}\ldots \theta_{2^n}\overline{\theta_1\ldots \theta_i}\lle\overline{\theta_1\ldots \theta_{2^n}}\quad\textrm{or}\quad  \theta_{i+1}\ldots \theta_{2^n}\overline{\theta_1\ldots \theta_i}\lge \theta_1\ldots \theta_{2^n}.
 \end{split}
 \end{equation}
 Note that $\theta_{i+1}\in\set{0, N-1}$. We will first  prove (\ref{eq:mar-31-2}) by  considering the following two cases.

Case I.  If $\theta_{i+1}=0$, then by Lemma \ref{lem:property-th} it follows that
\begin{equation}\label{eq:apr-14-1}
\theta_{i+2}\ldots \theta_{2^n+1}\prec \overline{\theta_2\ldots \theta_{2^n-i+1}}.
\end{equation}
Note that $\theta_{i+1}=\overline{\theta_1}$. If $\theta_{i+2}\ldots \theta_{2^n}\prec \overline{\theta_2\ldots \theta_{2^n-i}}$, then  $\theta_{i+1}\ldots \theta_{2^n}\prec \overline{\theta_1\ldots \theta_{2^n-i}}$, and we are done. Otherwise, suppose $\theta_{i+2}\ldots \theta_{2^n}=\overline{\theta_2\ldots\theta_{2^n-i}}$. Then by (\ref{eq:apr-14-1}) and $(\theta_i)\in\set{0,N-1}^\N$ it follows that  $\overline{\theta_{2^n-i+1}}=N-1=\theta_1$. Again by Lemma \ref{lem:property-th} it follows that $\theta_2\ldots \theta_i\lle \overline{\theta_{2^n-i+2}\ldots \theta_{2^n}}$. So,
\[
\theta_{i+1}\ldots\theta_{2^n}\theta_1\ldots\theta_i\lle \overline{\theta_1\ldots \theta_{2^n}},
\]
proving  (\ref{eq:mar-31-2}).

Case II. If $\theta_{i+1}=N-1$, then by Lemma \ref{lem:property-th} and using $\theta_1=N-1>0=\theta_{2^n+1}$ it follows that
 \[
\theta_{i+2}\ldots \theta_{2^n}\theta_1\succ \theta_{i+2}\ldots \theta_{2^n+1}\lge \theta_2\ldots \theta_{2^n-i+1}.
\]
Since $\theta_{i+1}=\theta_1$, this again proves (\ref{eq:mar-31-2}).

Therefore, by Cases I and II we establish (\ref{eq:mar-31-2}).
Similarly, we can prove  (\ref{eq:mar-31-3}). If $\theta_{i+1}=0$, then by Lemma \ref{lem:property-th} it follows that
\[\theta_{i+2}\ldots \theta_{2^n}\overline{\theta_1}=\theta_{i+2}\ldots \theta_{2^n+1}\prec \overline{\theta_2\ldots \theta_{2^n-i+1}},\] proving (\ref{eq:mar-31-3}).
If $\theta_{i+1}=N-1$, then by Lemma \ref{lem:property-th} we can deduce that
\[
\theta_{i+1}\ldots \theta_{2^n}\overline{\theta_1}=\theta_{i+1}\ldots\theta_{2^n+1}\lge \theta_2\ldots \theta_{2^n-i+1}\quad\textrm{and}\quad
\overline{\theta_2\ldots \theta_i}\lge \theta_{2^n-i+2}\ldots \theta_{2^n}.\]
This again proves (\ref{eq:mar-31-3}).
  \end{proof}
\begin{lemma}
\label{lem:eta-upper-bound}
If $t>t_\eta$, then  the set $E_\eta(t)$ is at most countable.
\end{lemma}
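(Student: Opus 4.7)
The plan mirrors Lemma \ref{lem:upper-bound-gamma-2}, replacing the sequence $(\la_i)$ by $(\theta_i)$. I would set
\[
t_n:=\pi(\theta_1\ldots\theta_{2^n}(N-1)^\f),\quad n\ge 1,
\]
so that $t_n\in E$, $\al(t_n)=\theta_1\ldots\theta_{2^n}(N-1)^\f$, and $t_n\searrow t_\eta$ as $n\to\f$ (since $\theta_{2^n+1}\ldots\theta_{2^{n+1}}=\overline{\theta_1\ldots\theta_{2^n}}$ begins with $\overline{\theta_1}=0<N-1$). The set-valued map $t\mapsto E_\eta'(t)$ is non-increasing, and arguing as in the opening paragraph of the proof of Proposition \ref{prop:dimension-1} one obtains the inclusion
\[
E_\eta(t)\subseteq\bigcup_{k\ge 0}\bigcup_{\mathbf i\in\{0,\ldots,N-1\}^k}f_{\mathbf i}\big(\pi(E_\eta'(t))\big).
\]
Hence it suffices to show $E_\eta'(t_n)$ is at most countable for every $n\ge 1$; for any $t>t_\eta$ one then picks $n$ with $t_n<t$ and concludes $E_\eta'(t)\subseteq E_\eta'(t_n)$.

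To prove countability of $E_\eta'(t_n)$ I would proceed by induction. For the base case $n=1$, with $\al(t_1)=(N-1)0(N-1)^\f$ and $\overline{\al(t_1)}=0(N-1)0^\f$, a direct position-by-position inspection of the dichotomy rules out digits in $\{1,\ldots,N-2\}$ and shows, via a short case analysis on the patterns $(N-1)0$ and $0(N-1)$, that $E_\eta'(t_1)$ reduces to the finite set $\{0^\f,(N-1)^\f\}$. For the induction step, assume $E_\eta'(t_n)$ is countable and take $(d_i)\in E_\eta'(t_{n+1})\setminus E_\eta'(t_n)$. Because $t_{n+1}<t_n$, there exists $k\ge 0$ at which the $t_n$-dichotomy fails but the $t_{n+1}$-dichotomy holds, forcing one of
\[
\al(t_{n+1})\lle d_{k+1}d_{k+2}\ldots\prec\al(t_n)\quad\textrm{or}\quad\overline{\al(t_n)}\prec d_{k+1}d_{k+2}\ldots\lge\overline{\al(t_{n+1})}.
\]

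In the first case, using $\al(t_n)=\theta_1\ldots\theta_{2^n}(N-1)^\f$ and $\al(t_{n+1})=\theta_1\ldots\theta_{2^n}\overline{\theta_1\ldots\theta_{2^n}}(N-1)^\f$, the common prefix forces $d_{k+1}\ldots d_{k+2^n}=\theta_1\ldots\theta_{2^n}$ together with the sandwich
\[
\overline{\theta_1\ldots\theta_{2^n}}(N-1)^\f\lle d_{k+2^n+1}d_{k+2^n+2}\ldots\prec(N-1)^\f.
\]
Applying the $E_\eta'(t_{n+1})$ dichotomy at position $k+2^n$ and combining with this sandwich, the branch incompatible with the upper bound $\prec(N-1)^\f$ is ruled out, and the recursion $\theta_{2^n+1}\ldots\theta_{2^{n+1}}=\overline{\theta_1\ldots\theta_{2^n}}$ pins down $d_{k+2^n+1}\ldots d_{k+2^{n+1}}$; feeding this back into the lower bound then forces $d_{k+2^{n+1}+1}d_{k+2^{n+1}+2}\ldots=(N-1)^\f$. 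The second case is handled by the reflection symmetry of $E_\eta'$ and yields an eventual tail $0^\f$. Thus $E_\eta'(t_{n+1})\setminus E_\eta'(t_n)$ sits inside the countable set of sequences eventually equal to $0^\f$ or $(N-1)^\f$.

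The main obstacle, as in Lemma \ref{lem:upper-bound-gamma-2}, is the delicate lexicographic bookkeeping at positions $k$, $k+2^n$, and $k+2^{n+1}$, using the self-similar recursion for $(\theta_i)$ to match prefixes on the nose. A welcome simplification here, compared to the $(\la_i)$ case where tails of the form $(v_n\overline{v_n})^\f$ emerge, is that $(\theta_i)\in\{0,N-1\}^\f$; once the recursion kicks in only the two monochromatic tails $(N-1)^\f$ and $0^\f$ survive, and the countability conclusion follows immediately.
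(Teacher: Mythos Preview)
Your overall strategy---choosing approximants $t_n\searrow t_\eta$ and proving countability of each $E_\eta'(t_n)$ by induction---is the same as the paper's, and your reduction of $E_\eta(t)$ to a countable union of copies of $\pi(E_\eta'(t))$ is correct. Your base case is also fine: with $\al(t_1)=(N-1)0(N-1)^\f$ one indeed gets $E_\eta'(t_1)=\{0^\f,(N-1)^\f\}$.

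The gap is in the induction step, specifically the assertion that the only surviving tails are monochromatic. This is false for your choice of $t_n$. Take $n=1$, so $\al(t_1)=(N-1)0(N-1)^\f$ and $\al(t_2)=(N-1)00(N-1)(N-1)^\f$. The periodic sequence $((N-1)0)^\f$ lies in $E_\eta'(t_2)\setminus E_\eta'(t_1)$: at even positions it satisfies the upper branch (since $(N-1)0(N-1)0\ldots\succ(N-1)00(N-1)\ldots$), at odd positions the lower branch, while at position $0$ the $t_1$-dichotomy fails. This sequence has no monochromatic tail. The flaw in your bookkeeping is the step ``pins down $d_{k+2^n+1}\ldots d_{k+2^{n+1}}$'': after ruling out the lower branch at position $k+2^n$ (which does follow, from the \emph{lower} sandwich bound, not the upper one), the remaining constraint $\al(t_{n+1})\lle d_{k+2^n+1}d_{k+2^n+2}\ldots\prec(N-1)^\f$ only forces $d_{k+2^n+1}=N-1=\theta_1$, not the full block $\overline{\theta_1\ldots\theta_{2^n}}$.

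The paper avoids this by taking the \emph{periodic} approximants $t_n=\pi((\theta_1\ldots\theta_{2^n})^\f)$ instead of your $(N-1)^\f$-tailed ones. With that choice the sandwich at position $k$ reads $(\theta_1\ldots\theta_{2^n}\overline{\theta_1\ldots\theta_{2^n}})^\f\lle d_{k+1}d_{k+2}\ldots\prec(\theta_1\ldots\theta_{2^n})^\f$, and combining the dichotomy at positions $k+(m-1)2^n$ and $k+m2^n$ one shows that $(d_i)$ must end with $(\theta_1\ldots\theta_{2^n}\overline{\theta_1\ldots\theta_{2^n}})^\f$---a single periodic tail, but not a monochromatic one. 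So the ``welcome simplification'' you anticipate does not occur; the periodic-tail mechanism from Lemma~\ref{lem:upper-bound-gamma-2} carries over essentially unchanged.
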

\begin{proof}
Let $t_n=\pi((\theta_1\ldots \theta_{2^n})^\f)$. Note by Definition \ref{def:sequences} that
\[
\theta_1\ldots \theta_{2^{n+1}}=\theta_1\ldots \theta_{2^n}\overline{\theta_1\ldots \theta_{2^n}}^+\prec (\theta_1\ldots \theta_{2^n})^2
\]
for any $n\ge 0$. This implies  that $t_n\searrow t_\eta$ as $n\to\f$. Observe  that  the set-valued map $t\mapsto E'_\eta(t)$ is non-increasing, and  by the proof of Proposition \ref{prop:dimension-1}  that $E_\eta(t)$ is a countable union of scaling copies of $\pi(E_\eta'(t))$.  It suffices to prove that $E'_\eta(t_n)$ is countable for any $n\ge 0$. Clearly, for $n=0$ we have $t_0=\pi((N-1)^\f)$. Then one can easily verify that any sequence in $E_\eta'(t_0)$ must end with $0^\f$ or $(N-1)^\f$, and thus $E_\eta'(t_0)$ is countable. Furthermore, note that  $E_\eta'(t_{n+1})\supseteq E_\eta'(t_n)$ for all $n\ge 0$. So, it suffices to prove that $E_\eta'(t_{n+1})\setminus E_\eta'(t_n)$ is countable for all $n\ge 0$.

Fix $n\ge 0$. Then
 \[
 \overline{\al(t_n)}\prec \overline{\al(t_{n+1})}\prec \al(t_{n+1})\prec \al(t_n).
 \]Take $(d_i)\in E_\eta'(t_{n+1})\setminus E_\eta'(t_n)$. By the definition of $E_\eta'(t)$ there must exist $k\ge 0$ such that
\begin{equation}\label{eq:apr-1-1}
(\overline{\theta_1\ldots \theta_{2^n}})^\f=\overline{\al(t_n)}\prec  d_{k+1}d_{k+2}\ldots \lle \overline{\al(t_{n+1})}=(\overline{\theta_1\ldots \theta_{2^n}}\theta_1\ldots\theta_{2^n})^\f,
\end{equation}
or
\begin{equation}\label{eq:apr-1-2}
(\theta_1\ldots \theta_{2^n}\overline{\theta_1\ldots \theta_{2^n}})^\f=\al(t_{n+1})\lle  d_{k+1}d_{k+2}\ldots \prec\al(t_n)=(\theta_1\ldots\theta_{2^n})^\f.
\end{equation}
We consider the following two cases.

Case I. If (\ref{eq:apr-1-1}) holds, then there exists $m\in\N$ such that
\begin{equation}\label{eq:apr-1-3}
d_{k+1}\ldots d_{k+m 2^n}=(\overline{\theta_1\ldots \theta_{2^n}})^m\quad\textrm{and}\quad d_{k+m 2^n+1}\ldots d_{k+(m+1)2^n}\succ \overline{\theta_1\ldots \theta_{2^n}}.
\end{equation}
This, together with  $(d_i)\in E_\eta'(t_{n+1})$, implies that
\begin{equation}\label{eq:apr-1-4}
d_{k+m 2^n+1}d_{k+m 2^n+2}\ldots \lge \al(t_{n+1})=(\theta_1\ldots \theta_{2^n}\overline{\theta_1\ldots \theta_{2^n}})^\f.
\end{equation}
Note   by (\ref{eq:apr-1-3}) that  $d_{k+(m-1)2^n+1}\ldots d_{k+m 2^n}=\overline{\theta_1\ldots \theta_{2^n}}$. Then by using $(d_i)\in E_\eta'(t_{n+1})$ it gives that
\begin{equation}\label{eq:may-7-1}
d_{k+(m-1)2^n+1}d_{k+(m-1)2^n+2}\ldots\lle\overline{\al(t_{n+1})}=(\overline{\theta_1\ldots\theta_{2^n}}\theta_1\ldots \theta_{2^n})^\f.
\end{equation}
Hence, by  (\ref{eq:apr-1-3})--(\ref{eq:may-7-1}) we conclude   that
\[
d_{k+1}d_{k+2}\ldots =(\overline{\theta_1\ldots\theta_{2^n}})^m(\theta_1\ldots \theta_{2^n}\overline{\theta_1\ldots \theta_{2^n}})^\f.
\]

Case II. If (\ref{eq:apr-1-2}) holds, then by the same argument as in Case I one can show that $(d_i)$   ends with $(\theta_1\ldots\theta_{2^n}\overline{\theta_1\ldots \theta_{2^n}})^\f$.

Therefore, by Cases I and II it follows that $E_{\eta}'(t_{n+1})\setminus E_{\eta}'(t_n)$ is countable for any $n\ge 0$.
\end{proof}

\begin{proof}[Proof of Theorem \ref{main:critical-values} (B)]
Observe by Theorem \ref{main:densities} (ii) that
\begin{equation}\label{eq:apr-26-1}
E_\eta(t)=\set{x\in E: \eta(x)\ge t}\supseteq \set{x\in E: \Theta^{*s}(\mu, x)\le \frac{1}{(2t)^s}}=E^*\left(\frac{1}{(2t)^s}\right).
\end{equation}
Recall that  $t_\eta=\pi((\theta_i))$. So it suffices to prove that the critical value $b_c$ of $E^*(b)$ is given by $(2 t_\eta)^{-s}$.
First,
by Lemma  \ref{lem:eta-upper-bound} and (\ref{eq:apr-26-1}) it follows that   $E^*(b)$ is at most countable for any   $b<(2t_\eta)^{-s}$. This implies $b_c\ge (2t_\eta)^{-s}$.

Next, by Lemma  \ref{lem:eta-lower-bound} it follows that for $b>(2t_\eta)^{-s}$ we have $b^{-1/s}/2<t_\eta$, and thus
\begin{equation}\label{eq:apr-26-2}
\Lambda_n:=\pi\left(\set{\theta_1\ldots \theta_{2^n}, \overline{\theta_1\ldots \theta_{2^n}}}^\N\right)\subseteq E_\eta(b^{-1/s}/2)
\end{equation}
for sufficiently  large integer $n$. Furthermore, $\dim_H \Lambda_n>0$. Since each $x\in \Lambda_n$ has a unique coding in $\set{0, N-1}^\N$, by Theorem \ref{main:densities} (ii) and (\ref{eq:apr-26-2}) it follows that
\[
\Theta^{*s}(\mu, x)=\frac{1}{2^s\eta(x)^s}\le b\quad\forall~ x\in \Lambda_n.
\]
This  implies that
\[
\Lambda_n \subseteq\set{x\in E: \Theta^{*s}(\mu, x)\ge b}=E^*(b),
\]
and thus, $\dim_H E^*(b)>0$ for any $b>(2t_\eta)^{-s}$. This proves    $b_c=(2t_\eta)^{-s}$.

Finally we consider $E^*(b)$ for $b=b_c=(2 t_\eta)^{-s}$.
 By the proof of Lemma \ref{lem:eta-upper-bound} one can show that $E_\eta'(t_\eta)$ contains all sequences of the form
\[
 (\theta_1\overline{\theta_1})^{k_0}\cdots(\theta_1\ldots\theta_{2^n}\overline{\theta_1\ldots\theta_{2^n}})^{k_n}\cdots\quad \textrm{with}\quad k_n\in \set{0,1,\ldots}\cup\set{\f},
\]
and their reflections. Since these sequences are all in $\set{0, N-1}^\N$, by Theorem \ref{main:densities} (ii) it follows that these sequences also belong to $\pi^{-1}(E^*(b_c))$. So, $E^*(b_c)$ is uncountable, completing  the proof.
\end{proof}

\section*{Acknowlegements}
The first author was supported by  NSFC No.~11971079 and   the Fundamental
and Frontier Research Project of Chongqing No.~cstc2019jcyj-msxmX0338. The second author was supported by NSFC No.~11671147, 11571144 and Science and Technology Commission of Shanghai Municipality (STCSM) No.~13dz2260400.


\begin{thebibliography}{10}

\bibitem{Allouche_Shallit_1999}
J.-P. Allouche and J.~Shallit.
\newblock The ubiquitous {P}rouhet-{T}hue-{M}orse sequence.
\newblock In {\em Sequences and their applications ({S}ingapore, 1998)},
  Springer Ser. Discrete Math. Theor. Comput. Sci., pages 1--16. Springer,
  London, 1999.

\bibitem{Alloche_Shallit_2003}
J.-P. Allouche and J.~Shallit.
\newblock {\em Automatic sequences}.
\newblock Cambridge University Press, Cambridge, 2003.
\newblock Theory, applications, generalizations.

\bibitem{Ayer-Strichartz-1999}
E.~Ayer and R.~S. Strichartz.
\newblock Exact {H}ausdorff measure and intervals of maximum density for
  {C}antor sets.
\newblock {\em Trans. Amer. Math. Soc.}, 351(9):3725--3741, 1999.

\bibitem{Bedford-Fisher-1992}
T.~Bedford and A.~M. Fisher.
\newblock Analogues of the {L}ebesgue density theorem for fractal sets of reals
  and integers.
\newblock {\em Proc. London Math. Soc. (3)}, 64(1):95--124, 1992.

\bibitem{Dai-Tang-2012}
M.~Dai and L.~Tang.
\newblock The pointwise densities of 3-part {C}antor measure.
\newblock {\em Math. Methods Appl. Sci.}, 35(17):2000--2016, 2012.

\bibitem{Falconer_1990}
K.~Falconer.
\newblock {\em Fractal geometry}.
\newblock John Wiley \& Sons Ltd., Chichester, 1990.
\newblock Mathematical foundations and applications.

\bibitem{Falconer-1992}
K.~J. Falconer.
\newblock Wavelet transforms and order-two densities of fractals.
\newblock {\em J. Statist. Phys.}, 67(3-4):781--793, 1992.

\bibitem{Feng-Hua-Wen-2000}
D.-J. Feng, S.~Hua, and Z.-Y. Wen.
\newblock The pointwise densities of the {C}antor measure.
\newblock {\em J. Math. Anal. Appl.}, 250(2):692--705, 2000.

\bibitem{Glendinning_Sidorov_2001}
P.~Glendinning and N.~Sidorov.
\newblock Unique representations of real numbers in non-integer bases.
\newblock {\em Math. Res. Lett.}, 8:535--543, 2001.

\bibitem{Hutchinson_1981}
J.~E. Hutchinson.
\newblock Fractals and self-similarity.
\newblock {\em Indiana Univ. Math. J.}, 30(5):713--747, 1981.

\bibitem{Kalle-Kong-Langeveld-Li-18}
C.~Kalle, D.~Kong, N.~Langeveld, and W.~Li.
\newblock The $\beta$-transformation with a hole at 0.
\newblock {\em arXiv:1803.07338. To appear in Ergodic Theory and Dynamical
  Systems. DOI:10.1017/etds.2019.12, 2019.}

\bibitem{Komornik-Loreti-1998}
V.~Komornik and P.~Loreti.
\newblock Unique developments in non-integer bases.
\newblock {\em Amer. Math. Monthly}, 105(7):636--639, 1998.

\bibitem{Kong_Li_Dekking_2010}
D.~Kong, W.~Li, and F.~M. Dekking.
\newblock Intersections of homogeneous {C}antor sets and beta-expansions.
\newblock {\em Nonlinearity}, 23(11):2815--2834, 2010.

\bibitem{Li-Yao-2008}
W.~Li and Y.~Yao.
\newblock The pointwise densities of non-symmetric {C}antor sets.
\newblock {\em Internat. J. Math.}, 19(9):1121--1135, 2008.

\bibitem{Lind_Marcus_1995}
D.~Lind and B.~Marcus.
\newblock {\em An introduction to symbolic dynamics and coding}.
\newblock Cambridge University Press, Cambridge, 1995.

\bibitem{Mattila_1995}
P.~Mattila.
\newblock {\em Geometry of sets and measures in {E}uclidean spaces}, volume~44
  of {\em Cambridge Studies in Advanced Mathematics}.
\newblock Cambridge University Press, Cambridge, 1995.
\newblock Fractals and rectifiability.

\bibitem{Morters-Preiss-1998}
P.~M\"{o}rters and D.~Preiss.
\newblock Tangent measure distributions of fractal measures.
\newblock {\em Math. Ann.}, 312(1):53--93, 1998.

\bibitem{Olsen-2008}
L.~Olsen.
\newblock Density theorems for {H}ausdorff and packing measures of self-similar
  sets.
\newblock {\em Aequationes Math.}, 75(3):208--225, 2008.

\bibitem{Strichartz-Taylor-Zhang-1995}
R.~S. Strichartz, A.~Taylor, and T.~Zhang.
\newblock Densities of self-similar measures on the line.
\newblock {\em Experiment. Math.}, 4(2):101--128, 1995.

\bibitem{Wang-Wu-Xiong-2011}
J.~Wang, M.~Wu, and Y.~Xiong.
\newblock On the pointwise densities of the {C}antor measure.
\newblock {\em J. Math. Anal. Appl.}, 379(2):637--648, 2011.

\bibitem{Zhou-1998}
Z.~Zhou.
\newblock The {H}ausdorff measure of the self-similar sets---the {K}och curve.
\newblock {\em Sci. China Ser. A}, 41(7):723--728, 1998.

\end{thebibliography}
\end{document}